\date{}
\author{
Michael Krivelevich \thanks{ School of Mathematical Sciences,
Raymond and Beverly Sackler Faculty of Exact Sciences, Tel Aviv
University, Tel Aviv 69978, Israel. Email address:
krivelev@post.tau.ac.il. Research supported in part by USA-Israel
BSF Grant 2010115 and by grant 1063/08 from the Israel Science
Foundation. } \and Choongbum Lee \thanks{Department of Mathematics,
UCLA, Los Angeles, CA, 90095. Email: choongbum.lee@gmail.com.
Research supported in part by a Samsung Scholarship.} \and Benny
Sudakov
\thanks{Department of Mathematics, UCLA, Los Angeles, CA 90095.
Email: bsudakov@math.ucla.edu. Research supported in part by NSF
grant DMS-1101185, NSF CAREER award DMS-0812005 and by USA-Israeli
BSF grant. } }
\numberwithin{equation}{section}
\numberwithin{figure}{section}
\theoremstyle{plain}
\newtheorem{thm}{Theorem}[section]
  \theoremstyle{plain}
  \newtheorem{lem}[thm]{Lemma}
  \theoremstyle{definition}
  \newtheorem{defn}[thm]{Definition}
  \theoremstyle{plain}
  \newtheorem{prop}[thm]{Proposition}
  \theoremstyle{plain}
  \newtheorem{claim}[thm]{Claim}
  \theoremstyle{plain}
\newcommand{\BIPRE}{\mathcal{RE}_b}
\newcommand{\BFP}{\mathbf{P}}
\begin{document}

\title{Robust Hamiltonicity of Dirac graphs}

\maketitle

\begin{abstract}
A graph is Hamiltonian if it contains a cycle which passes through
every vertex of the graph exactly once. A classical theorem of Dirac
from 1952 asserts that every graph on $n$ vertices with minimum degree at
least $n/2$ is Hamiltonian. We refer to such graphs as Dirac graphs.
In this paper we extend Dirac's theorem in two directions and show
that Dirac graphs are robustly Hamiltonian in a very strong sense.
First, we consider a random subgraph of a Dirac graph obtained by taking each
edge independently with probability $p$, and prove that there exists
a constant $C$ such that if $p \ge C \log n / n$, then a.a.s.~the
resulting random subgraph is still Hamiltonian. Second, we prove
that if a $(1:b)$ Maker-Breaker game is played on a Dirac graph,
then Maker can construct a Hamiltonian subgraph as long as the bias $b$ is at most $cn /\log n$
for some absolute constant $c > 0$. Both of these results are
tight up to a constant factor, and are proved under one general
framework.
\end{abstract}

\section{Introduction} \label{section_intro}

A \emph{Hamilton cycle} of a graph is a cycle which passes through
every vertex of the graph exactly once, and a graph is
\emph{Hamiltonian} if it contains a Hamilton cycle. Hamiltonicity is
one of the most central notions in Graph Theory, and has been intensively
studied by numerous researchers. The problem of deciding
Hamiltonicity of a graph is one of the NP-complete problems that
Karp listed in his seminal paper \cite{Karp}, and accordingly, one
cannot hope for a simple classification of such graphs. Still, there
are many results deriving properties sufficient for Hamiltonicity.
For example, a classical result proved by
Dirac in 1952 (see, e.g., \cite[Theorem~10.1.1]{Diestel}), asserts that
every graph on $n$ vertices of minimum degree at least $\frac{n}{2}$
is Hamiltonian. In this context, we say that a graph is a {\em Dirac
graph} if it has minimum degree at least $\frac{n}{2}$. Dirac's
theorem is one of the most influential results in the study of
Hamiltonicity and by now there are many related known results (see, e.g.,
\cite{Bondy2}).

Let $G$ be a graph and $\mathcal{P}$ be a graph property. Many
results in Graph Theory state that ``under certain conditions, $G$
has property $\mathcal{P}$''. Once such a result is established, it
is natural to ask: ``How strongly does $G$ possess $\mathcal{P}$?''.
In other words, we want to determine the robustness of $G$ with
respect to $\mathcal{P}$. Recently, such questions were extensively
studied by many researchers.

In order to answer the question about robustness, we would need some
kind of a measure of this phenomenon. There are several measures of
robustness that were proposed so far. For example, one can measure
the robustness of Dirac graphs with respect to Hamiltonicity by
computing the number of Hamilton cycles that a Dirac graph must
contain. Indeed, confirming a conjecture of S\'ark\"ozy, Selkow, and
Szemer\'edi \cite{SaSeSz}, Cuckler and Kahn \cite{CuKa} proved that
every Dirac graph contains at least $n!/(2+o(1))^n$ Hamilton cycles.
Another measure is the so called {\em resilience}, whose systematic
study was initiated by Sudakov and Vu \cite{SuVu}, and has been
intensively studied afterwards, see, e.g., \cite{AlSu, BaCsSa,
BeKrSu, DeKoMaSt, KrLeSu} and their references 
(resilience is closely related to the notion of fault tolerance,
see, e.g., \cite{Aletal}). Roughly speaking,
for monotone increasing graph properties, these measures compute the
robustness in terms of the number of edges one must delete from $G$
locally or globally in order to destroy the property $\mathcal{P}$.
In this paper, we would like to revisit Dirac's theorem and study
different settings which can be used to demonstrate its robustness.
Our main results show how to strengthen Dirac's theorem in two ways.

\subsection{Random subgraph}

Let $G(n,p)$ be the binomial model of random graphs, which denotes
the probability space whose points are graphs with vertex set $[n] =
\{1,\ldots,n\}$ where each pair of vertices forms an edge randomly
and independently with probability $p$. We say that $G(n,p)$
possesses a graph property $\mathcal{P}$ \textit{asymptotically
almost surely}, or a.a.s.~for brevity, if the probability that
$G(n,p)$ possesses $\mathcal{P}$ tends to 1 as $n$ goes to infinity.
The earlier results on Hamiltonicity of random graphs were proved by
P\'osa \cite{Posa}, and Korshunov \cite{Korshunov}. Improving on
these results, Bollob\'as \cite{Bollobas}, and Koml\'os and
Szemer\'edi \cite{KoSz} independently proved that if $p\ge\frac{\log
n + \log \log n + \omega(n)}{n}$ for some function $\omega(n)$ that
goes to infinity as $n$ goes to infinity, then $G(n,p)$ is
a.a.s.~Hamiltonian. The range of $p$ cannot be improved, since if $p
\le \frac{\log n + \log \log n - \omega(n)}{n}$, then $G(n,p)$
a.a.s.~has a vertex of degree at most one.

An equivalent way of describing $G(n,p)$ is as the probability space
of graphs obtained by taking every edge of the complete graph $K_n$
independently with probability $p$. A variety of questions can be
asked when we consider a host graph $G$ other than $K_n$, and
consider the probability space of graphs obtained by taking every
edge of it independently with probability $p$ (we denote this
probability space as $G_p$).

The following question can be placed in this context and can be also
viewed as an attempt to understand the robustness of Dirac's
theorem. Let $G$ be a graph of minimum degree at least $\frac{n}{2}$
and note that $G$ is Hamiltonian by Dirac's theorem. Since
Hamiltonicity is a monotone graph property, we know that there
exists a threshold $p_0$ (see \cite{BoTh}) such that if $p \gg p_0$,
then $G_p$ is a.a.s.~Hamiltonian, and if $p \ll p_0$, then $G_p$ is
a.a.s.~not Hamiltonian. For random graphs, the threshold for
Hamiltonicity is $p_0 = \frac{\log n}{n}$ (it is moreover a sharp
threshold). What is the Hamiltonicity threshold for $G_p$, in
particular, does $G_p$ stay Hamiltonian for $p \ll 1$? Our main
theorem provides an answer to this question.

\begin{thm} \label{thm_mainrandom}
There exists a positive constant $C$ such that for $p\ge\frac{C\log
n}{n}$ and a graph $G$ on $n$ vertices of minimum degree at least
$\frac{n}{2}$, the random subgraph $G_{p}$ is a.a.s. Hamiltonian.
\end{thm}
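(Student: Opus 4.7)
My plan is to adapt P\'osa's rotation-extension technique to this random subgraph setting, using the Dirac minimum-degree hypothesis on $G$ wherever classical proofs for $G(n,p)$ rely on the density of the complete graph.

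I would split $p$ as $p = 1 - (1-p_1)(1-p_2)$ with $p_1, p_2 = \Theta(\log n/n)$, realizing $G_p$ as the union of two independent random subgraphs $G_{p_1} \cup G_{p_2}$. The first round is used to establish expansion: I would show that a.a.s.\ $H := G_{p_1}$ satisfies $\delta(H) \ge c\log n$ (a direct Chernoff estimate, since each vertex has expected $H$-degree at least $\Omega(\log n)$ by $\delta(G) \ge n/2$) together with the P\'osa-type expansion that every set $S \subseteq V(G)$ with $|S| \le \alpha n$ has $|N_H(S)| \ge 2|S|$, for some absolute constant $\alpha > 0$. The key input here is that $\delta(G) \ge n/2$ forces $e_G(S, V\setminus S) \ge |S|(n/2 - |S|)$, which for $|S| \le n/4$ is $\Omega(|S| n)$ potential crossing edges; Chernoff plus a union bound over candidate pairs $(S,N)$ with $|N|<2|S|$ controls the failure probability.

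The second round serves as a reservoir of boosters. Conditioned on $H$ having the above expansion (which is preserved by adding edges), if $H \cup G_{p_2}$ were not Hamiltonian I would take a longest path $P$ and apply iterated P\'osa rotations from both endpoints: expansion yields two linear-sized sets $T_1, T_2 \subseteq V(G)$ such that adding any edge with one endpoint in $T_1$ and the other in the corresponding second-rotation set either extends $P$ or closes it into a Hamilton cycle, and such edges are \emph{boosters}. Decomposing $G_{p_2}$ as $n$ independent mini-rounds each of probability $q \approx p_2/n = \Theta(\log n/n^2)$, I would argue that in each mini-round an unexposed booster is revealed a.a.s., so that after $n$ extension steps the path has grown into a Hamilton cycle.

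The main obstacle is quantitative: only $G$-edges can serve as boosters, and in a worst-case Dirac graph -- for instance, two cliques of size $n/2$ joined by a perfect matching -- two disjoint linear-sized sets may share only $O(n)$ edges of $G$. With $p_2 = O(\log n/n)$ and only $O(n)$ boosters per mini-round, sprinkling succeeds per round only with probability $1 - n^{-O(1)}$, which is not small enough to union bound over $n$ extensions. The hard part of the proof will therefore be strengthening the booster lemma to produce $\omega(n^2/\log n)$ booster \emph{$G$-edges} at every stage, presumably by iterating the rotation from many longest paths simultaneously and double-counting the rotation-endpoint structure against the Dirac minimum degree. This is the step where the Dirac hypothesis (rather than mere expansion of $H$) truly enters, and where I expect the bulk of the technical work to lie.
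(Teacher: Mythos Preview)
You have correctly located the genuine difficulty: with only the naive P\'osa rotation, the endpoint sets $T_1,T_2$ are merely of linear size $\alpha n$, and a Dirac graph can have as few as $O(n)$ edges between two such sets (your two-cliques example is exactly the extremal picture), so sprinkling with $p_2=\Theta(\log n/n)$ does not survive the union bound. However, your proposed remedy---``iterating the rotation from many longest paths simultaneously and double-counting against the Dirac minimum degree''---is not a plan but a hope, and in fact no uniform booster bound of the form $\omega(n^2/\log n)$ holds for \emph{all} Dirac graphs: in the two-cliques-plus-matching graph the total number of crossing $G$-edges is $n/2$, so no amount of rotation can manufacture $\omega(n)$ crossing boosters.

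The paper resolves this via two ideas you are missing. First, a structural trichotomy (Lemma~\ref{lem:dirac}): every Dirac graph is either (i) \emph{robust}, meaning $e_G(A,B)\ge \alpha n^2$ for every pair of half-sets, or close to one of the two extremal configurations, namely (ii) two cliques or (iii) a complete bipartite graph. The near-extremal cases are handled by ad hoc structural arguments (Hamilton-connectivity of each clique plus two crossing edges; a bipartite rotation framework with a few special edges inside the larger side). Second, and this is the heart of the matter, in the robust case (i) the paper proves a much sharper rotation lemma (Lemma~\ref{lem_halfexpander} together with Claim~\ref{cla:plusandminus}): one can push the rotation endpoint sets $S_P,T_v$ all the way up to size $(\tfrac12+\varepsilon)n$, not merely $\alpha n$. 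The mechanism is a delicate argument showing that if $|S_P|<(\tfrac12+\varepsilon)n$ then $|S_P^+\triangle S_P^-|$ is small, which is then played off against the half-expander property to derive a contradiction. Once $|S_P|,|T_v|>n/2$, the Dirac condition $\delta(G)\ge n/2$ immediately gives $\Omega(n^2)$ booster $G$-edges, and your sprinkling argument goes through. Without the trichotomy and the $(\tfrac12+\varepsilon)n$ rotation lemma, the approach stalls at exactly the point you identified.
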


This theorem establishes the correct order of magnitude of the
threshold function since if $p \le (1+o(1))\frac{\log n}{n}$, then
the graph a.a.s.~has isolated vertices. Also, since there are graphs
with minimum degree $\frac{n}{2}-1$ which are not even connected,
the minimum degree condition cannot be improved. Moreover, our
theorem can actually be viewed as an extension of Dirac's theorem
since the case $p=1$ is equivalent to Dirac's theorem.

\subsection{Hamiltonicity game}

Let $V$ be a set of elements and $\mathcal{F} \subseteq 2^{V}$ be a
family of subsets of $V$. A {\em Maker-Breaker game} involves two
players, named Maker and Breaker respectively, who alternately
occupy the elements of $V$, the {\em board} of the game. The game
ends when there are no unoccupied elements of $V$. Maker wins the
game if in the end, the vertices occupied by Maker contain as a
subset at least one of the sets in $\mathcal{F}$, the family of {\em
winning sets} of the game. Breaker wins otherwise.

Chv\'atal and Erd\H{o}s \cite{ChEr} were the first to consider
biased Maker-Breaker games on the edge set of the complete graph.
They realized that natural graph games are often ``easily'' won by
Maker when played fairly (that is when Maker and Breaker each claim
one element at a time). Thus for many graph games, it is natural to
give Breaker some advantage. In a $(1:b)$ Maker-Breaker game we
follow the same rule as above, but Maker claims one element each
round while Breaker claims $b$ elements each round. It is not too
difficult to see that Maker-Breaker games are {\em bias monotone}.
More specifically, if for some fixed game, Maker can win the $(1:b)$
game, then Maker can win the $(1:b')$ game for every $b'<b$. Thus it
is natural to consider the {\em critical bias} of a game, which is
defined as the maximum $b_0$ such that Maker wins the $(1:b_0)$
game.

One of the first biased games that Chv\'atal and Erd\H{o}s
considered in their paper was the Hamiltonicity game played on the
edge set of the complete graph. They proved that the $(1:1)$ game is
Maker's win, and that for any fixed positive $\varepsilon$ and $b(n)
\ge (1+\varepsilon)\frac{n}{\log n}$, the $(1:b)$ game is Breaker's
win for large enough $n$. They then conjectured that the critical
bias of this game should go to infinity as $n$ goes to infinity.
Bollob\'as and Papaioannou \cite{BoPa} verified their conjecture and
proved that the critical bias is at least $c \frac{\log n}{\log \log
n}$ for some constant $c>0$. Beck \cite{Beck2} improved on this
result by proving that the critical bias is at least $(\frac{\log
2}{27} - o(1))\frac{n}{\log n}$, thereby establishing the correct
order of magnitude of the critical bias. Krivelevich and Szab\'o
\cite{KrSz} further improved this result, and recently Krivelevich
\cite{Krivelevich} established the fact that the critical bias of
this game is asymptotically $\frac{n}{\log n}$. We refer the reader
to \cite{Beck4} for more information on Maker-Breaker games, and
general positional games.

In this context, and similarly to that of the question considered in
the previous subsection, we would like to strengthen Dirac's theorem
from the Maker-Breaker game point of view. Let $G$ be a graph of
minimum degree at least $\frac{n}{2}$ and consider the Hamiltonicity
Maker-Breaker game played on $G$ (note that $G$ is Hamiltonian by
Dirac's theorem). We can then ask the following questions: ``will
Maker win the $(1:1)$ game on any Dirac graph?'', and if so, then
``what is the largest bias $b$ such that Maker wins the $(1:b)$
game?''. In this paper we establish the threshold $b_0$ such that if
$b \ll b_0$, then Maker wins, and if $b \gg b_0$, then Breaker wins.

\begin{thm} \label{thm_maingame}
There exists a constant $c>0$ such that for $b\le\frac{cn}{\log n}$
and a graph $G$ on $n$ vertices of minimum degree at least
$\frac{n}{2}$, Maker has a winning strategy for the $(1:b)$
Maker-Breaker Hamiltonicity game on $G$.
\end{thm}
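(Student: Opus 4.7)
The plan is to mirror the general framework used for Theorem~\ref{thm_mainrandom}: identify a deterministic Hamiltonicity criterion for subgraphs of a Dirac graph $G$, and then show that Maker has a strategy to force her graph to meet this criterion.

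\emph{Structural core.} I would first establish the following purely deterministic lemma: there is a constant $\alpha > 0$ such that if $H$ is a spanning subgraph of a Dirac graph $G$ with $\delta(H) \ge \alpha \log n$ and the P\'osa expansion property (every $S \subseteq V(G)$ with $|S| \le n/4$ satisfies $|N_H(S) \setminus S| \ge 2|S|$), then $H$ is Hamiltonian. The proof combines the classical P\'osa rotation--extension technique with the standard booster lemma, which asserts that any non-Hamiltonian P\'osa expander admits $\Omega(n^2)$ booster edges, namely edges whose addition either strictly increases the longest-path length or creates a Hamilton cycle. The Dirac hypothesis $\delta(G) \ge n/2$ ensures that sufficiently many boosters actually lie in $G$.

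\emph{Maker's strategy.} Maker plays in two phases, using $\Theta(n \log n)$ moves in total (consistent with $|E(G)| = \Theta(n^2)$ and bias $b = \Theta(n/\log n)$). In \textbf{Phase~I (expander building)}, Maker forces $\delta(M) \ge \alpha \log n$ and P\'osa expansion simultaneously. For each ``bad event'' --- a vertex $v$ that could end up with fewer than $\alpha \log n$ Maker-edges, or a pair $(S, T)$ with $|S| \le n/4$ and $|T| = 2|S|$ witnessing a potential expansion violation --- assign an Erd\H{o}s--Selfridge-type weight $(b+1)^{-d}$, where $d$ counts the edges of $G$ that Maker can still claim to defeat the event. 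The Dirac condition $\delta(G) \ge n/2$ forces $d = \Omega(n)$ for every bad event, and the threshold $b \le cn/\log n$ is precisely calibrated so that the total initial potential is less than $1$. Maker plays greedily to decrease this potential, and a standard calculation shows that it stays bounded by $1$ throughout, so no bad event ever materializes. In \textbf{Phase~II (booster closure)}, Maker's graph $M_1$ is now a P\'osa expander, so the booster lemma produces $\Omega(n^2)$ unclaimed boosters in $G \setminus M_1$; since this phase lasts at most $n$ rounds, Breaker can claim only $O(bn) = o(n^2)$ further edges during it, hence an unclaimed booster always exists. Maker appends one per move, producing a Hamilton cycle after at most $n$ additional moves.

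\emph{Main obstacle.} The technical heart of the proof is the potential calculation in Phase~I, which must simultaneously defeat the $n$ degree-deficit events and the $\sum_k \binom{n}{k}\binom{n}{2k}$ expansion-deficit events. The Dirac hypothesis is essential here, as it supplies the $\Omega(n)$ free edges per event needed to offset this combinatorial entropy; without it, the potential sum would be uncontrollable. The calibration $b \le cn/\log n$ is exactly the threshold at which the argument barely succeeds, matching the bias blow-up expected from the $\log n/n$ threshold in Theorem~\ref{thm_mainrandom}. Indeed, the same structural lemma underlies Theorem~\ref{thm_mainrandom}: a random subgraph $G_p$ of a Dirac graph with $p \ge C\log n/n$ satisfies the same degree-plus-expansion hypothesis a.a.s.~via Chernoff bounds and a union bound, so both results are instances of the principle that any not-too-sparse ``pseudo-random'' subgraph of a Dirac graph is Hamiltonian.
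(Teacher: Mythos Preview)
Your proposal has a genuine gap: Phase~I cannot succeed for all Dirac graphs, because not every Dirac graph contains a P\'osa expander at all, let alone one that Maker can force against an adversary. Consider $G$ consisting of two vertex-disjoint cliques $A,B$ of size $n/2$ together with a perfect matching between them; this has $\delta(G)=n/2$. Breaker's strategy is to spend her first $O(\log n)$ rounds claiming all $n/2$ matching edges; with bias $b=\Theta(n/\log n)$ this is accomplished before Maker has claimed more than $O(\log n)$ of them. From that point on, Maker's graph can acquire no further $A$--$B$ edges. Now take any $S\subset A$ of size $n/4$ that avoids the (at most $O(\log n)$) endpoints of Maker's cross edges; then $N_{M}(S)\subset A$, so $|N_{M}(S)\setminus S|\le |A|-|S|=n/4<2|S|$. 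Hence Maker's final graph is never a P\'osa expander, and your Phase~I potential argument cannot be carried out: the ``bad event'' for this pair $(S,T)$ has only $O(n)$ defending edges in $G$ (namely the matching edges), not the $\Omega(n)$ you claim, so its potential weight is not small enough.

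There is a second, related gap in Phase~II. Even when Maker can build a P\'osa expander, the standard booster lemma only yields endpoint sets $S_P,T_v$ of size roughly $n/4$, and the Dirac hypothesis does \emph{not} force $\Omega(n^2)$ edges of $G$ between two sets of size $n/4$ (the trivial bound $|N_G(v)\cap T_v|\ge n/2+n/4-n$ is vacuous). The paper's entire Section~3 is devoted to pushing the endpoint sets up to size $(\tfrac12+\varepsilon)n$---this is the content of Claim~\ref{cla:plusandminus} and the surrounding ``half-expander'' machinery---precisely so that the pigeonhole $|N_G(v)\cap T_v|\ge \varepsilon n$ becomes non-trivial. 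Both of these obstructions are why the paper cannot use a one-size-fits-all expander argument and instead invokes the structural trichotomy of Lemma~\ref{lem:dirac}: case~(ii) (near two cliques) is handled by building Hamilton-connected subgraphs inside each clique separately and splicing them with two cross edges, and case~(iii) (near complete bipartite) requires the bipartite frame machinery of Section~3.2. Your plan would work essentially as written for case~(i), but it does not address the extremal cases.
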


Our theorem implies that the critical bias of this game has order of
magnitude $\frac{n}{\log n}$ (note that the critical bias is at most
$(1+o(1))\frac{n}{\log n}$ by the result of Chv\'atal and Erd\H{o}s
mentioned above). Note that in this theorem, once all the elements
of the board are claimed, the edge density of Maker's graph is of
order of magnitude $\frac{\log n}{n}$ and this is the same as in
Theorem \ref{thm_mainrandom}. This suggests that as in many other
Maker-Breaker games, the ``probabilistic intuition'', a relation
between the critical bias and the threshold probability of random
graphs, holds here as well (see, \cite{Beck, Beck3}). In fact, this
is not a coincidence, and we will prove both theorems under one
unified framework.

\bigskip

\noindent \textbf{Notation}. A graph $G=(V,E)$ is given by a pair of
its vertex set $V=V(G)$ and edge set $E=E(G)$. We use $|G|$ or $|V|$
to denote the size of its vertex set. For a subset $X$ of vertices,
we use $e(X)$ to denote the number of edges within $X$, and for two
sets $X,Y$, we use $e(X,Y)$ to denote the number of pairs $(x,y)$
such that $x \in X, y\in Y$, and $\{x,y\}$ is an edge (note that $e(X,X)=2e(X)$). $G[X]$
denotes the subgraph of $G$ induced by a subset of vertices $X$. We
use $\overline{X}$ to denote the complement $V \setminus X$ of $X$,
and $N(X)$ to denote the collection of vertices which are adjacent
to some vertex of $X$. For two graphs $G_1$ and $G_2$ over the same
vertex set $V$, we define their intersection as $G_1 \cap G_2 = (V,
E(G_1)\cap E(G_2))$, their union as $G_1 \cup G_2 = (V, E(G_1) \cup
E(G_2))$, and their difference as $G_1 \setminus G_2 = (V, E(G_1)
\setminus E(G_2))$.

When there are several graphs under consideration, to avoid
ambiguity, we use subscripts such as $N_{G}(X)$ to indicate the
graph that we are currently interested in. We also use subscripts
with asymptotic notations to indicate dependency. For example,
$\Omega_\varepsilon$ will be used to indicate that the hidden
constant depends on $\varepsilon$. Throughout the paper, whenever we
refer, for example, to a function with subscript as $f_{3.1}$, we
mean the function $f$ defined in Lemma/Theorem 3.1. To simplify the
presentation, we often omit floor and ceiling signs whenever these
are not crucial and make no attempts to optimize absolute constants
involved. We also assume that the order $n$ of all graphs tends to
infinity and therefore is sufficiently large whenever necessary.
All logarithms will be in base $e \approx 2.718$.

\section{Dirac graphs}
\label{section_dirac} The following lemma used by S\'ark\"ozy and
Selkow \cite{SaSe}, and by Cuckler and Kahn \cite{CuKa}, classifies
Dirac graphs into three categories (a similar lemma has also been
used by Koml\'os, Sark\"ozy, and Szemer\'edi \cite{KoSaSz}). This
classification is an important tool in controlling Dirac graphs. A
\emph{half set} of a graph is a subset of the vertex set which has
size either $\lfloor\frac{n}{2}\rfloor$ or
$\lceil\frac{n}{2}\rceil$.

\begin{lem} \label{lem:dirac}
Let $\alpha\le\frac{1}{320}$ and $\gamma \le\frac{1}{10}$ be fixed
positive reals such that $\gamma \ge 32\alpha$. If $n$ is large
enough, then for every graph $G$ on $n$ vertices with minimum degree
at least $\frac{n}{2}$, one of the following holds:
\begin{enumerate}[(i)]
  \setlength{\itemsep}{1pt} \setlength{\parskip}{0pt}
  \setlength{\parsep}{0pt}
\item $e(A,B)\ge\alpha n^{2}$ for all half-sets $A$ and $B$ (not necessarily disjoint),
\item There exists a set $A$ of size $\frac{n}{2}\le|A|\le(\frac{1}{2}+16\alpha)n$
such that $e(A,\overline{A})\le6\alpha n^{2}$, and the induced subgraphs
on both $A$ and $\overline{A}$ have minimum degree at least $\frac{n}{5}$,
or
\item There exists a set $A$ of size $\frac{n}{2}\le|A|\le(\frac{1}{2}+16\alpha)n$
such that the bipartite graph induced by the edges between $A$
and $\overline{A}$ has at least $(\frac{1}{4}-14\alpha)n^{2}$
edges, and minimum degree at least $\frac{\gamma}{2}n$.
Moreover, either $|A|=\lceil\frac{n}{2}\rceil$, or the induced
subgraph $G[A]$ has maximum degree at most $\gamma
n$.\end{enumerate}
\end{lem}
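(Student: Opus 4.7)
The plan is to assume that (i) fails and then produce a single set $A$ witnessing either (ii) or (iii), via a structural dichotomy followed by a cleaning step. The failure of (i) yields half-sets $A_0, B_0$ with $e(A_0,B_0)<\alpha n^2$. Writing $C=A_0\cap B_0$, $X=A_0\setminus B_0$, $Y=B_0\setminus A_0$, I will use the identity
\[ e(A_0,B_0)=2e(C)+e(C,X)+e(C,Y)+e(X,Y), \]
which forces each nonnegative summand to be less than $\alpha n^2$. Setting $t=|C|$, the dichotomy is according to whether $t$ is close to $n/2$ (heavy overlap of $A_0, B_0$) or small (they are nearly complementary); intermediate values of $t$ are reduced to the boundary cases by small adjustments to $A_0$ using the pieces above.

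\emph{Heavy overlap.} Then $C$ is close to a half-set with $e(C)<\alpha n^2/2$; after enlarging $C$ by $O(\alpha n)$ arbitrary vertices to a genuine half-set $A$ we get $e(A)=O(\alpha n^2)$. The Dirac bound then gives $e(A,\overline A)\ge |A|\cdot n/2 - 2e(A)\ge (1/4-O(\alpha))n^2$, already above the $(1/4-14\alpha)n^2$ required in (iii). For the bipartite minimum degree $\gamma n/2$, any offending vertex has degree exceeding $(1-\gamma)n/2$ into its own side; a double-count against $e(A)=O(\alpha n^2)$ bounds the number of offenders on each side by $O(\alpha n)$. I then greedily flip these offenders across the partition, invoking the slack $\gamma\ge 32\alpha$ to control the total enlargement of $A$ within the window $[n/2,(1/2+16\alpha)n]$. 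The final clause in (iii) (``either $|A|=\lceil n/2\rceil$ or $\max_A\deg\le\gamma n$'') is maintained by the choice of flipping order: if no flip is needed I leave $|A|$ at the half-set value, and otherwise I only ever add vertices of small $\deg_A$, so that $\max_A\deg\le\gamma n$ at the end.

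\emph{Light overlap.} Then $|X|,|Y|\ge(1/2-o(1))n$ and $e(X,Y)<\alpha n^2$; combined with the Dirac bound applied to $X$ (most of the $n/2$-degrees of vertices in $X$ cannot go to $Y$ or to the small residual region $\overline{A_0\cup B_0}$), this forces $e(A_0,\overline{A_0})\le O(\alpha)n^2$ after a short calculation. Taking $A=A_0$ gives the cut bound of (ii) (adjusted so that $e(A,\overline A)\le 6\alpha n^2$). It remains to achieve minimum degree $n/5$ in $G[A]$ and $G[\overline A]$: each bad vertex (with $\deg_A(v)<n/5$ if $v\in A$, analogously if $v\in\overline A$) contributes at least $3n/10$ to the cut, so there are $O(\alpha n)$ such vertices. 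I flip them all to the opposite side in a single pass, using a slightly relaxed threshold (e.g.\ $n/4$ in place of $n/5$) to absorb the second-order effect that a bad vertex may lose a few neighbors to other simultaneous flips. A final swap of $A$ and $\overline A$, if necessary, ensures $|A|\ge n/2$.

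The main obstacle is the numerical bookkeeping: one must verify that the single-pass flips in the cleaning step do not create new offending vertices and that the net change in $|A|$ keeps it inside $[n/2,(1/2+16\alpha)n]$. The relation $\gamma\ge 32\alpha$ is used precisely to absorb these second-order effects and to ensure compatibility of the cleaning thresholds with the constants in (ii) and (iii); the structural dichotomy via the four-part decomposition is conceptually the crux, while the post-processing, though delicate in its constant chasing, is mechanical.
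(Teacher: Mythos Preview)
Your approach is essentially the paper's: assume (i) fails, derive a dichotomy on $t=|A_0\cap B_0|$, and clean by flipping offending vertices. Two steps in your heavy-overlap case are not right as stated.

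First, the sentence ``a double-count against $e(A)=O(\alpha n^2)$ bounds the number of offenders on each side'' is only valid for offenders in $A$. An offender $v\in\overline A$ has more than $(1-\gamma)n/2$ neighbours in $\overline A$, and you have no upper bound on $e(\overline A)$ (indeed $G[\overline A]$ could be complete). The paper instead bounds offenders on both sides using the \emph{lower} bound $e(A,\overline A)\ge(\tfrac14-O(\alpha))n^2$: if $k$ vertices of $\overline A$ each send at most $n/4$ edges to $A$, then $e(A,\overline A)\le k\cdot\tfrac n4+(|\overline A|-k)\cdot|A|$, which forces $k\le 16\alpha n$. Second, your argument for the final clause of (iii) only controls the degree (into $A$) of vertices \emph{added} to $A$; vertices that were in $A$ all along may still have large degree there, since $e(A)=O(\alpha n^2)$ does not cap the maximum degree. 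The paper fixes this with a separate post-processing loop: while $|A'|>\lceil n/2\rceil$ and some vertex of $A'$ has degree exceeding $\gamma n$ inside $A'$, move that vertex to $B'$. At most $16\alpha n$ vertices are moved, so the bipartite minimum degree drops by at most $16\alpha n\le\gamma n/2$, which is precisely where $\gamma\ge 32\alpha$ is invoked.

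As a minor remark, your phrase ``intermediate values of $t$ are reduced to the boundary cases by small adjustments'' undersells what your own decomposition already gives: since $|\,\overline{A_0\cup B_0}\,|=t$, summing degrees over $C$ yields $t\cdot\tfrac n2\le 2e(C)+e(C,X)+e(C,Y)+t^2<\alpha n^2+t^2$, so $t(n/2-t)<\alpha n^2$ and intermediate $t$ is simply impossible --- no adjustment is needed.
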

\begin{proof}
Assume that property (i) does not hold, i.e., that $e(A,B)<\alpha
n^{2}$ for some two half-sets $A$ and $B$. For simplicity, we assume
that $n$ is even and $|A|=|B|=\frac{n}{2}$ (for odd $n$, some small
order terms will be added to the computation). Note that in this
case we have $|\overline{A\cup B}|=|A\cap B|$,  it follows that for
all $v \in A\cap B$, \[ |N(v)\cap(A\cup
B)|\ge|N(v)|-|\overline{A\cup B}|\ge\frac{n}{2}-|A\cap B|.\]
Therefore, \[ \alpha n^{2}>e(A,B)\ge \sum_{v\in A\cap
B}|N(v)\cap(A\cup B)|\ge |A\cap B|\cdot\left(\frac{n}{2}-|A\cap
B|\right).\] If $5\alpha n \le |A \cap B| \le (\frac{1}{2} -
5\alpha)n$, then the right hand side of the above estimate is at
least $5\alpha n \cdot (\frac{1}{2} - 5\alpha)n$, which by $\alpha
\le \frac{1}{320}$ is larger than $\alpha n^2$. Thus the above
inequality implies that either $|A\cap B|\le5\alpha n$ or $|A\cap
B|\ge(\frac{1}{2}-5\alpha)n$. We consider these two cases
separately.

\medskip
\noindent \textbf{Case 1} : $|A\cap B|\le5\alpha n$.

In this case, we have $|\overline{A} \setminus B|\le5\alpha n$ and
thus $e(A,\overline{A})\le e(A,B)+|\overline{A} \setminus B|\cdot
|A| \le \alpha n^2 + \frac{5}{2}\alpha n^{2} \le 4\alpha n^2$. Let
$A_{0}=A$ and $B_{0}=\overline{A}.$ Let $A_{1}$ be the set of
vertices of $A_{0}$ which have at most $\frac{n}{4}$ neighbors in
$A_{0}$, and note that by the minimum degree condition of the graph
$G$, every vertex in $A_{1}$ has at least $\frac{n}{4}$ neighbors in
$B_{0}$. Since $e(A, \overline{A}) \le 4\alpha n^2$, we have
$|A_{1}|\le 16\alpha n$. Similarly, we can define $B_{1}$ so that
$|B_{1}|\le 16\alpha n$. Let $A'=(A_{0}\setminus A_{1})\cup B_{1}$
and $B'=(B_{0}\setminus B_{1})\cup A_{1}$. We obtain a new partition
$V=A'\cup B'$ such that
\[
\left(\frac{1}{2}-16\alpha\right)n\le|A'|,|B'|\le\left(\frac{1}{2}+16\alpha\right)n\]
and the minimum degree inside each part is at least
$\frac{n}{4}-16\alpha n\ge\frac{n}{5}$. Moreover, we have
\begin{eqnarray*}
e(A',B') & = & e(A_{0},B_{0})-e(A_{1},B_{0} \setminus B_{1})+e(A_{1}, A_{0} \setminus A_{1})
                             -e(B_{1},A_{0} \setminus A_{1})+e(B_{1}, B_{0} \setminus B_{1})\\
         & \le & e(A_{0},B_{0})-e(A_{1},B_{0})+e(A_{1}, A_{0})
                             -e(B_{1},A_{0})+e(B_{1}, B_{0}) + 2e(A_1,B_1),
\end{eqnarray*}
which by $e(A_1,B_0) \ge e(A_1,A_0)$ and $e(B_1,A_0) \ge e(B_1,B_0)$
gives
\begin{eqnarray*}
e(A',B') \le e(A_{0},B_{0}) + 2e(A_1,B_1) \le 4\alpha n^{2}+2|A_{1}||B_{1}|
   \le 4\alpha n^2+2^9\alpha^2 n^2 \le  6\alpha n^{2}.
\end{eqnarray*}
Since $|A'\cup B'| = n$,
the larger set among $A'$ and $B'$ has size at least $\frac{n}{2}$,
and it satisfies property (ii).

\medskip
\noindent \textbf{Case 2} : $|A\cap B|\ge(\frac{1}{2}-5\alpha)n$.

In this case, we have $|A \setminus B|\le5\alpha n$, and therefore
$e(A,A)\le e(A,B)+|A \setminus B|\cdot |A| \le \alpha n^2 +
\frac{5}{2}\alpha n^2 \le 4\alpha n^{2}$. By the minimum degree
condition, we have
$e(A,\overline{A})\ge|A|\cdot\frac{n}{2}-e(A,A)\ge(\frac{1}{4}-4\alpha)n^{2}.$
Let $A_{0}=A$ and $B_{0}=\overline{A}$. Let $A_{1}$ be the set of
vertices of $A_{0}$ which have at most $\frac{n}{4}$ neighbors in
$B_{0}$. Note that by the minimum degree condition of the graph $G$,
every vertex in $A_{1}$ has at least $\frac{n}{4}$ neighbors in
$A_{0}$. Also, by the estimate \[
\left(\frac{1}{4}-4\alpha\right)n^{2}\le
e(A_0,B_0)\le\frac{n}{4}\cdot|A_{1}|+\frac{n}{2}\cdot\left(\frac{n}{2}-|A_{1}|\right)=\frac{n^{2}}{4}-\frac{n}{4}\cdot|A_{1}|,\]
we have $|A_{1}|\le 16\alpha n$. Similarly define $B_{1}$ so that
$|B_{1}|\le 16\alpha n$. Let $A'=(A_{0}\setminus A_{1})\cup B_{1}$
and $B'=(B_{0}\setminus B_{1})\cup A_{1}$. We have
\begin{eqnarray*}
e(A',B') & = & e(A_{0},B_{0})-e(A_{1},B_{0} \setminus B_{1})+e(A_{1}, A_{0} \setminus A_{1})
                             -e(B_{1},A_{0} \setminus A_{1})+e(B_{1}, B_{0} \setminus B_{1})\\
         & \ge & e(A_{0},B_{0})-e(A_{1},B_{0})+e(A_{1}, A_{0})
                             -e(B_{1},A_{0})+e(B_{1}, B_{0}) - 2e(A_1,B_1),
\end{eqnarray*}
which by $e(A_1,B_0) \le e(A_1,A_0)$ and $e(B_1,A_0) \le e(B_1,B_0)$
gives
\begin{align*}
e(A',B') &\ge e(A_{0},B_{0}) - 2e(A_1,B_1)
 \ge \left(\frac{1}{4}-4\alpha\right)n^{2}-2|A_1||B_1| \\
 &\ge \left(\frac{1}{4}-4\alpha\right)n^2 - 2^9\alpha^2n^2
 \ge \left(\frac{1}{4}-6\alpha\right)n^{2},
 \end{align*}
and all vertices in $A'$ have at least $\frac{n}{4}-16\alpha
n\ge\frac{n}{5}$ neighbors in $B'$ (and vice versa).

Since $|A' \cup B'|= n$, we may assume without loss of generality
that $|A'|\ge\lceil\frac{n}{2}\rceil$. Note that $|A'| \le |A_0| +
|B_1| \le \left(\frac{1}{2}+16\alpha\right)n$. If
$|A'|=\lceil\frac{n}{2}\rceil$ or $G[A']$ has maximum degree at most
$\gamma n$, then we have already found our set $A'$. Otherwise, move
a vertex in $A'$ which has at least $\gamma n$ neighbors in $A'$ to
the other side $B'$, and update $A',B'$ accordingly. Repeat this
until we reach the point where $|A'|=\lceil\frac{n}{2}\rceil$ or
$G[A']$ has maximum degree at most $\gamma n$. Since we moved at
most $16\alpha n$ vertices from $A'$ to $B'$, all the vertices in
$B'$ have at least $\gamma n - 16\alpha n \ge \frac{\gamma}{2}n$
neighbors in $A'$. On the other hand, all the remaining vertices in
$A'$ still have at least $\frac{n}{5}\geq \frac{\gamma}{2}n$
neighbors in $B'$. Finally, $e(A',B')$ decreases by at most
$16\alpha n\cdot\frac{n}{2}=8\alpha n^{2}$ and is still at least
$(\frac{1}{4} - 14\alpha)n^2$. Thus we found our claimed set as in
property (iii).
\end{proof}

\section{Rotation and extension}

We will prove our two main theorems under one general framework
provided in this section.
Our main tool is P\'osa's rotation-extension technique
which first appeared in \cite{Posa} (see also
\cite[Ch. 10, Problem 20]{Lovasz}). We start by briefly
discussing this powerful tool,
which exploits the expansion property of the graph.

\begin{figure}[b]
  \centering
%  \begin{tabular}{c}
    \begin{tikzpicture}

%%
%% Second
%%
\def\xendtwo{8 cm}
\def\yshifttwo{2.2 cm}
%   \draw [color=white] (0,0) circle(0.1mm);

   \foreach \x in {0.0,0.8,...,8.8} {
            \draw [fill=black] (\x cm, \yshifttwo) circle (0.5mm);
        }

   \foreach \x in {0.0,0.8,1.6} {
            \draw (\x cm,\yshifttwo) -- (\x cm + 0.8cm,\yshifttwo);
        }
   \draw [dotted] (2.4 cm,\yshifttwo) -- (3.2cm,\yshifttwo);

   \foreach \x in {3.2,4.0,...,8.0} {
            \draw (\x cm,\yshifttwo) -- (\x cm + 0.8cm,\yshifttwo);
        }

 %          \draw [dashed] (1.5 cm,\yshift) -- (1.9cm,\yshift);
 %          \draw [dashed] (4.5 cm,\yshift) -- (4.9cm,\yshift);

    \draw (\xendtwo, \yshifttwo) .. controls (6cm,\yshifttwo + 1.1cm)
            and (4.4cm,\yshifttwo + 1.1cm) .. (2.4cm, \yshifttwo);
%    \draw[-latex] (1.6cm, \yshifttwo) ..controls(3cm, \yshifttwo + 0.7cm)
%            and (4.2cm,\yshifttwo + 0.7cm) .. (5.6cm, \yshifttwo + 0.1cm);

    \draw (3.3cm, \yshifttwo - 0.4cm) node {$v_{i+1}$};
    \draw (2.46cm, \yshifttwo - 0.4cm) node {$v_{i}$};

    \draw (0.0cm, \yshifttwo - 0.4cm) node {$v_{0}$};
    \draw (8.0cm, \yshifttwo - 0.4cm) node {$v_{\ell}$};

\end{tikzpicture}
%  \end{tabular}
  \caption{Rotating a path.}
  \label{fig_rotation}
\end{figure}
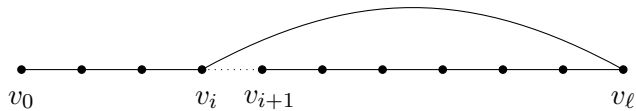

Let $G$ be a connected graph and let $P = (v_0, \cdots, v_\ell)$ be
a path on some subset of vertices of $G$ ($P$ is not necessarily a
subgraph of $G$). If $\{v_0, v_\ell\}$ is an edge of the graph, then
we can use it to close $P$ into a cycle. Since $G$ is connected,
either the graph $G \cup P$ is Hamiltonian, or there exists a longer
path in this graph. In the second case, we say that we
\emph{extended} the path $P$.

Assume that we cannot directly extend $P$ as above, and assume that
$G$ contains an edge of the form $\{v_\ell, v_i\}$ for some $i$.
Then $P' = (v_0, \cdots, v_i, v_\ell, v_{\ell-1}, \cdots, v_{i+1})$
forms another path of length $\ell$ in $G \cup P$ (see Figure
\ref{fig_rotation}). We say that $P'$ is obtained from $P$ by a
\emph{rotation} with {\em fixed endpoint} $v_0$, \emph{pivot point}
$v_i$, and {\em broken edge} $(v_i, v_{i+1})$. Note that after
performing this rotation, we can now close a cycle of length $\ell$
also using the edge $\{v_0, v_{i+1}\}$ if it exists in $G \cup P$.
As we perform more and more rotations, we will get more such
candidate edges (call them \emph{closing edges}). The
rotation-extension technique is employed by repeatedly rotating the
path until one can find a closing edge in the graph, thereby
extending the path.

Let $P''$ be a path obtained from $P$ by several rounds of
rotations. An important observation that we will use later is that
for every interval $I = (v_j, \cdots, v_k)$ of vertices of $P$ ($1
\le j < k \le \ell$), if no edges of $I$ were broken during these
rotations, then $I$ appears in $P''$ either exactly as it does in
$P$, or in the reversed order. We define the {\em orientation}, or
{\em direction}, of a path $P''$ with respect to an interval $I$ to
be {\em positive} in the former situation, and {\em negative} in the
latter situation.

\medskip

We will use rotations and extensions as described above to prove our
main theorem. The main technical twist is to split the given graph
into two graphs, where the first graph will be used to perform
rotations, and the second graph to perform extensions. Similar
ideas, such as \emph{sprinkling}, have been used in proving many
results on Hamiltonicity of random graphs. The one  closest to our
implementation appears in the recent paper of Ben-Shimon,
Krivelevich, and Sudakov \cite{BeKrSu}.

\subsection{Rotation-Extension for general graphs}

In this subsection, we develop a framework useful in tackling the
first and the second cases of Lemma \ref{lem:dirac}. We assume that
all the graphs appearing in this subsection are defined over a fixed
vertex set $V$ of size $n$ (therefore if there are several graphs
under consideration, then they share the same vertex set). We first
specify the roles of the graphs performing rotations and extensions.
%We first define classes of graphs which perform rotations and extensions.
\begin{defn}
\label{def:rotation}Let $\xi$ be a positive constant. We say
that a graph $G$ has property $\mathcal{RE}(\xi)$
if it is connected, and for every path $P$ with a fixed edge $e$, (i)
there exists a path containing $e$ longer than $P$ in the graph $G \cup P$, or
(ii) there exists a set of vertices $S_{P}$
of size $|S_{P}|\ge\xi n$ such that for every vertex
$v\in S_{P}$, there exists a set $T_{v}$ of size $|T_{v}|\ge\xi n$
such that for every $w\in T_{v}$, there exists a path
containing $e$ of the same length as $P$ that starts at $v$, and ends at $w$.
\end{defn}

\begin{defn}
\label{def:extension}Let $\xi$ be a positive constant and let
$G_{1}$ be a graph with property $\mathcal{RE}(\xi)$. We say that a
graph $G_{2}$ \emph{complements} $G_{1}$, if for every path $P$ with
a fixed edge $e$, (i) there exists a path containing $e$ longer than
$P$ in the graph $G_1 \cup P$, or (ii) there exist $v\in S_{P}$ and
$w\in T_{v}$, such that $\{v,w\}$ is an edge of $G_{1} \cup G_{2}
\cup P$ (the sets $S_{P}$ and $T_{v}$ are as defined in Definition
\ref{def:rotation}).
\end{defn}

The next proposition asserts that two graphs as in the above two
definitions together give Hamiltonicity. In fact, we will obtain a
slightly stronger property which is called {\em Hamilton
connectivity}. A graph is said to be {\em Hamilton connected} if for
every pair of vertices $x$ and $y$, there exists a path of length
$n-1$ that has $x$ and $y$ as its two endpoints. Since a Hamilton
connected graph is necessarily non-empty, by taking $x$ and $y$ as
two endpoints of an edge in the graph, we can see that Hamilton
connectivity implies Hamiltonicity.
\begin{prop} \label{prop:rotationextension}
Let $\xi$ be a positive constant. If $G_{1}\in\mathcal{RE}(\xi)$ and
$G_{2}$ complements $G_{1}$, then $G_{1}\cup G_{2}$ is  Hamilton
connected.
\end{prop}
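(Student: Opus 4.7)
The plan is to reduce Hamilton connectivity to the existence of a Hamilton cycle through a designated edge and then apply the rotation--extension machinery exactly once. Fix distinct vertices $x,y\in V$, set $e=\{x,y\}$, and work in the auxiliary graph $H:=(G_1\cup G_2)\cup\{e\}$; a Hamilton cycle in $H$ through $e$, with $e$ deleted, is a Hamilton $xy$-path in $G_1\cup G_2$. Since the edge $e$ by itself is a path in $H$ through $e$, the family of paths in $H$ containing $e$ is non-empty, so I would let $P$ be a \emph{longest} such path and aim to show that $P$ extends to a cycle through $e$ spanning all of $V$.

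Applying Definition~\ref{def:rotation} to $G_1$ and $P$, the maximality of $P$ rules out case (i), since any longer path in $G_1\cup P\subseteq H$ containing $e$ would contradict the choice of $P$; hence case (ii) holds and produces the sets $S_P$ and $\{T_v\}_{v\in S_P}$. Applying Definition~\ref{def:extension} to $G_2$, maximality once again rules out case (i), so case (ii) supplies $v\in S_P$ and $w\in T_v$ with $\{v,w\}\in G_1\cup G_2\cup P\subseteq H$. The rotation property then furnishes a path $P'$ of the same length as $P$ from $v$ to $w$ containing $e$, all of whose edges lie in $G_1\cup P\subseteq H$; closing $P'$ with $\{v,w\}$ yields a cycle $C\subseteq H$ on $|V(P)|$ vertices that still passes through $e$.

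To finish, suppose $V(C)\neq V$. Using the connectivity of $G_1$ (built into $\mathcal{RE}(\xi)$), I would select an edge $\{u',c\}\in E(G_1)$ with $u'\notin V(C)$ and $c\in V(C)$. Since $c$ lies on exactly two edges of $C$ and at most one of them equals $e$, one can delete a non-$e$ cycle-edge incident to $c$ and then append $\{u',c\}$; the resulting path lies in $H$, still contains $e$, and is strictly longer than $P$, contradicting the choice of $P$. Hence $C$ is a Hamilton cycle of $H$ through $e$, and deleting $e$ produces the desired Hamilton $xy$-path in $G_1\cup G_2$. The only delicate point is that every step---the rotations defining $P'$, the closing by $\{v,w\}$, and the final extension of $C$---must preserve the fixed edge $e$; this is guaranteed by the ``contains $e$'' clauses built into Definitions~\ref{def:rotation} and~\ref{def:extension}, together with the simple counting observation that at least one of the two cycle-edges at $c$ is available to be removed.
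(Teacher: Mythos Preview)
Your proof is correct and follows essentially the same route as the paper: add the edge $e=\{x,y\}$, take a longest path through $e$ in the augmented graph, use the $\mathcal{RE}(\xi)$/complement pair to close it into a cycle through $e$, and then invoke connectivity of $G_1$ to contradict maximality if the cycle is not Hamiltonian. The only cosmetic difference is that the paper phrases the augmentation as ``$G_2':=G_2\cup\{e\}$ still complements $G_1$'' whereas you work directly in $H=(G_1\cup G_2)\cup\{e\}$; these are equivalent since the definitions quantify over arbitrary paths $P$ regardless of ambient graph.
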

\begin{proof}
%We prove Hamilton connectivity, from which Hamiltonicity easily
%follows.
Let $v_{1}$ and $v_{2}$ be two arbitrary vertices. If
$\{v_{1},v_{2}\}$ is not an edge of $G_{1}\cup G_{2}$, then let
$G_2'$ be the graph obtained by adding the edge $e =
\{v_{1},v_{2}\}$ to the graph $G_2$, otherwise let $G_2' = G_2$.
Note that $G_2'$ complements $G_1$. Let $P$ be a longest path in
$G_1 \cup G_2'$ which contains $e$ (say it has length $\ell$). By
Definition \ref{def:extension}, there exist vertices $v'\in S_{P}$
and $w'\in T_{v'}$ such that $\{v',w'\}$ is an edge of $G_{1}\cup
G_{2}'$ (where the sets $S_{P}$ and $T_{v}$ are as in Definition
\ref{def:rotation}). Thus we can find a cycle containing $e$, of
length $\ell$.

If this cycle is not a Hamilton cycle, then by the connectivity of
$G_{1}$, there exists a vertex $x$ not in the cycle, which is
incident to some vertex of the cycle. There are two ways to
construct a path of length $\ell +1$ using this edge and the cycle,
and one of them necessarily contains the edge $e$. Since this
contradicts the maximality of $P$, the cycle must have been a
Hamilton cycle. By removing the edge $e$, we get a Hamilton path
connecting $v_1$ and $v_2$ in $G_1 \cup G_2$.
\end{proof}

Thus our strategy for proving Hamiltonicity is to find a subgraph
with property $\mathcal{RE}(\xi)$ and a one that complements it. In
the remainder of the subsection, we provide a list of deterministic
properties, which when satisfied, imply property
$\mathcal{RE}(\xi)$. After establishing this lemma, later it will
suffice to verify that these deterministic properties hold for the
graphs  we are interested in.

\begin{defn} \label{def:expander}
Let $\varepsilon$ and $r$ be positive reals.
A graph $G$ is a {\em half-expander
with parameters $\varepsilon$ and $r$}, if the following properties hold.
\begin{enumerate}[(i)]
  \setlength{\itemsep}{1pt} \setlength{\parskip}{0pt}
  \setlength{\parsep}{0pt}
\item For every set $X$ of vertices of size $|X|\le \frac{\varepsilon n}{r}$, $|N(X)|\ge r|X|$,
\item for every set $X$ of vertices of size $|X|\ge \frac{n}{\varepsilon r}$, $|N(X)|\ge(\frac{1}{2}-\varepsilon)n$, and
\item for every pair of disjoint sets $X,Y$ such that $|X|,|Y|\ge(\frac{1}{2}-\varepsilon^{1/5})n$,
$e(X,Y)>2n$.
\end{enumerate}
\end{defn}

\begin{lem} \label{lem_halfexpander}
There exists a positive $\varepsilon_0$ such that for every positive
$\varepsilon \le \varepsilon_0$, the following holds for every $r
\ge 16\varepsilon^{-3} \log n$: every half-expander
with parameters $\varepsilon$ and $r$ has property
$\mathcal{RE}(\frac{1}{2}+\varepsilon)$.
\end{lem}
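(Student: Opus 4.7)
The plan is to use P\'osa's rotation-extension technique, modified to keep the fixed edge $e$ inside every rotated path. Let $P$ have endpoints $v_{0}$ and $v_{\ell}$ and assume case (i) of Definition~\ref{def:rotation} fails, so no path in $G\cup P$ containing $e$ is strictly longer than $P$. The goal is to build the set $S_{P}$ by rotating $P$ with $v_{0}$ held fixed, and then build each $T_{v}$ by the symmetric argument. At every rotation step there is a single forbidden pivot (the endpoint of $e$ that is closer to $v_{0}$ in the current path), so the classical P\'osa analysis goes through with a bounded additive loss. Since case (i) fails, every rotated path also fails to extend, forcing $N_{G}(u)\subseteq V(P)$ for each reachable endpoint $u$; the standard closure property then gives $|N_{G}(S_{P})\setminus S_{P}|\le 2|S_{P}|+O(1)$.

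The size of $S_{P}$ is grown in three stages, one per clause of Definition~\ref{def:expander}. In the first stage, while $|S_{P}|\le \varepsilon n/r$, the small-set expansion $|N(X)|\ge r|X|$ combined with the P\'osa charging multiplies $|S_{P}|$ by a factor of order $r$ per rotation round, so $O(\log n/\log r)$ rounds push $|S_{P}|$ past $\varepsilon n/r$. Applying (i) to a size-$\varepsilon n/r$ subset of $S_{P}$ then gives $|N_{G}(S_{P})|\ge\varepsilon n$, so the next round pushes $|S_{P}|$ past $n/(\varepsilon r)$ (this is where the bound $r\ge 16\varepsilon^{-3}\log n$ is used). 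The second stage invokes (ii): now $|N_{G}(S_{P})|\ge(\tfrac{1}{2}-\varepsilon)n$, and combined with the closure bound one rotation round more yields $|S_{P}|\ge(\tfrac{1}{2}-\varepsilon^{1/5})n$.

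The third stage is the final push using (iii). Suppose for contradiction $|S_{P}|<(\tfrac{1}{2}+\varepsilon)n$; then $|\overline{S_{P}}|>(\tfrac{1}{2}-\varepsilon^{1/5})n$, so property (iii) supplies $e_{G}(S_{P},\overline{S_{P}})>2n$. Each such cross-edge $\{u,w\}$ with $u\in S_{P}$ and $w\in\overline{S_{P}}$ produces, via one valid rotation (after discarding the $O(1)$ forbidden pivots per endpoint), a new endpoint $w'$ that by closure lies in $S_{P}$. A P\'osa-type double count over these $>2n$ cross-edges shows that if $|S_{P}|$ remained below $(\tfrac{1}{2}+\varepsilon)n$, some iterated sequence of such rotations would force the discovery of an edge extending $P$ outside $V(P)$, contradicting the failure of case (i). Hence $|S_{P}|\ge(\tfrac{1}{2}+\varepsilon)n$.

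For each $v\in S_{P}$, pick a representative rotated path $P_{v}$ from $v_{0}$ to $v$ (still containing $e$), and rerun the same three-stage argument with $v$ as the static endpoint to obtain $|T_{v}|\ge(\tfrac{1}{2}+\varepsilon)n$. The main technical obstacle is the third stage: converting the $>2n$ edge bound from property (iii) into a vertex bound that pushes $|S_{P}|$ past $n/2$, while respecting both closure under rotations and the fixed-edge constraint. The first two stages follow the standard rotation-expansion template, and the fixed-edge restriction only contributes a bounded additive overhead throughout.
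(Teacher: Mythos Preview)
Your third stage contains a genuine gap. Having $e_{G}(S_{P},\overline{S_{P}})>2n$ leads nowhere: every such edge $\{u,w\}$ with $u\in S_{P}$ and $w\in\overline{S_{P}}$ already lies inside $V(P)$ (since case~(i) fails), and rotating $P_{u}$ at pivot $w$ produces a new endpoint which, by the very definition of $S_{P}$ as the rotation closure, already belongs to $S_{P}$. No iterated sequence of such rotations can force an edge leaving $V(P)$, and the ``P\'osa-type double count'' you invoke is never specified. The complete balanced bipartite graph $K_{n/2,n/2}$ with $P$ a Hamilton path illustrates the issue: there $S_{P}$ equals one colour class, $|S_{P}|=n/2$, and $e(S_{P},\overline{S_{P}})=n^{2}/4$, yet no rotation enlarges $S_{P}$.

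What is actually needed is not edges into $\overline{S_{P}}$ but edges into the set $Z:=V(P)\setminus(S_{P}^{+}\cup S_{P}^{-})$ (indices taken in the original $P$-ordering), because a pivot $w\in Z$ with unbroken neighbouring edges would yield a new endpoint $w^{-}$ or $w^{+}$ lying \emph{outside} $S_{P}$, a contradiction. To make $Z$ large one must show that $|S_{P}^{+}\cup S_{P}^{-}|$ is not much bigger than $|S_{P}|$, i.e.\ that $|S_{P}^{+}\Delta S_{P}^{-}|=O(\varepsilon^{1/4})n$ whenever $|S_{P}|<(\tfrac12+\varepsilon)n$. The paper proves exactly this structural claim via a delicate double-rotation argument (rotating in two different ways to reach the same endpoints, so that the two resulting families of paths traverse a large segment of $P$ with opposite orientations); this is the heart of the proof and is entirely absent from your sketch.

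A secondary issue: the closure bound $|N_{G}(S_{P})\setminus S_{P}|\le 2|S_{P}|+O(1)$ you use in Stage~2 is not the standard P\'osa property and does not hold without additional control, because for distinct $u\in S_{P}$ the paths $P_{u}$ may be structured very differently, so the same pivot $w$ can yield different new endpoints depending on $u$. The paper handles this by maintaining a \emph{universality} invariant during the growth phase (all endpoint-paths agree on which intervals of $P$ are unbroken and on their order and orientation), at the cost of a bounded factor per round; this invariant is what makes both Stage~2 and the structural claim above go through.
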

\begin{proof}
For simplicity of notation, we assume that we are given a
half-expander with parameters $\varepsilon^{4}$ and $r$, and will
prove that it has property
$\mathcal{RE}(\frac{1}{2}+\varepsilon^{4})$. Let $\varepsilon_0 =
25^{-5}$, and suppose that we are given positive reals $\varepsilon
\le \varepsilon_0$ and $r \ge 16\varepsilon^{-12}\log n$. Let $G$ be
a half-expander with parameters $\varepsilon^4$ and $r$. To prove
that $G$ is connected, take two vertices $v$ and $w$. By Properties
(i) and (ii) of Definition \ref{def:expander}, there exist sets
$A_v$ and $A_w$ each of size at least $(\frac{1}{2}-\varepsilon^4)n
\ge (\frac{1}{2} - \varepsilon^{4/5})n$ such that the connected
component containing $v$ contains $A_v$, and the connected component
containing $w$ contains $A_w$. There exists an edge between $A_v$
and $A_w$ by Property (iii) of Definition \ref{def:expander}.
Consequently, there exists a path between $v$ and $w$ in $G$ for all
pairs of vertices $v$ and $w$.

Let $P=(v_{0},\cdots,v_{\ell})$ be a path with some fixed edge $e_f
= \{v_f, v_{f+1}\}$, and let $F$ be the set $\{v_{f-1}, v_f,
v_{f+1}, v_{f+2}\}$. If there is a path longer than $P$ that
contains $e_f$ in the graph $G \cup P$, then there is nothing to
prove since it satisfies the first condition of Definition
\ref{def:rotation}. Thus we may assume that $P$ is a longest path in
$G \cup P$ that contains $e_f$. We start by rotating $v_{0}$ to
construct the set $S_{P}$ as in Definition \ref{def:rotation}.
Afterwards, we will construct the sets $T_v$ the same way.

For a subset $X=\{v_{a_{1}},v_{a_{2}}\cdots,v_{a_{i}}\}$ of vertices
of $P$, let $X^{-}=\{v_{a_{1}-1},v_{a_{2}-1},\cdots,v_{a_{i}-1}\}$
and $X^{+}=\{v_{a_{1}+1},v_{a_{2}+1},\cdots,v_{a_{i}+1}\}$ (if the
index becomes either $-1$ or $\ell+1$, then remove the corresponding
vertex from the set $X^{-}$ or $X^{+}$). Throughout the proof we
will repeatedly consider the operation $X^+$ and $X^-$ for various
sets $X$. While performing this operation, one must take special
care of the vertices which lie in the boundary of the intervals
$P_i$. However, we will ignore the effect of the boundary vertices,
since it will simplify the computation, and will only affect it by
some small order terms. Let $k= 4\varepsilon^{-4}\log n$, and
partition the path $P$ into $k$ consecutive intervals of lengths as
equal as possible. Denote these intervals as $P_{1},\cdots,P_{k}$.

\medskip
\noindent {\bf Step 1}: {\em Initial rotations.}
\medskip

Our argument is based on that of Sudakov and Vu \cite{SuVu} where
one performs rotations and extensions in a very controlled manner.
Let $S_{0}=\{v_{0}\}$. We will iteratively construct sets $S_{i}$
for $i\ge0$ so that $|S_{i}|=\varepsilon^{-8i}$, and for all $v\in S_{i}$, there
exists a path of length $\ell$ which starts at $v$, ends at
$v_{\ell}$, contains $e_f$, and has been obtained from $P$ by $i$
rounds of rotations. We will continue to construct sets as long as
$|S_{i}|\le \frac{\varepsilon^4 n}{r}$. Note that this implies $i\le\log n$.
For a vertex $v\in S_{i}$, let $e_{v,1},\cdots,e_{v,i}$ be the
broken edges created in constructing the path from $v$ to
$v_{\ell}$, in the order they were broken (we call them the {\em
broken edges} of $v$). Note that the order in which the broken edges
were created is not necessarily the same as the order in which they
appear along the path. We impose the following conditions on $S_i$:

\medskip
\noindent {\bf (Universality)} for all $1 \le a \le i$, there exists
an index $j_a$ such that for all $v \in S_i$, the edge $e_{v,a}$
belongs to $P_{j_a}$. Moreover, if several broken edges belong to
the same interval, then the order in which they appear within the
interval also does not depend on $v$.
\medskip

 An important byproduct of this property is that for every
interval $P_{j}$ which does not contain a broken edge, there is a
fixed orientation so that for all $v \in S_i$, the path from $v$ to
$v_\ell$ traverses $P_{j}$ in this orientation. Moreover, the order
in which each non-broken interval appears along these paths does not
depend on $v$ (thus is {\em universal}, see Figure
\ref{fig_generalpath}).

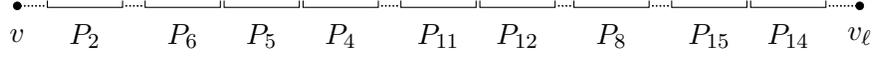
\begin{figure}[t]
  \centering
%  \begin{tabular}{c}
    \begin{tikzpicture}

\def\yshifttwo{0 cm}

   \draw [fill=black] (0,0) circle (0.5mm);
   \draw (0.0cm, \yshifttwo - 0.4cm) node {$v$};
   \draw [fill=black] (112mm,0mm) circle (0.5mm);
   \draw (112mm, \yshifttwo - 4mm) node {$v_{\ell}$};

   \foreach \x/\y/\z in {4/14/P_{2}, 17/27/P_{6}, 27.5/37.5/P_{5}, 38/48/P_{4}, 51/61/P_{11}, 61.5/71.5/P_{12}, 74/84/P_{8}, 87/97/P_{15}, 97.5/107.5/P_{14}} {
     \draw (\x mm, 0.7 mm) -- (\x mm,-0.2mm) -- ( \y mm,-0.2mm) --(\y mm, 0.7mm);
     \draw (\x mm + 5 mm, \yshifttwo - 4mm) node {$\z$};
   }

   \foreach \x in {0.5,1.0,...,3.5} {
            \draw [fill=black] (\x mm, \yshifttwo) circle (0.05mm);
   }
   \foreach \x in {14.5,15.0,...,16.5} {
            \draw [fill=black] (\x mm, \yshifttwo) circle (0.05mm);
   }
   \foreach \x in {48.5,49.0,...,50.5} {
            \draw [fill=black] (\x mm, \yshifttwo) circle (0.05mm);
   }
   \foreach \x in {72,72.5,...,73.5} {
            \draw [fill=black] (\x mm, \yshifttwo) circle (0.05mm);
   }
   \foreach \x in {84.5,85.0,...,86.5} {
            \draw [fill=black] (\x mm, \yshifttwo) circle (0.05mm);
   }
   \foreach \x in {108,108.5,...,111.5} {
            \draw [fill=black] (\x mm, \yshifttwo) circle (0.05mm);
   }
   
   \draw [color=white] (0, 10mm) circle (0.02mm);

\end{tikzpicture}
%  \end{tabular}
  \caption{A path obtained by several rounds of rotations. The order of the intervals
have been changed as a consequence of these rotations. Intervals that do
not appear in the figure ($P_1, P_3, P_7, P_9, P_{10}, P_{13}, P_{16}, \cdots$) contain broken edges,
and the vertices in those intervals will be spread out among the dotted area.
Note that non-broken intervals appear either in the original order, or in the reversed order.
By universality, even if we change the vertex $v$ into another vertex,
the order of the non-broken intervals $P_2, P_6, P_5, \cdots$ as above will not change.}
  \label{fig_generalpath}
\end{figure}

Assume we have completed constructing the set $S_{i}$ which has the
properties listed above. Let $S_{i+1}^0$ be the set $N(S_i)
\setminus (F \cup \bigcup_{a=1}^{i} (S_{a} \cup S_{a}^- \cup
S_{a}^+))$. If there is a vertex in $S_{i+1}^0$ which is not in the
path $P$, then we can use it to find a path longer than $P$ that
contains the edge $e_f$. Therefore we may assume that $S_{i+1}^0
\subset V(P)$. Since we removed all the vertices belonging to $S_a,
S_a^-, S_a^+$ for $a \le i$ when defining $S_{i+1}^0$, all the
vertices in $S_{i+1}^0$ can be used as pivot points to create new
endpoints (note that the broken edges obtained by this procedure are
necessarily distinct from all the previous broken edges). Since
$|N(S_i)| \ge r |S_i|$, we have the following estimate on the size
of $S_{i+1}^0$:
\begin{align}
|S_{i+1}^0| \ge r|S_{i}|-3\sum_{a=1}^{i}|S_{a}| - 4
\ge r \varepsilon^{-8i} - 3 \varepsilon^{-8(i+1)} - 4 \ge \left(\frac{r}{2}\right) \varepsilon^{-8i}.  \label{eqn:initialrotation}
\end{align}
It now suffices to choose a suitable subset of $S_{i+1}^{0}$ which also
satisfies universality.

Pick an arbitrary $v\in S_{i}$, and for each $P_{j}$, let $w(j)$ be
the number of broken edges of $v$ that $P_{j}$ contains (note that
by the universality, the choice of $v$ does not matter). Note that
\[ \sum_{j=1}^{k}(w(j)+1) = i + k \le \left(\frac{4}{\varepsilon^{4}}+1 \right)\log n 
\le \frac{\varepsilon^8 r}{2}. \]
Consequently, there is an index $j_*\in[k]$ such that $P_{j_*}$
contains at least $\frac{2(w(j_*)+1)}{\varepsilon^8r}$ proportion of the vertices
of the set $S_{i+1}^0$. In other words we have
\[ |P_{j_*}\cap S_{i+1}^0| \ge \left( \frac{r}{2} \right) \varepsilon^{-8i} \cdot \frac{2(w(j_*)+1)}{\varepsilon^8 r}
= \varepsilon^{-8(i+1)}  \cdot (w(j_*)+1)).\]
By using the vertices in $P_{j_*}\cap S_{i+1}^0$ as pivot points, we
can obtain a set of new endpoints $S_{i+1}'$ with
$|S_{i+1}'|\ge\varepsilon^{-8(i+1)}\cdot(w(j_*)+1)$. By construction, all the
newly added pivot points and corresponding broken edges belong to
the same interval $P_{j_*}$. Therefore, it suffices to find a large
subset $S_{i+1}$ of $S_{i+1}'$ such that the broken edges of these
vertices that belong to $P_{j_*}$ appear in some universal order
(note that this automatically is true for indices other than $j_*$
by the same property for $S_{i}$).

By definition, for $h = w(j_*)$, there exist indices $i_1, i_2,
\cdots, i_h$ of value at most $i$ such that for every $v\in
S_{i+1}'$, the broken edges $e_{v,i_1},\cdots,e_{v,i_h},e_{v,i+1}$
are in $P_{j_*}$ ($e_{v,i+1}$ is the newly created broken edge in
the $(i+1)$-th round). By the hypothesis, we know that
$e_{v,i_1},\cdots,e_{v,i_h}$ appear in some fixed order which does
not depend on $v$. There are $h+1$ relative positions that
$e_{v,i+1}$ can lie within that ordering. We let $S_{i+1}$ be a
subset of $S_{i+1}'$ of size at least
$\frac{|S_{i+1}'|}{h+1}=\frac{|S_{i+1}'|}{w(j_*)+1}$ such that for
all the vertices in this set, the new broken edge has the same
relative order in $P_{j_*}$ with respect to the edges
$e_{v,i_1},\cdots,e_{v,i_h}$. This choice of $S_{i+1}$ satisfies all
our assumptions, and we have $|S_{i+1}|\ge \varepsilon^{-8(i+1)}$. Redefine
$S_{i+1}$ as an arbitrary subset of size exactly $\varepsilon^{-8(i+1)}$. Repeat
the above until we have a set $S_{t-1}$ of size at least
$\frac{\varepsilon^4 n}{r}$ and redefine $S_{t-1}$ as an arbitrary subset
of size exactly $\frac{\varepsilon^4 n}{r}$. Repeat the above process 
one more time to obtain a set $S_{t}$ of
size exactly $\frac{n}{\varepsilon^4 r}$ (note that $t \le \log n$).

\medskip
\noindent {\bf Step 2}: {\em Terminal rotations.}
\medskip

Let $k'=k-t-2$. There are at most $t+2$ intervals which contain at
least one broken edge for some vertex of $S_{t}$, or intersects $F$,
and thus at least $k'$ intervals do not have this property. For
notational convenience, relabel the intervals so that the intervals
$P_{1},\cdots,P_{k'}$ contain no broken edges and do not intersect
$F$, and let $P' = P_1 \cup \cdots \cup P_{k'}$. Note that
\[ |V(P) \setminus P'| \le \left(t + 2\right) \left(\frac{n}{k} + 1 \right) \le (\log n + 2) \left(\frac{n}{k} + 1\right)
\le \frac{4n \log n}{k} \le \varepsilon^4 n. \] Further assume that
for $1\le i\le k'$, each path from $v\in S_{t}$ to $v_{\ell}$
traverses the interval $P_{i}$ positively (we lose some generality
here, but we use no properties of this special case, and the
assumption is made just for the sake of clarity of presentation).
Define $S_{P}$ as the collection of vertices $v \in P$ which have
the property that in $G \cup P$ there exists a path of length $\ell$
containing $e_f$ that starts at $v$ and ends at $v_{\ell}$. Note
that $S_P$ contains $S_t$.

We want to show that $|S_{P}|\ge(\frac{1}{2}+\varepsilon^{4})n$.
Assume to the contrary that
$|S_{P}|<(\frac{1}{2}+\varepsilon^{4})n$. We claim that under this
assumption, the inequality $|S_{P}^{+}\Delta S_{P}^{-}|\le
22\varepsilon n$ holds. The proof of this claim will be given later
(see Claim \ref{cla:plusandminus}). Given this claim, consider the
set $Z=P' \setminus (S_P^+ \cup S_P^-)$. We have $|S_{P}^{+}\cup
S_{P}^{-}|\le|S_{P}|+22\varepsilon
n\le(\frac{1}{2}+23\varepsilon)n$. Since $|V(P)\setminus P'|\le \varepsilon^4 n$,
this implies $|Z|\ge |V(P)| - (\frac{1}{2}+23\varepsilon)n - \varepsilon^4 n
\ge |V(P)| - (\frac{1}{2}+24\varepsilon)n$, and for the set
$Z' := Z \cap (P')^- \cap (P')^+$, we have $|Z'| \ge |Z| - 2k' \ge
|V(P)| - (\frac{1}{2}+25\varepsilon)n$. Note that if some vertex $v \in S_P$
is adjacent to some vertex $w \in Z'$ and both $\{w^-,w\}$, $\{w,
w^+\}$ have not been broken while obtaining $v$ as an endpoint, then
we obtain a contradiction since this necessarily gives $w^-$ or
$w^+$ as a new endpoint, which by the definition of $Z'$ is not in
$S_P$.

Let $Y=N(S_{t})\cap P'$. Since the path from $v\in S_{t}$ to
$v_{\ell}$ traverses the intervals $P_{i}$ ($1\le i\le k'$)
positively, we can use the vertices of $Y$ as pivot points to
construct endpoints $Y^{-}$. We have
$|Y^{-}|\ge|N(S_{t})|-(\varepsilon^4 n + k)
\ge(\frac{1}{2}-3\varepsilon)n$. For each vertex $y \in Y^{-}$, fix
one path of length $\ell$ which starts at $y$, ends at $v_\ell$, and
has the property that all the broken edges but the last one lie
outside of $P'$. Thus at most one broken edge will lie inside $P'$.
Consequently, if some vertex $y \in Y^-$ has at least three
neighbors in $Z'$, then we necessarily have a vertex $w \in Z'$ for
which both $\{w, w^-\}$ and $\{w, w^+\}$ are not broken edges of
$y$. Then by the observation made in the previous paragraph we reach
a contradiction. Furthermore, by the maximality of $P$, we know that
there are no edges between $Y^{-}$ and $V \setminus V(P)$. Therefore,
for the set $Z'' = Z' \cup (V \setminus V(P))$ which is of size
$|Z''| \ge (|V(P)| - (\frac{1}{2}+25\varepsilon)n) + (|V| - |V(P)|)
\ge (\frac{1}{2}-25\varepsilon)n$, there are no edges between
$Y^{-}$ and $Z''$.
However, since $|Y^{-}|$ and $|Z''|$ are both at least
$(\frac{1}{2}-25\varepsilon)n$, by property (iii) of Definition
\ref{def:expander}, there exist more than $2n$ edges between $Y^{-}$
and $Z''$ and therefore some vertex in $Y^-$ must have at least three
neighbors in $Z''$. Consequently, we must have had
$|S_{P}|\ge(\frac{1}{2}+\varepsilon^{4})n$.

\medskip
\noindent {\bf Step 3}: {\em Rotating the other endpoint.}
\medskip

For every $v\in S_P$, there exists a path containing $e_f$ of length
$\ell$ which starts at $v$ and ends at $v_\ell$. Now by repeating
the above for the other endpoint $v_\ell$, we can see that for every
$v \in S_P$, there exists a set $T_v$ of size at least $(\frac{1}{2}
+ \varepsilon^4)n$ such that for every $w \in T_v$, there exists a
path of length $\ell$ which starts at $v$, ends at $w$, and contains
the edge $e_f$.
\end{proof}

It remains to prove the claim. The intuition behind this perhaps
strangely looking claim comes from the following two non-Hamiltonian
graphs whose minimum degrees are slightly less than $\frac{n}{2}$.
First, consider the graph consisting of two disjoint cliques of size
$\frac{n}{2}$ connected by a single edge, and consider a Hamiltonian
path in it. It is not too difficult to see that by rotating the
starting point we only get the first half of the path as new
starting points. More precisely, using the same notation as in the
proof above, we will get $|S_P| = \frac{n}{2} - 1$ and $|S_P^+
\Delta S_P^-| = 3$. Second, consider a complete bipartite graph in
which one part $A$ has one more vertex than the other part $B$, and
consider a Hamiltonian path in it (it must be an $A$-$A$ path). In
this case, by rotating the starting point we only get the vertices
in $A$ as new starting points, and therefore $|S_P| = \frac{n}{2}$
and $|S_P^+ \Delta S_P^-| = 0$. Note that the two graphs above both
have $|S_P|$ close to $\frac{n}{2}$ and $|S_P^+ \Delta S_P^-|$
small, but for very different reasons. Our claim asserts that, in
general, if the given graph has $|S_P|$ close to $\frac{n}{2}$, then
it indeed is true that the graph has small $|S_P^+ \Delta S_P^-|$.

\begin{claim} \label{cla:plusandminus}
If $|S_P|<(\frac{1}{2}+\varepsilon^4)n$, then $|S_{P}^{+}\Delta S_{P}^{-}|\le 22\varepsilon n$.
\end{claim}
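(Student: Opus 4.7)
The plan is to show the contrapositive: if $|S_P^+ \Delta S_P^-| > 22\varepsilon n$, I will derive $|S_P| \ge (\frac{1}{2} + \varepsilon^4)n$, contradicting the hypothesis. Set $A = S_P^+ \setminus S_P^-$; since the maps $u \mapsto u^{\pm}$ are bijections on $V(P)$ up to the two endpoints of $P$, one has $|S_P^+| = |S_P^-| = |S_P|$ up to $O(1)$, so $|A| = \frac{1}{2}|S_P^+ \Delta S_P^-| > 11\varepsilon n$. By definition every $u \in A$ satisfies $u^- \in S_P$ and $u^+ \notin S_P$.

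The main idea is to show that for the vast majority of $u \in A$, one can construct a path of length $\ell$ in $G \cup P$ containing $e_f$ that starts at $u^+$ and ends at $v_\ell$, thereby contradicting $u^+ \notin S_P$. The construction exploits the rotation structure established in Step 1. Since the inclusion $N(S_t) \cap P'_+ \subseteq S_P^+$ forces $|S_P^+| \ge (\frac{1}{2} - 2\varepsilon^4)n$, and the hypothesis gives $|S_P^+| = |S_P| < (\frac{1}{2} + \varepsilon^4)n$, at most $O(\varepsilon^4 n)$ vertices of $A$ can fall outside $N(S_t) \cap P'_+$. For the remaining $u \in A$, I fix $v \in S_t$ with $\{v, u\} \in E(G)$ and rotate $Q_v$ at pivot $u$ to obtain $Q_{u^-}$, a path from $u^-$ to $v_\ell$. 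By the universality of orientations, the interval of $Q_v$ containing $u$ was traversed positively, so in $Q_{u^-}$ the vertex $u^+$ is the immediate successor of $u$. A second rotation of $Q_{u^-}$, pivoting at $u^{++}$ via the potential edge $\{u^-, u^{++}\}$, would then yield $u^+$ as the new endpoint and force $u^+ \in S_P$. Since $u \in A$ precludes this, the edge $\{u^-, u^{++}\}$ must be absent from $E(G)$ for every such $u$.

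To convert these forbidden edges into the lower bound $|S_P| \ge (\frac{1}{2} + \varepsilon^4)n$, I would combine them with the half-expander properties. Applying property (ii) to $A^- \subseteq S_P$---which has size $|A| > 11\varepsilon n \ge n/(\varepsilon^4 r)$ since $r \ge 16\varepsilon^{-12}\log n$---yields $|N(A^-)| \ge (\frac{1}{2} - \varepsilon^4)n$. By the maximality of $P$ one has $N(A^-) \subseteq V(P)$, so a further P\'osa-style rotation using vertices of $A^-$ as new base endpoints generates many additional valid path-endpoints in $S_P$. The combined structural restrictions---many $\{u^-, u^{++}\}$ non-edges together with $N(A^-)$ large---force $|S_P|$ beyond $(\frac{1}{2} + \varepsilon^4)n$.

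The main obstacle will be the interval bookkeeping across two successive rotations: after the rotation $Q_v \to Q_{u^-}$, some intervals that were unbroken in $Q_v$ become broken in $Q_{u^-}$, so I must verify that the interval containing $u^+$ stays unbroken and correctly oriented for the second pivot at $u^{++}$ to actually produce $u^+$. When the direct pivot fails, alternative pivots have to be found using the minimum degree of $u^-$ together with property (iii) of the half-expander for near-extremal sets; cleanly converting the resulting system of ``missing skip edges'' into the definitive lower bound on $|S_P|$ is the most delicate step, and may require an additional layer of rotations or a double-counting argument relating $A^-$ to $\{u^{++}: u \in A\}$.
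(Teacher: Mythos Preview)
Your proposal has a genuine gap at the final and most important step. The two-rotation mechanism you describe is correct: if $u \in A \cap N(S_t) \cap P'$ and $\{u^-, u^{++}\} \in E(G)$, then indeed $u^+ \in S_P$, so for each such $u$ the pair $\{u^-, u^{++}\}$ must be a non-edge. But this yields only about $11\varepsilon n$ forbidden pairs---one per vertex of $A$---and no half-expander axiom is violated by $O(\varepsilon n)$ missing edges. Property~(iii) concerns edges between two sets of size at least $(\tfrac12 - \varepsilon^{1/5})n$, while $|A^-| \approx 11\varepsilon n$ is far too small; property~(ii) gives $|N(A^-)| \ge (\tfrac12 - \varepsilon^4)n$, but rotating from the various $u^- \in A^-$ uses a different path $Q_{u^-}$ for each $u$, with different broken intervals and orientations, so the neighbourhoods cannot be aggregated into a single set of new endpoints. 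You essentially acknowledge this yourself in the last paragraph. The core difficulty is that from a single endpoint you can manufacture $u^+$ in only $O(1)$ ways, and you need $\Omega(n)$ ways to force a contradiction via expansion.

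The paper's proof supplies exactly the missing idea: it constructs a set $Y_1$ of size $\Omega(\varepsilon^2 n)$ of endpoints, each of which is reached by \emph{two} rotation sequences whose paths traverse a large segment $Q_3$ of $P$ in \emph{opposite} directions. Consequently, for every $w \in N(Y_1) \cap Q_3$, \emph{both} $w^-$ and $w^+$ lie in $S_P$. A pigeonhole argument (formalised as Proposition~\ref{prop:keyprop}, which exploits the hypothesis $|S_P| < (\tfrac12 + \varepsilon^4)n$ to show that rotations recover almost all of $S_P$ in any target region) then gives $|(S_P^+ \cap Q_3)\,\Delta\,(S_P^- \cap Q_3)| \le 10\varepsilon n$, and the remainder $|S_P \cap \overline{Q_3}|$ is small by construction. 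Your approach never produces endpoints with this dual-orientation property, and without it there is no route from ``a few missing skip edges'' to a bound on $|S_P^+ \Delta S_P^-|$.
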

\begin{proof}
Recall that we will ignore the effect of the boundary vertices while
performing the operations $X^-$ and $X^+$ for sets $X$, since it
will simplify the computation, and will only affect it by some small
order terms. The main strategy is as following. We first rotate the
path $P$ in two ways to obtain some set $Q$ of endpoints in two
different ways while keeping a big chunk $P''$ of $P$ not broken.
For each endpoint $w$ in $Q$, the two paths will traverse $P''$ in
opposite direction (see Figure \ref{fig_twicerotate}). If this is
the case, then both sets $(N(Q) \cap P'')^-$ and $(N(Q) \cap P'')^+$
become subsets of $S_P$. From this we will conclude that the two
sets $S_P^+$ and $S_P^-$ do not differ too much.

We follow the same notation as in the proof above. Recall that the
set $S_P$ was defined as the collection of vertices $v$ in $P$ for
which there exists a path of length $\ell$ starting at $v$ and
ending at $v_\ell$ that contains some fixed edge $e_f$. Recall that
$P' = P_1 \cup \cdots \cup P_{k'}$, $|V(P)\setminus P'| \le
\varepsilon^4 n$, and that by property $(ii)$ of Definition
\ref{def:expander}, for every set $X$ of size $|X| \ge
\frac{n}{\varepsilon^4 r}$, we have $|N(X)| \ge (\frac{1}{2} -
\varepsilon^4)n$. Note that by using the vertices in $N(S_t) \cap
P'$ as pivot points, we get $|S_P \cap P'| \ge |N(S_t) \cap P'| \ge
|N(S_t)| - |V(P) \setminus P'| \ge (\frac{1}{2} - 2\varepsilon^4)n$.
For a subset $I$ of $[k']$, we define $P_I = \cup_{i \in I} P_i$.

We first make a simple observation. Let $X$ be some set of endpoints
obtained by rotating the given path $P$, where $|X| \ge
\frac{n}{\varepsilon^4 r}$. Our rotations have been carefully performed,
hence in the next round of rotation, many vertices in $N(X)$ will
give rise to vertices in $S_P$. Thus if $|N(X)|$ is close to
$\frac{n}{2}$, then since $|S_P| < \left(\frac{1}{2} +
\varepsilon^4\right)n$, we will recover most of the vertices in
$S_P$ in the next round of rotation. The following simple
proposition formalizes this intuition and will be used several times
in proving the claim.

\begin{prop} \label{prop:keyprop}
Let $X$ be a subset of $S_P$ of size $|X| \ge \frac{n}{\varepsilon^4 r}$,
and for every $v \in X$, fix one path of length $\ell$ containing
$e_f$ from $v$ to $v_\ell$. Let $I$ and $J$ be disjoint subsets of
$[k']$, and assume that for every $\eta \notin I$ there exists an
orientation $o_\eta$ such that for every $v \in X$, there exists a
path from $v$ to $v_\ell$ that has no broken edge in $P_\eta$,
and traverses $P_\eta$ in direction $o_\eta$. Then
\[ |N(X) \cap P_J| \ge |S_P \cap P_J| - |P_I| - 3\varepsilon^4 n. \]
\end{prop}
\begin{proof}
For every $\eta \notin I$, since all the paths traverse $P_\eta$ in
direction $o_\eta$, we know that when a vertex in $N(X) \cap P_\eta$
is used as a pivot point, it will create a broken edge in a fixed
direction (to the left of the pivot point if $o_\eta$ is positive,
and to the right of the pivot point otherwise). Therefore we have
\[ |N(X) \cap P_{[k'] \setminus (I \cup J)}| + |S_P \cap P_I| + |S_P \cap P_J| \le |S_P|. \]
Consequently,
\begin{align*}
 |N(X)| &= |N(X) \cap \overline{P'}| + |N(X) \cap P_I| + |N(X) \cap P_J| + |N(X) \cap P_{[k'] \setminus (I \cup J)}| \\
 &\le |V(P) \setminus P'| + |P_I| + |N(X) \cap P_J| + (|S_P| - |S_P \cap P_I| - |S_P \cap P_J|),
\end{align*}
and by our assumption that $|S_P| \leq \big( \frac{1}{2} + \varepsilon^4 \big) n$, we have
\begin{align*}
 |N(X)| \le \left( \frac{1}{2} + 2\varepsilon^4 \right) n + |P_I| + |N(X) \cap P_J| - |S_P \cap P_J|.
\end{align*}
On the other hand, since $|X| \ge \frac{n}{\varepsilon^4 r}$, we have $|N(X)| \ge (\frac{1}{2} - \varepsilon^4)n$.
By combining the two bounds we get
$|S_P \cap P_J| - |P_I| - 3\varepsilon^4 n \le |N(X) \cap P_J|$.
\end{proof}

Let $a_{1}$ be the smallest positive integer such that for
$k_{1}=k'-\varepsilon k \cdot a_{1}$, there exist at least
$2\varepsilon^{2}n$ elements of $S_P$ in
$Q_{1}=P_{[k_1+1,k_1+\varepsilon k]}$ (note that
$|Q_1| \le \varepsilon n$). By construction, there exist at most
$2\varepsilon n$ elements of $S_P$ in $P_{[k_{1}+\varepsilon k+1,k']}$, 
and at least $|S_P \cap P'|-2\varepsilon
n-|Q_{1}|\ge(\frac{1}{2}-5\varepsilon)n$ vertices in
$P_{[1,k_{1}]}$. Let $a_{2}$ be the smallest
positive integer such that for $k_{2}=k_{1}-\varepsilon^{2}k \cdot
a_{2}$, there exist at least $\varepsilon^{3}n$ points of $S_P$ in
$Q_{2}=P_{[k_{2}+1,k_{2}+\varepsilon^{2}k]}$ (note
that $|Q_2| \le \varepsilon^2 n$). Note that there exist at most
$\varepsilon n$ vertices of $S_P$ in $P_{[k_2 + \varepsilon^2 k + 1,
k_1]}$. Let $Q_{3}=P_{[1,k_2]}$.

We defined the sets $Q_1$ and $Q_2$ so that the numbers of vertices
of $S_P$ in both of these sets are quite large. Our goal now is to
find a large number of vertices in $Q_1 \cap S_P$ that can be
obtained by two different rotations, one rotation giving a path that
traverses $Q_3$ positively and the other giving a path that traverses $Q_3$ negatively. To do this,
first, we will perform two rotations, where we begin by finding
endpoints in $Q_2$ using $S_t$, and then use these endpoints to find endpoints
in $Q_1$. Second, we will directly rotate from $S_t$ 
to obtain endpoints in $Q_1$. Since both ways will give a big
proportion of vertices in $Q_1 \cap S_P$, we eventually will find
the set of vertices that we wanted.

We will use Proposition \ref{prop:keyprop} to formalize this idea.
Recall that for $1\le i\le k$, each path from $v\in S_{t}$ to
$v_{\ell}$ traverses the interval $P_{i}$ positively. Let $Y_2 =
N(S_t)^- \cap Q_2$. By our construction we have that for $J = [k_2 + 1, k_2 + \varepsilon^2 k]$,
$|S_P \cap P_J| \geq \varepsilon^3 n$. Thus,
by Proposition \ref{prop:keyprop} with $X =
S_t$, $I = \emptyset$, and $J = [k_2 + 1, k_2 + \varepsilon^2 k]$,
we see that $|Y_2| = |N(S_t) \cap Q_2| \ge (\varepsilon^3 -
3\varepsilon^4) n$. Since we can use the points in $N(S_t) \cap Q_2$
as pivot points to get new endpoints $Y_2$, we have that for every
$v \in Y_2$, there exists a path of length $\ell$ containing $e_f$
which starts at $v$ and ends at $v_\ell$. Moreover, these paths have
exactly one broken edge inside $P'$, and it is in $Q_2$. Thus we can
apply Proposition \ref{prop:keyprop} again with $X = Y_2$, $I = [k_2
+ 1, k_2 + \varepsilon^2 k]$, and $J = [k_1 + 1, k_1 + \varepsilon
k]$ to get $|N(Y_2)^- \cap Q_1| \ge |S_P \cap Q_1| - \varepsilon^2 n
- 3\varepsilon^4 n$. For every vertex $v \in N(Y_2)^- \cap Q_1$,
there exists a path of length $\ell$ containing $e_f$ which starts
at $v$ and ends at $v_\ell$. These paths have exactly two broken
edges inside $P'$, both in $Q_1 \cup Q_2$. Moreover, for every
interval in $Q_3$, all these paths traverse the interval in the
positive direction. Now apply Proposition \ref{prop:keyprop} with $X
= S_t$, $I = \emptyset$, and $J = [k_1 + 1, k_1 + \varepsilon k]$ to
get $|N(S_t)^- \cap Q_1| \ge |S_P \cap Q_1| - 3\varepsilon^4 n$. The
vertices in $N(S_t)^- \cap Q_1$ have similar properties to those in
$N(Y_2)^- \cap Q_1$, but the paths for the vertices $N(S_t)^- \cap
Q_1$ traverse the intervals in $Q_3$ in the negative direction (see
Figure \ref{fig_twicerotate}).

\begin{figure}[t]
  \centering
%  \begin{tabular}{c}
    \begin{tikzpicture}

\def\xend{12}

%%
%% First
%%
\def\yshiftone{2.2 cm}

%   \foreach \x in {0.0,0.6,...,\xend} {
%            \draw [fill=black] (\x cm, \yshiftone) circle (0.5mm);
%        }
   \draw [fill=black] (9.0 cm, \yshiftone) circle (0.5mm);
   \draw [fill=black] (\xend cm , \yshiftone) circle (0.5mm);

%   \foreach \x in {0.0,0.6,..., 6.6} {
 %           \draw (\x cm, \yshiftone) -- (\x cm + 0.6 cm, \yshiftone);
 %       }
   \draw (6.6 cm, \yshiftone) -- (3.3 cm, \yshiftone);
   \draw [-triangle 45] (2.4 cm, \yshiftone) -- (3.3 cm, \yshiftone);
   \draw (2.4 cm, \yshiftone) -- (0 cm, \yshiftone);   
   
   \draw [dotted] (6.6 cm,\yshiftone) -- (7.2 cm,\yshiftone);

   \foreach \x in {7.2,7.8,...,9.0} {
            \draw (\x cm,\yshiftone) -- (\x cm + 0.6cm,\yshiftone);
        }
   \draw [dotted] (9.0 cm,\yshiftone) -- (9.6 cm,\yshiftone);

   \foreach \x in {9.6,10.2,...,11.4} {
            \draw (\x cm,\yshiftone) -- (\x cm + 0.6cm,\yshiftone);
        }

		\draw (0.3 cm,\yshiftone + 0.1cm) -- (0.3 cm,\yshiftone - 0.12 cm) 
							-- (6.2 cm, \yshiftone - 0.12cm) -- (6.2cm,\yshiftone + 0.1cm);
		\draw (3.3 cm, \yshiftone - 0.4cm) node {$Q_3$};	

	\draw (6.3 cm,\yshiftone + 0.1cm) -- (6.3 cm,\yshiftone - 0.12 cm) 
							-- (7.5cm, \yshiftone - 0.12cm) -- (7.5cm,\yshiftone + 0.1cm);
		\draw (6.9cm, \yshiftone - 0.4cm) node {$Q_2$};	

	\draw (8.1 cm,\yshiftone + 0.1cm) -- (8.1 cm,\yshiftone - 0.12 cm) 
							-- (10.5cm, \yshiftone - 0.12cm) -- (10.5cm,\yshiftone + 0.1cm);
		\draw (9.8 cm, \yshiftone - 0.4cm) node {$Q_1$};	

    \draw (0, \yshiftone) .. controls (2.4cm,\yshiftone + 1.1cm)
            and (4.8cm,\yshiftone + 1.1cm) .. (7.2cm, \yshiftone);
    \draw (6.6cm, \yshiftone) .. controls (7.5cm,\yshiftone + 0.5cm)
            and (8.7cm,\yshiftone + 0.5cm) .. (9.6cm, \yshiftone);

    \draw (9.0cm, \yshiftone - 0.4cm) node {$w$};
    \draw (\xend cm, \yshiftone - 0.4cm) node {$v_{\ell}$};

%%
%% Second
%%
\def\yshifttwo{0 cm}

%   \foreach \x in {0.0,0.6,...,\xend} {
%            \draw [fill=black] (\x cm, \yshifttwo) circle (0.5mm);
%        }
   \draw [fill=black] (9.0 cm, \yshifttwo) circle (0.5mm);
   \draw [fill=black] (\xend cm , \yshifttwo) circle (0.5mm);

%   \foreach \x in {0.0,0.6,..., 9.0} {
%            \draw (\x cm, \yshifttwo) -- (\x cm + 0.6 cm, \yshifttwo);
%        }
   \draw [dotted] (9.0 cm,\yshifttwo) -- (9.6 cm,\yshifttwo);

   \draw (9.0 cm, \yshifttwo) -- (3.6 cm, \yshifttwo);
   \draw [-triangle 45] (3.6 cm , \yshifttwo) -- (3.0 cm, \yshifttwo);
   \draw (3.0 cm, \yshifttwo) -- (0 cm, \yshifttwo);

   \foreach \x in {9.6,10.2,...,11.4} {
            \draw (\x cm,\yshifttwo) -- (\x cm + 0.6cm,\yshifttwo);
        }

		\draw (0.3 cm,\yshifttwo + 0.1cm) -- (0.3 cm,\yshifttwo - 0.12 cm) 
							-- (6.2 cm, \yshifttwo - 0.12cm) -- (6.2cm,\yshifttwo + 0.1cm);
		\draw (3.3 cm, \yshifttwo - 0.4cm) node {$Q_3$};	

	\draw (6.3 cm,\yshifttwo + 0.1cm) -- (6.3 cm,\yshifttwo - 0.12 cm) 
							-- (7.5cm, \yshifttwo - 0.12cm) -- (7.5cm,\yshifttwo + 0.1cm);
		\draw (6.9cm, \yshifttwo - 0.4cm) node {$Q_2$};	

	\draw (8.1 cm,\yshifttwo + 0.1cm) -- (8.1 cm,\yshifttwo - 0.12 cm) 
							-- (10.5cm, \yshifttwo - 0.12cm) -- (10.5cm,\yshifttwo + 0.1cm);
		\draw (9.8 cm, \yshifttwo - 0.4cm) node {$Q_1$};

    \draw (0, \yshifttwo) .. controls (3.5cm,\yshifttwo + 1.1cm)
            and (6.1cm,\yshifttwo + 1.1cm) .. (9.6cm, \yshifttwo);
            
    \draw (9.0cm, \yshifttwo - 0.4cm) node {$w$};
    \draw (\xend cm, \yshifttwo - 0.4cm) node {$v_{\ell}$};

\end{tikzpicture}
%  \end{tabular}
  \caption{Rotating in two different ways to get the same endpoint.}
  \label{fig_twicerotate}
\end{figure}
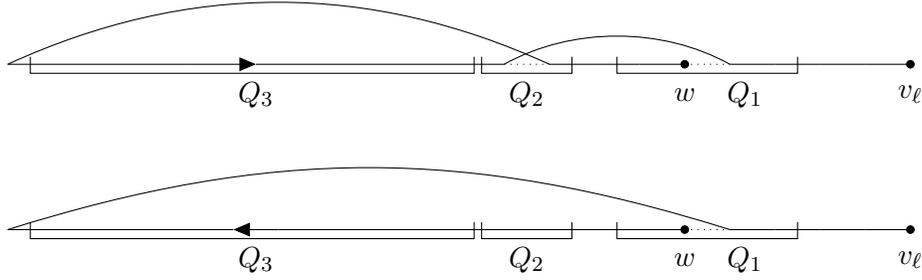

Since both $N(Y_2)^- \cap Q_1$ and $N(S_t)^- \cap Q_1$ are
subsets of $S_P$ and since we constructed $Q_1$ such that $|S_P \cap Q_1| \geq 
2\varepsilon^2 n$, we have for $Y_1 = N(S_t)^- \cap N(Y_2)^- \cap Q_1$,
\[ |Y_1| \ge |S_P \cap Q_1| - \varepsilon^2 n - 6\varepsilon^4 n \ge (\varepsilon^2 - 6\varepsilon^4) n. \]
Note that by using Proposition \ref{prop:keyprop} with
$X= Y_1$, $I = [k_1 + 1, k_1 + \varepsilon k] \cup [k_2 +1, k_2 + \varepsilon^2 k]$, $J = [1, k_2]$, we get
\[ |N(Y_1) \cap Q_3|  \ge |S_P \cap Q_3| - |Q_1| - |Q_2| - 3\varepsilon^4 n \ge |S_P \cap Q_3| - 5\varepsilon n. \]
By the observations made in the previous paragraph, we see that for
every $v \in Y_1$, there exist two paths of length $\ell$ starting
at $v$ and ending at $v_\ell$ such that both paths have at most two
broken edges in $P'$, all belonging to $Q_1 \cup Q_2$. Furthermore,
one of the paths traverses the intervals in $Q_3$ positively, and the
other negatively. Therefore both $N(Y_1)^- \cap Q_3$ and $N(Y_1)^+
\cap Q_3$ are subsets of $S_P$.

The above implies that $|(N(Y_{1})^{-}\cap Q_{3}) \Delta (S_P \cap Q_3)| \le 5\varepsilon n$.
Thus $|(N(Y_1) \cap Q_3) \Delta (S_P^+ \cap Q_3)| \le 5 \varepsilon n$. Similarly we have
$|(N(Y_1) \cap Q_3) \Delta (S_P^- \cap Q_3)| \le 5 \varepsilon n$. By the triangle
inequality we get
\[ |(S_P^- \cap Q_3) \Delta (S_P^+ \cap Q_3)| \le 10 \varepsilon n. \]
Since, by our construction, there are at most $2 \varepsilon n$ vertices of $S_P$ in $P_{[k_{1}+\varepsilon k+1,k']}$ and at most
$\varepsilon n$ vertices of $S_P$ in $P_{[k_2 + \varepsilon^2 k + 1,k_1]}$, 
we can conclude that $|S_P\cap\overline{Q_{3}}|\le |V(P) \setminus P'| + |Q_1| + 
|Q_2| + 3\varepsilon n \le 6\varepsilon n$. This implies that
\[ |S_{P}^{-}\Delta S_{P}^{+}|\le |(S_P^- \cap Q_3)\Delta(S_P^+ \cap Q_3)| + 2|S_P \cap \overline{Q_3}| \le 22\varepsilon n, \]
and completes the proof.
\end{proof}

\subsection{Rotation-extension for bipartite graphs}

In this subsection, we develop a framework useful in tackling the
third case of Lemma \ref{lem:dirac}. Note that the given graph in
this case has a partition of its vertex set so that there are only
few non-adjacent pairs between the two parts. The following
definition gives a nice structure that can be used in this kind of
graphs.

\begin{defn} \label{def:almostbipartite}
Let $G$ be a graph over a vertex set $V$, and let $V_1 \cup V_2$ be
a partition of $V$ satisfying $|V_1| = |V_2| + k$ for some
non-negative integer $k$.
\begin{enumerate}[(i)]
  \setlength{\itemsep}{1pt} \setlength{\parskip}{0pt}
  \setlength{\parsep}{0pt}
\item A tuple $(V_1, V_2, S_V, S_E)$ is
a {\em special frame} of $G$ if $S_V$ is a subset of $V_1$ of
size $k$, and $S_E$ is a set of $k$ vertex-disjoint edges of $G$
in $V_1$ such that each vertex in $S_V$ is incident to exactly
one edge in $S_E$. We refer to $S_V$ as the set of {\em special
vertices}, and $S_E$ is the set of {\em special edges}. Let the
{\em framed subgraph} of $G$ be the subgraph induced by the
edges between $(V_1, V_2)$. Let $V_1' = V_1 \setminus S_V$, and
$V_1'' = V_1 \setminus V(S_E)$.
\item
A tuple $(V_1, V_2, S_V, S_E, f)$ is a {\em matched special frame}
of $G$ if $(V_1, V_2, S_V, S_E)$ is a special frame of $G$, and $f$ is a
perfect matching between the vertices of $V_1'$ and $V_2$.
For $v \in V_1' \cup V_2$, we let $f(v)$ be the vertex matched to $v$ in this matching.
\end{enumerate}
\end{defn}

Throughout this subsection, we fix a vertex set $V$ on $n$ vertices
and a partition $V_1 \cup V_2$ of it satisfying $|V_1| = |V_2| + k$
for some non-negative integer $k$. We then assume that all the
graphs appearing in this subsection are defined over $V$ (therefore
if there are several graphs under consideration, then they share the
same vertex set and its partition). Suppose that we are given a
graph with some special frame $(V_1, V_2, S_V, S_E)$. If $k=0$, then
it suffices to use the edges between the two parts to find a
Hamilton cycle. However, if $k > 0$, then we must use some edges
within $V_1$, and the special edges will be these edges.

To construct a Hamilton cycle in graphs with a matched special
frame, it is easier to consider only a certain class of paths and
cycles.

\begin{defn}
Let $(V_1, V_2, S_V, S_E, f)$ be a matched special frame of some
graph. We say that a path $P$ is a \emph{proper path with respect to
the frame} if it satisfies the following properties: (i) $V_1' \cap
V(P) = f(V_2 \cap V(P))$, (ii) if $P$ contains a special vertex,
then it also contains the special edge incident to it, and (iii) $P$
consists only of edges that intersect both $V_1$ and $V_2$ and of
special edges. We also say that a cycle $C$ is a \emph{proper cycle
with respect to the frame} if it satisfies properties (i),(ii), and
(iii) above (with $C$ replacing $P$). We simply say that a path or
cycle is \emph{proper} if the frame is clear from the context.
\end{defn}

Note that a proper path always has one of its endpoints in $V_1$,
and the other in $V_2$. Indeed, suppose that $P$ is a proper path
with $s$ special edges. Then by properties (i) and (iii), $P$ has
length $2|V_2 \cap V(P)| + s - 1$, and thus switches between $V_1$
and $V_2$ in total $2|V_2 \cap V(P)| - 1$ times. Since this is an
odd number, we can see that the above holds.

We now specify the roles of the graphs performing rotations and
extensions.

\begin{defn}
\label{def:bip-rotation} Let $\xi$ be a positive constant. A graph
$G$ has property $\BIPRE(\xi)$ if it contains a matched special
frame whose framed subgraph is connected, and for every proper path
$P$, at least one of the following holds: (i) there exists a proper
path longer than $P$ in $G \cup P$, or (ii) there exists a set of
vertices $S_{P} \subset V_2$ of size at least $|S_{P}|\ge \xi n$
such that for every vertex $v\in S_{P}$, there exists a set $T_{v}
\subset V_1$ of size $|T_{v}|\ge \xi n$ such that for every $w\in
T_{v}$, there exists a proper path of the same length as $P$ in
$G\cup P$ that starts at $v$ and ends at $w$.
\end{defn}

\begin{defn}
\label{def:bip-extension}Let $\xi$ be a positive constant,
and let $G_{1}$ be a graph with property $\BIPRE(\xi)$. We say that a
graph $G_{2}$ \emph{complements} $G_{1}$,
if for every proper path $P$, (i) there exists
a proper path longer than $P$ in $G_{1} \cup P$, or (ii)
there exist vertices $v\in S_{P}$ and $w\in T_{v}$ such
that $\{v,w\}$ is an edge of $G_{1}\cup G_{2} \cup P$ (the sets $S_{P}$
and $T_{v}$ are as defined in Definition \ref{def:bip-rotation}).
\end{defn}

Two graphs as in the above two definitions together give
Hamiltonicity.

\begin{prop} \label{prop:bip-hamiltonian}
Let $\xi$ be a positive constant.
If $G_{1}\in\BIPRE(\xi)$, and $G_{2}$ complements
$G_{1}$, then $G_{1}\cup G_{2}$ is Hamiltonian.
\end{prop}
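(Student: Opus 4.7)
The plan is to follow the strategy of Proposition~\ref{prop:rotationextension}, suitably adapted to the bipartite setting. Let $P$ be a longest proper path in $G_1 \cup G_2$. Applying Definition~\ref{def:bip-extension} to $P$, case (i) fails by maximality (since $G_1 \cup P \subseteq G_1 \cup G_2$), so case (ii) supplies vertices $v \in S_P$ and $w \in T_v$ with $\{v,w\}$ an edge of $G_1 \cup G_2 \cup P = G_1 \cup G_2$. By Definition~\ref{def:bip-rotation}, there is a proper path $P'$ of the same length as $P$ from $v$ to $w$, and closing $P'$ with the edge $\{v,w\}$ yields a proper cycle $C$ in $G_1 \cup G_2$ with $|V(C)| = |V(P)|$.

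The remaining task is to show that $V(C) = V$, making $C$ a Hamilton cycle. Suppose for contradiction $V(C) \subsetneq V$. The idea is to use the connectivity of the framed subgraph of $G_1$ to locate a vertex outside $V(C)$ which, once attached to $C$ by a suitable edge (bipartite or special) in $G_1$, produces a proper path longer than $P$. The analysis splits naturally according to whether $V \setminus V(C)$ is contained in $S_V$ or meets $V_1' \cup V_2$.

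If $V \setminus V(C) \subseteq S_V$, pick $u \in S_V \setminus V(C)$ and let $u^*$ be its special partner. Since $V_1' \subseteq V(C)$ we have $u^* \in V(C)$, and both $C$-edges at $u^*$ are bipartite (as the unique special edge $\{u,u^*\}$ at $u^*$ is not in $C$). Opening $C$ at one of these bipartite edges and attaching $u$ via $\{u,u^*\} \in G_1$ produces a path with $|V(C)|+1$ vertices that satisfies the three proper-path axioms directly: the $V_1'$- and $V_2$-counts are unchanged, and the only new special vertex $u$ enters together with its special edge. This proper path is longer than $P$, contradicting maximality.

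The main obstacle is the remaining case, when both $V_1' \setminus V(C)$ and $V_2 \setminus V(C)$ are nonempty. Here, attaching a single bipartite neighbor $x \in V_2 \setminus V(C)$ of some $y \in V_1 \cap V(C)$ to $C$ violates property (i), since the $f$-partner $f(x) \in V_1' \setminus V(C)$ must simultaneously enter the path. The strategy is to use the connectivity of the framed subgraph to find bipartite edges of $G_1$ joining both $x$ and $f(x)$ to $V(C)$---possibly after choosing the matched pair $\{x, f(x)\}$ carefully along a shortest bipartite path from $V \setminus V(C)$ to $V(C)$---and then to splice $\{x, f(x)\}$ into $C$ by opening a suitable bipartite edge (or pair of bipartite edges) of $C$ and reconnecting. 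The delicate step is verifying that such a splicing can always be arranged so that the resulting structure is a valid proper path longer than $P$, which again contradicts the maximality of $P$ and finishes the proof.
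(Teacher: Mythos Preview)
Your setup and first case are correct, but the second case has a real gap. You write that the delicate step is ``verifying that such a splicing can always be arranged,'' and indeed you do not carry it out; more importantly, the approach you sketch --- finding bipartite edges from \emph{both} $x$ and $f(x)$ into $V(C)$ and then splicing the pair in --- need not succeed. Connectivity of the framed subgraph only guarantees one crossing edge along a shortest path, not that both members of a matched pair have neighbors in $V(C)$; and even when both do, assembling a single proper path from two separate attachment points, with no edge between $x$ and $f(x)$, is not a local operation on $C$ and you give no argument for why it can be done.

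The missing observation is that the matching $f$ in the matched special frame of $G_1$ consists of actual edges of the framed subgraph of $G_1$: this is how $f$ is produced via Hall's theorem in Lemma~\ref{lem:bipexpander_rotationextension}, and it is exactly what the paper's proof exploits. Once $\{x,f(x)\}\in E(G_1)$, your second case collapses. By connectivity there is a bipartite edge $\{x,v_i\}$ of $G_1$ with $x\notin V(C)$ and $v_i\in V(C)$. If $x$ is not special, open $C$ at a bipartite edge of $C$ incident to $v_i$ and append $(v_i,x,f(x))$; since $f(x)\notin V(C)$ by property~(i) of the proper cycle, this is a proper path on $|V(C)|+2$ vertices. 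If $x$ is special with special edge $\{x,x'\}$, then either $x'\in V(C)$ (open $C$ at $x'$ and append $x$ via the special edge, exactly as in your first case) or $x'\notin V(C)$ (open $C$ at $v_i$ and append $(v_i,x,x',f(x'))$). In every case one gets a longer proper path directly, with no splicing.
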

\begin{proof}
Let $f$ be the matching in the matched special frame of $G_1$ as in
Definition \ref{def:almostbipartite}, and $P$ be a longest proper
path in $G_{1}\cup G_{2}$. Note that we can use the frame of $G_1$
also as a frame of $G_1 \cup G_2$. Since $G_2$ complements $G_1$, by
Definition \ref{def:bip-extension}, we can find a proper cycle
$(v_{0},\cdots,v_{\ell},v_{0})$ in $G_1 \cup G_2$ over the vertex
set of $P$. Assume that this cycle is not Hamiltonian. Then by the
connectivity of the framed subgraph of $G_1$, there exists an edge
of the form $\{x, v_{i}\}$ for some vertex $x$ not in the cycle,
where $x$ and $v_i$ belong to different parts of the frame. We claim
that this violates the maximality assumption on $P$. This will imply
that the graph $G_1 \cup G_2$ is Hamiltonian.

Indeed, either the vertex $x$ is a special vertex or not. If $x$ is
a special vertex, then since $\{x,v_i\}$ cannot be a special edge,
we must have a special edge $\{x,x'\}$ for some $x' \neq v_i$. If
$x'$ is on the path $P$, then we immediately obtain a proper path
longer than $P$. Thus we may assume that $x'$ is not on the path. In
this case we can use the path $(v_i, x, x', f(x'))$ to find a longer
proper path (note that $f(x')$ is also not in the path $P$ by the
definition of a proper path). Finally, if $x$ is not a special
vertex, then we can use the path $(v_i, x, f(x))$ to find a longer
proper path (again, $f(x)$ is not in the path $P$ by the definition
of a proper path). Thus in any case, we deduce a contradiction.
\end{proof}

As in the previous subsection, we provide a list of deterministic properties
which when satisfied, imply property $\BIPRE(\xi)$.
\begin{defn}
\label{def:bip-expander}
Let $\varepsilon$, $r$ be positive constants, and let $k$ be a non-negative
integer. We say that a graph $G$ is a
{\em $k$-bipartite-expander with parameters $\varepsilon$ and $r$},
if it contains a special frame $(V_1, V_2, S_V, S_E)$ with
$|V_1| = |V_2|+k$ such that the following properties hold:
\begin{enumerate}[(i)]
  \setlength{\itemsep}{1pt} \setlength{\parskip}{0pt}
  \setlength{\parsep}{0pt}
\item For all $X\subset V_{1}$, if $|X|\le \frac{n}{r^{3/2}}$, then $|N(X)\cap V_{2}|\ge r|X|$,
and if $|X|\ge \frac{n}{r^{3/4}}$, then $|N(X)\cap V_{2}|\ge(1-\varepsilon)|V_{2}|$, and
\item for all $Y\subset V_{2}$, if $|Y|\le \frac{n}{r^{3/2}}$, then $|N(Y)\cap V_{1}''|\ge r|Y|$,
and if $|Y|\ge \frac{n}{r^{3/4}}$, then $|N(Y)\cap V_{1}''|\ge(1-\varepsilon)|V_{1}''|$.
\end{enumerate}
\end{defn}

We often refer to $k$-bipartite-expanders as {\em bipartite-expanders} when $k$ is
clear from the context.

\begin{lem} \label{lem:bipexpander_rotationextension}
For every positive reals $r \ge 16$, $\varepsilon \le \frac{1}{4}$,
and non-negative integer $k \le \frac{n}{30}$, every
$k$-bipartite-expander with parameters $\varepsilon$ and $r$ has
property $\BIPRE(\frac{1}{6})$.
\end{lem}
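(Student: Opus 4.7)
The plan is to mimic the structure of the proof of Lemma~\ref{lem_halfexpander}, suitably adapted to the bipartite/proper-path setting, while first upgrading the given special frame to a matched one using Hall's theorem combined with the two expansion conditions. Concretely, since $|V_1'|=|V_2|$, I would verify Hall's condition for $V_2$ into $V_1'$: for small $Y\subseteq V_2$ the bound $|N(Y)\cap V_1''|\ge r|Y|$ from property~(ii) already suffices; for intermediate $Y$, restricting to a subset of size $n/r^{3/2}$ yields $|N(Y)\cap V_1''|\ge n/r^{1/2}>|Y|$; for large $Y$, either $(1-\varepsilon)|V_1''|\ge|Y|$ directly from (ii), or $V_2\setminus Y$ is small, in which case applying property~(i) to $X:=V_1'\setminus N(Y)$ forces $|X|\le|V_2\setminus Y|/r$ and hence the desired Hall inequality. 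The hypotheses $\varepsilon\le\tfrac14$ and $k\le n/30$ are exactly what make all of these estimates compatible. Connectedness of the framed subgraph then follows because any two disjoint connected components would each, by iterating the expansion, have to contain at least $(1-\varepsilon)|V_1''|$ vertices of $V_1''$, contradicting $2(1-\varepsilon)>1$.

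For the rotation-extension part, let $P=(v_0,\dots,v_\ell)$ be a proper path with $v_0\in V_1$, $v_\ell\in V_2$, and assume there is no longer proper path in $G\cup P$. The essential observation is that preserving properness restricts us to rotations whose broken edge is a cross edge (breaking a special edge would leave its special vertex stranded on the path, violating proper-path property~(ii)), and pivots $v_i\in V_1''$ are automatically safe because neither path edge incident to such a vertex can be special. Thus for an endpoint set $S\subseteq V_2$ the usable pivot set is $N(S)\cap V_1''$, to which property~(ii) of the bipartite expander applies directly, and the resulting new endpoints always land back in $V_2$, in accordance with the definition of $\BIPRE(\xi)$. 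As in Lemma~\ref{lem_halfexpander}, I would partition $P$ into $O(\log n)$ intervals and iteratively build sets $S_0=\{v_\ell\}\subseteq S_1\subseteq\cdots\subseteq S_t\subseteq V_2$ with $|S_{i+1}|\ge r|S_i|/c$ for some absolute constant $c$ absorbing the losses from the universality partition, previously used vertices, and the at most $2k$ vertices incident to special edges, until $|S_t|\ge n/r^{3/4}$. One final application of property~(ii) then yields $|N(S_t)\cap V_1''|\ge(1-\varepsilon)|V_1''|$, and a direct count using $|V_1''|=|V_2|-k$, $|V_2|\ge(n-k)/2$, $k\le n/30$, and $\varepsilon\le\tfrac14$ gives $|S_P|\ge(1-\varepsilon)|V_1''|-2k-o(n)\ge n/6$. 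The sets $T_v\subseteq V_1$ then arise by the symmetric rotation at the opposite endpoint $v_0$.

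The main obstacle I expect is the bookkeeping of properness throughout the rotations: one must carefully track how special edges interact with the rotation operation and exclude pivots whose use would violate proper-path property~(ii) or (iii), while matching the counts to be consistent with Proposition~\ref{prop:bip-hamiltonian}. A secondary issue is tolerating the loss of up to $2k$ vertices at every iteration (the endpoints of special edges), which is precisely what the numerical hypothesis $k\le n/30$ is tailored for. Because the target $\xi=\tfrac{1}{6}$ is far below the $\tfrac12+\varepsilon$ threshold of Lemma~\ref{lem_halfexpander}, the delicate symmetric-difference estimate of Claim~\ref{cla:plusandminus} is not needed here: a single global expansion step after the geometric phase already delivers the required size.
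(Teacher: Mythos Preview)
Your overall architecture---upgrade the special frame to a matched one via Hall, check connectivity, then run a nested rotation iteration $S_0\subseteq S_1\subseteq\cdots$ from the $V_2$-endpoint using only pivots in $V_1''$---is exactly what the paper does, and your observations about why pivots in $V_1''$ preserve properness and why Claim~\ref{cla:plusandminus} is unnecessary are both correct.

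The one genuine problem is your plan to import the interval partition and universality bookkeeping from Lemma~\ref{lem_halfexpander}. That machinery is calibrated to the regime $r\gtrsim \varepsilon^{-3}\log n$: in Step~1 of that proof the key inequality $\sum_j (w(j)+1)=i+k\le \varepsilon^8 r/2$ is what lets you absorb the universality loss into an absolute constant, and it fails outright when $r$ is bounded (here $r$ may equal $16$) while $k$ and the number $i$ of rotation rounds are both of order $\log n$. So the claimed growth $|S_{i+1}|\ge r|S_i|/c$ with an \emph{absolute} constant $c$ cannot be extracted from the universality argument in this setting.

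The fix, and what the paper actually does, is to drop the intervals altogether. Because you are only aiming for $\xi=\tfrac16$ rather than $\tfrac12+\varepsilon$, you never need to know \emph{which} of $w^-,w^+$ becomes the new endpoint; it suffices that one of them does. Since the sets are nested, any pivot $w\in N(S_i)\cap V_1''$ with $w\notin S_i^-\cup S_i^+$ yields a new endpoint in $V_2$, and as each new endpoint arises from at most two pivots one gets directly
\[
|S_{i+1}|\ \ge\ \tfrac12\bigl(|N(S_i)\cap V_1''|-|S_i^-|-|S_i^+|\bigr)+|S_i|\ \ge\ \tfrac12\,|N(S_i)\cap V_1''|,
\]
hence $|S_{i+1}|\ge \tfrac{r}{2}|S_i|$ while $|S_i|\le n/r^{3/2}$. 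Two more applications of the expansion hypotheses then give $|S_P|\ge \tfrac12(1-\varepsilon)|V_1''|\ge \tfrac{3}{8}\bigl(\tfrac{n}{2}-\tfrac{3k}{2}\bigr)\ge \tfrac{n}{6}$. Note the factor $\tfrac12$ from the multiplicity count, which is missing from your final numerical estimate; with it the inequality still closes comfortably under $k\le n/30$ and $\varepsilon\le \tfrac14$.
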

\begin{proof}
Let $G$ be the given bipartite-expander with parameters
$\varepsilon$ and $r$, and let $(V_1, V_2, S_V, S_E)$ be its special
frame. Throughout the proof, we consider only the edges that belong
to the framed subgraph and the special edges. Given properties (i)
and (ii) of Definition~\ref{def:bip-expander}, it follows that every
connected component of the framed subgraph of $G$ contains at least
$\frac{3}{4}|V_2|$ vertices of $V_2$. Consequently, every two
connected components intersect, and the framed subgraph of $G$ is
connected. We then verify Hall's condition to find a matching $f$
between the vertices of $V_1'$ and $V_2$. By property (i) of
bipartite-expanders, we have $|N(X) \cap V_2| \ge |X|$ for every
subset $X$ of $V_1$ of size at most $\frac{3}{4}|V_2|$. Then by
property (ii), we have $|N(Y) \cap V_1'| \ge |N(Y) \cap V_1''| \ge
|Y|$ for every subset $Y$ of $V_2$ of size at most
$\frac{3}{4}|V_1''|$. Now suppose that Hall's condition is not
satisfied, and that there is some subset $X$ of $V_1'$ of size larger than
$\frac{3}{4}|V_2|$ that satisfies $|X| > |N(X)| \ge
\frac{3}{4}|V_2|$ (the second inequality follows from the fact we
established above for subsets of size at most $\frac{3}{4}|V_2|$).
Then it implies that all the neighbors  of $V_2 \setminus
N(X)$  in $V_1'$ belong to $V_1' \setminus X$. However, since
\[ |V_2 \setminus N(X)| = |V_2| - |N(X)| \le \frac{1}{4}|V_2| < \frac{3}{4}|V_1''|, \]
(we used the fact $k \le \frac{n}{30}$ in the last inequality) we
thus must have $|V_2 \setminus N(X)| \le |N(V_2 \setminus N(X))| \le
|V_1' \setminus X|$ which
is a contradiction since $|V_1' \setminus X| = |V_1'| - |X|$ and
$|V_2 \setminus N(X)| = |V_2| - |N(X)| > |V_2| - |X|$. Thus Hall's
condition holds, and we may let $f$ be one fixed perfect matching
between the vertices of $V_1'$ and $V_2$. Consider the matched
special frame $(V_1, V_2, S_V, S_E, f)$.

Let us now focus on verifying the remaining condition given in
Definition~\ref{def:bip-rotation}. Let $P=(v_0,\cdots,v_\ell)$ be a
proper path of length $\ell$ where $v_0\in V_{2}$ (recall that one
endpoint of every proper path is in $V_2$). For a subset $X$ of
vertices of $P$, we use notations $X^-$ and $X^+$ as in the proof of
Lemma \ref{lem_halfexpander}. We will first construct the set $S_P$.
This will be done by iteratively constructing sets $S_{i}\subset
V_{2}$ for $i\ge0$ where $S_i$ has the property that for every $v\in
S_{i}$, there exists a proper path of length $\ell$ which starts at
$v$ and ends at $v_\ell$. Moreover, $S_{i} \subset S_{i+1}$ will
hold for every $i \ge 0$.

Let $S_{0}=\{v_0\}$. Assume that we have completed constructing
$S_{i}$ for some $i\ge0$. We first claim that $N(S_{i})\cap V_{1}
\subset V(P)$. If the claim does not hold, then we have an edge
$\{x,y\}$ for $x\in S_{i},\: y\in V_1\setminus V(P)$. If $y$ is not
a special vertex, then we can find a proper path longer than $P$ by
attaching the path $(f(y),y,x)$ to the proper path of length $\ell$
starting at $x$ and ending at $v_\ell$ (note that $f(y)$ is not in
$P$ by the definition of a proper path). On the other hand, if $y$
is a special vertex, then let $\{y,y'\}$ be the special edge
incident to $y$. If $y' \in V(P)$, then the edges $\{x,y\}$ and $\{y,y'\}$, together
with $P$ immediately give a proper path longer than $P$. Finally,
if $y' \notin V(P)$, then we can use the path $(x,y,y',f(y'))$ to
find a longer proper path than $P$ ($f(y')$ is not in $P$ by the
definition of a proper path). Thus we indeed must have
$N(S_i) \cap V_1 \subset V(P)$.

Now, if a vertex $w \in N(S_i) \cap V_1''$ is not in $S_{i}^-$ or
$S_{i}^+$, then $w$ can be used as a pivot point to give either
$w^-$ or $w^+$ as a new endpoint of some proper path that has
$v_\ell$ as its other endpoint (recall that $S_i$ contains the sets
$S_j$ for $j \le i$). Note that it is crucial to consider the set
$V_1''$ as otherwise we might end up breaking some special edges of
the path $P$ and the resulting path will no longer be proper.
Let $S_{i+1}$ be the union of the set of endpoints
obtained in this way and the set $S_i$. Since an endpoint can be
obtained in at most two ways, we have \[
|S_{i+1}|\ge\frac{1}{2}\Big(|N(S_{i}) \cap
V''_1|-|S_{i}^{+}|-|S_{i}^{-}|\Big) + |S_i| \ge \frac{1}{2}|N(S_{i})
\cap V_1''|.
\] If $|S_{i}|\le \frac{n}{r^{3/2}}$, then we have $|N(S_{i})\cap V_1''|\ge
r|S_i|$ and therefore, $|S_{i+1}|\ge\frac{r}{2}\cdot|S_{i}|$. Since
$r \ge 16$, at some point $t$, $S_{t}$ will have size $|S_{t}|\ge
\frac{n}{r^{3/2}}$. Redefine $S_{t}$ as an arbitrary subset of it of
size $\frac{n}{r^{3/2}}$ and repeat the above once more to get
$|S_{t+1}|\ge\frac{r}{2}|S_{t}|\ge \frac{n}{r^{3/4}}$ (recall that $r \ge 16$).
Again redefine $S_{t+1}$ as an arbitrary subset of it of size
$\frac{n}{r^{3/4}}$, and repeat the above for the final time, to get
a set $S_{P}=S_{t+2}$ of size
\[ |S_{P}|\ge \frac{1}{2} \cdot (1-\varepsilon)|V_1''| \ge
\frac{3}{8}\left(\frac{n}{2} - \frac{3k}{2}\right) \ge \frac{n}{6},
\] where we used the fact that $\varepsilon \le \frac{1}{4}$ and $k \le \frac{n}{30}$.
For each vertex $v\in S_{P}$, we can perform the same process as
above to the other endpoint $v_\ell$ to find a set $T_{v}$ of size
at least $\frac{n}{6}$ satisfying the property of Definition
\ref{def:bip-rotation}.
\end{proof}

In fact, similar arguments also apply to non-bipartite graphs.

\begin{defn}
\label{def:realexpander} For positive constants $\varepsilon$ and
$r$, we say that a graph $G$ on $n$ vertices
is an {\em expander with parameters
$\varepsilon$ and $r$}, if the following properties hold:
\begin{enumerate}[(i)]
  \setlength{\itemsep}{1pt} \setlength{\parskip}{0pt}
  \setlength{\parsep}{0pt}
\item For every subset of vertices $X$ of size $|X|\le \frac{n}{r^{3/2}}$, we have $|N(X)|\ge
r|X|$, and
\item for every subset of vertices $X$ of size $|X|\ge \frac{n}{r^{3/4}}$, we have $|N(X)|\ge
(1-\varepsilon)n$.
\end{enumerate}
\end{defn}

The proof of Theorem \ref{lem:bipexpander_rotationextension} given
above can be easily modified to give the following result.

\begin{lem} \label{lem:expander_rotationextension}
For every positive reals $r \ge 16$ and $\varepsilon \le
\frac{1}{4}$, every expander with parameters $\varepsilon$ and $r$
has property $\mathcal{RE}(\frac{1}{6})$.
\end{lem}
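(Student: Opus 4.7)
The plan is to mirror the proof of Lemma \ref{lem:bipexpander_rotationextension} essentially verbatim, replacing the role of the excluded set $V_1''$ (used there to avoid breaking special edges during rotations) by a constant-size forbidden set $F$ that prevents the single fixed edge $e_f$ from being broken. The absence of a bipartite structure in fact makes the argument marginally simpler.

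First I would verify connectivity of $G$: starting from any vertex and iterating property (i) of Definition \ref{def:realexpander}, the connected component grows geometrically at rate $r$ until it reaches size $n/r^{3/4}$, after which property (ii) forces it to contain at least $(1-\varepsilon)n \ge 3n/4$ vertices; any two such large components must intersect. Next, given a path $P = (v_0, \ldots, v_\ell)$ with fixed edge $e_f = \{v_f, v_{f+1}\}$, set $F = \{v_{f-1}, v_f, v_{f+1}, v_{f+2}\}$ and assume that clause (i) of Definition \ref{def:rotation} fails, so $P$ is a longest path through $e_f$ in $G \cup P$. I would iteratively build nested sets $S_0 \subseteq S_1 \subseteq \cdots \subseteq V(P)$, where $S_i$ is the collection of vertices $v$ admitting a length-$\ell$ path through $e_f$ from $v$ to $v_\ell$, starting from $S_0 = \{v_0\}$. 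At each step the maximality of $P$ forces $N(S_i) \subseteq V(P)$, and every pivot $w \in N(S_i) \setminus (F \cup S_i^- \cup S_i^+)$ contributes either $w^-$ or $w^+$ as a new endpoint without breaking $e_f$, giving
\[ |S_{i+1}| \ge \tfrac{1}{2}\bigl(|N(S_i)| - |S_i^-| - |S_i^+| - |F|\bigr) + |S_i|. \]

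As long as $|S_i| \le n/r^{3/2}$, property (i) yields $|N(S_i)| \ge r|S_i|$ and hence $|S_{i+1}| \ge (r/4)|S_i|$ once the additive constant $|F|$ is absorbed, which holds for $r \ge 16$. After $O(\log n)$ rounds some $S_t$ exceeds $n/r^{3/2}$; truncate $S_t$ to that size, perform one more round to bridge to size $n/r^{3/4}$ (the exact gap where $r \ge 16$ is needed), truncate again, and apply the final round using property (ii) so that $|N(S_{t+1})| \ge (1-\varepsilon)n$. Combined with $|F| \le 4$ and $|S_{t+1}^{\pm}| \le |S_{t+1}|$, the growth inequality then gives $|S_P| \ge \tfrac{1}{2}(1-\varepsilon)n - 2 \ge \tfrac{3}{8}n - 2 \ge n/6$, using $\varepsilon \le 1/4$. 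Repeating the entire construction with the roles of $v_0$ and $v_\ell$ reversed produces, for each $v \in S_P$, a set $T_v$ of size at least $n/6$, establishing $\mathcal{RE}(1/6)$.

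I do not anticipate a genuine obstacle, since $|F|$ is an absolute constant and its subtraction is absorbed harmlessly into the growth estimate. The main care is bookkeeping: when a pivot $w \notin F$ is used, one must ensure that neither orientation of the resulting rotation produces $e_f$ as the broken edge, and the four-vertex buffer $F$ is chosen precisely to rule this out regardless of orientation.
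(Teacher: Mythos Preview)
Your proposal is correct and is precisely the modification the paper intends: the paper gives no separate argument for Lemma~\ref{lem:expander_rotationextension} but simply states that the proof of Lemma~\ref{lem:bipexpander_rotationextension} ``can be easily modified,'' and what you describe is exactly that modification. Replacing the excluded set $V_1''$ (which in the bipartite case protects the special edges) by the four-vertex set $F=\{v_{f-1},v_f,v_{f+1},v_{f+2}\}$ to protect the fixed edge $e_f$, and otherwise running the same nested-rotation growth argument with the two truncations and the final application of property~(ii), is the natural translation.

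One small bookkeeping point to watch when you write it out in full: in the bipartite proof the pivots lie in $V_1$ while the endpoint sets $S_i$ lie in $V_2$, so a pivot $w$ is automatically outside $S_i$, and this is what guarantees that the original edges $\{w^-,w\}$ and $\{w,w^+\}$ remain unbroken on the current path. In the non-bipartite setting that separation is absent, so to make the claim ``pivoting at $w$ yields $w^-$ or $w^+$'' rigorous you should also exclude $S_i$ itself from the pivot set (as is done in Step~1 of Lemma~\ref{lem_halfexpander}). This costs an additional $|S_i|$ in the growth inequality, which is absorbed harmlessly into the geometric growth.
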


\section{Random subgraphs of Dirac graphs}

In this section we prove Theorem~\ref{thm_mainrandom}.
The following well-known concentration result (see, for example
\cite{AlSp}, Appendix A) will be used several times throughout the
proof. We denote by $Bi(n,p)$ a binomial random variable with
parameters $n$ and $p$.
\begin{thm}[Chernoff's Inequality] \label{thm_chernoffinequality}
If $X \sim Bi(n,p)$ and $\lambda \leq np$, then
\[ P \big( |X - np| \geq \lambda \big) \leq e^{-\Omega(\lambda^2/(np))}. \]
\end{thm}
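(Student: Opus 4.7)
I would prove the bound by the standard exponential moment (Chernoff) method, treating the upper and lower tails separately and combining them by a union bound. The underlying idea is to convert the tail probability into an expectation of an exponential, exploit independence to turn it into a product, and then optimize the free parameter.

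First, writing $X = \sum_{i=1}^n X_i$ as a sum of independent Bernoulli$(p)$ variables, for any $t > 0$ Markov's inequality applied to $e^{tX}$ yields
\[ P(X \geq np + \lambda) \;\leq\; \frac{E[e^{tX}]}{e^{t(np+\lambda)}} \;=\; \frac{(1-p+pe^t)^n}{e^{t(np+\lambda)}}. \]
Applying the elementary inequality $1+y \leq e^y$ gives $1-p+pe^t \leq \exp(p(e^t-1))$, so the right-hand side is at most $\exp\bigl(np(e^t-1) - t(np+\lambda)\bigr)$.

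Second, I would optimize over $t$. Setting $t = \log(1+\lambda/(np))$ (which is positive since $\lambda \geq 0$) reduces the exponent to $-np \cdot \varphi(\lambda/(np))$, where $\varphi(x) := (1+x)\log(1+x) - x$. The assumption $\lambda \leq np$ restricts us to $x \in [0,1]$, and on this range an elementary calculus check (comparing $\varphi$ with $x^2/3$ via their values and derivatives at $0$ and concavity/monotonicity arguments) gives $\varphi(x) \geq x^2/3$. Hence $np \cdot \varphi(\lambda/(np)) \geq \lambda^2/(3np)$, yielding the upper-tail estimate $P(X \geq np + \lambda) \leq \exp(-\lambda^2/(3np))$.

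Third, for the lower tail $P(X \leq np - \lambda)$ I would run the analogous argument applied to $e^{-tX}$ with $t>0$, obtaining an exponent of the form $-np \cdot \psi(\lambda/(np))$ with $\psi(x) := (1-x)\log(1-x) + x$; this is again bounded below by $x^2/3$ on $[0,1]$. Summing the two tails and absorbing the constant $1/3$ into the $\Omega(\cdot)$ notation gives the claim. The main technical point is the inequality $\varphi(x) \geq x^2/3$ on $[0,1]$, and this is precisely where the hypothesis $\lambda \leq np$ enters: for $\lambda \gg np$ one would be forced into the regime $x \gg 1$, where the tail decays only like $e^{-\Omega(\lambda \log(\lambda/(np)))}$ rather than $e^{-\Omega(\lambda^2/(np))}$, so the stated Gaussian-type bound no longer holds in that range.
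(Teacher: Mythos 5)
Your derivation is the standard exponential-moment (Chernoff) argument and is correct, including the key point that $\lambda \le np$ is what permits the quadratic bound $(1+x)\log(1+x)-x \ge x^2/3$ on $[0,1]$ and hence the Gaussian-type exponent. The paper does not prove this theorem at all — it is quoted as a well-known result with a reference to Alon and Spencer, Appendix A — and your argument is essentially the one given in that standard reference, so there is nothing further to compare.
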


We begin by applying Lemma~\ref{lem:dirac} with $\alpha =
\frac{1}{2^{23}}$ and $\gamma = \frac{1}{32}$ to classify Dirac
graphs into three types. We will show case by case that a random
subgraph of a Dirac graph of each type is a.a.s.~Hamiltonian.

\subsection{First case}

Let $G$ be a Dirac graph satisfying $e(A,B)\ge\alpha n^{2}$ for all half-sets $A$ and $B$.

\begin{lem} \label{lem_case1_lem1}
There exist positive reals $\varepsilon$
and $\beta$ such that for every $p\ge \frac{\beta \log n}{n}$
the graph $G_{p}$ a.a.s. contains a subgraph with property
$\mathcal{RE}(\frac{1}{2}+\varepsilon)$ that has at most $\beta n \log n$ edges.
\end{lem}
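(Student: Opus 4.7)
My plan is to apply Lemma~\ref{lem_halfexpander}, which reduces the task to exhibiting inside $G_p$ a half-expander subgraph with parameters $\varepsilon$ and $r = 16\varepsilon^{-3}\log n$ that has at most $\beta n \log n$ edges. Fix a small constant $\varepsilon$ (depending on both $\varepsilon_0$ of Lemma~\ref{lem_halfexpander} and on $\alpha$), and take $p' = \beta_0 \log n / n$ for a sufficiently large constant $\beta_0 = \beta_0(\varepsilon)$. If $\beta \ge \beta_0$, then $G_{p'}$ can be realized as a subgraph of $G_p$ via an independent further sub-sampling step, and $|E(G_{p'})| \le \beta n \log n$ holds asymptotically almost surely by Chernoff.

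The key feature driving every Chernoff calculation below is the following: for any fixed vertex set $X$, the indicator variables $\{I_v : v \in V\setminus X\}$, where $I_v = \mathbf{1}[v \in N_{G_{p'}}(X)]$, are \emph{mutually independent} since $I_v$ is determined by the random statuses of the edges from $v$ to $X$ in $G$, and these edge sets are pairwise disjoint. Hence $|N_{G_{p'}}(X)| = \sum_v I_v$ is a sum of independent Bernoulli variables that concentrates around its mean by Chernoff's inequality.

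For property (i), with $|X| = k \le \varepsilon n/r$, the Dirac degree condition gives $e_G(X, V\setminus X) \ge k(n/2-k) \ge kn/3$. Since $p' k \le \beta_0\varepsilon^4/16$ is small, the estimate $1 - (1-p')^d \ge p'd/2$ (valid when $p'd \le 1$) yields
\[
\mathbb{E}\bigl[|N_{G_{p'}}(X)|\bigr] \;\ge\; \tfrac{p'}{2}\,e_G(X, V \setminus X) \;\ge\; \tfrac{\beta_0 k \log n}{6}.
\]
For $\beta_0 \gtrsim \varepsilon^{-3}$ this mean exceeds $2rk$, and Chernoff plus the union bound $\binom{n}{k} \le n^k$ summed over $k$ makes the failure probability $o(1)$. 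Property (iii) is even easier: for disjoint $X, Y$ each of size at least $(1/2 - \varepsilon^{1/5})n$, extending to half-sets $A \supseteq X$, $B \supseteq Y$ and invoking the Case~1 hypothesis $e_G(A, B) \ge \alpha n^2$ gives, after bounding boundary contributions by $3\varepsilon^{1/5}n^2 \le \alpha n^2/2$ (provided $\varepsilon \le (\alpha/6)^5$), that $e_G(X,Y) \ge \alpha n^2/2$; Chernoff then gives $P\bigl[e_{G_{p'}}(X,Y) < 2n\bigr] \le e^{-\Omega(n\log n)}$, which easily dominates the $\le 4^n$ pairs in the union bound.

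The main technical obstacle is property (ii): for $X$ with $|X| \ge n/(\varepsilon r) = \Theta(n/\log n)$, the direct mean bound is too weak because most vertices $v \in V \setminus X$ may have $d_X(v)$ only of order $|X|$, giving each $I_v$ probability only a constant (not $1-o(1)$) of being $1$. My strategy is to \emph{bootstrap} property (i): applying it to a subset $X' \subseteq X$ of size $\varepsilon n/r$ immediately produces $|N_{G_{p'}}(X')| \ge \varepsilon n$, and iterating the independent-Bernoullis Chernoff argument on the growing set (for which the Case~1 hypothesis, via suitable half-set extensions, forces a linear lower bound on $e_G(\,\cdot\,, \,\cdot\,)$ between the bootstrapped set and any ``missing'' portion of $V\setminus X$) amplifies the neighborhood to size at least $(1/2-\varepsilon)n$ in a constant number of rounds. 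Assembling the three verifications together with Lemma~\ref{lem_halfexpander} then delivers the claimed property $\mathcal{RE}(\tfrac{1}{2}+\varepsilon)$ subgraph, completing the proof.
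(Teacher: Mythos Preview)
Your overall architecture matches the paper's: pass to $p' = \beta_0 \log n/n$ by subsampling, invoke Lemma~\ref{lem_halfexpander}, and verify that $G_{p'}$ is a half-expander with parameters $\varepsilon$ and some $r \ge 16\varepsilon^{-3}\log n$. Your treatment of property~(iii) is essentially the paper's (extend to half-sets, use the Case~1 hypothesis, apply Chernoff). For property~(i) you take a slightly different route than the paper---you bound $|N_{G_{p'}}(X)|$ directly as a sum of independent Bernoulli variables, whereas the paper combines a minimum-degree bound with an edge-count bound $e_{G_{p'}}(X,Y) \le \frac{1}{4}|X|np'$ for all small $Y$---but your argument is correct, provided you keep $\beta_0 = \Theta(\varepsilon^{-3})$ so that both $p'k \le 1$ and $\mathbb{E}|N(X)| \ge 2rk$ hold simultaneously.

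The genuine gap is in your handling of property~(ii). Your bootstrap is not a proof as written: applying~(i) to a subset $X' \subseteq X$ of size $\varepsilon n/r$ gives only $|N_{G_{p'}}(X)| \ge \varepsilon n$, and it is unclear what the ``iteration'' is or what set is ``growing''. Crucially, the Case~1 hypothesis concerns \emph{half-sets}, whereas here $|X| = \Theta(n/\log n)$; extending $X$ to a half-set destroys the link between $e_G(\text{half-set},\cdot)$ and $|N_{G_{p'}}(X)|$, so ``suitable half-set extensions'' cannot do the work you assign them. In fact no iteration is needed. The paper's argument is a single step that uses only the Dirac minimum degree: if $|N_{G_{p'}}(X)| < (\tfrac12 - \varepsilon)n$, set $Y = V \setminus N_{G_{p'}}(X)$, so $|Y| \ge (\tfrac12+\varepsilon)n$ and there are no $G_{p'}$-edges from $X$ to $Y$. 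Since every vertex of $X$ has at least $n/2$ neighbours in $G$ and at most $(\tfrac12-\varepsilon)n$ of them lie outside $Y$, one gets $e_G(X,Y) \ge \tfrac12 |X|\varepsilon n \ge n^2/(2r)$; hence $\mathbb{P}[e_{G_{p'}}(X,Y)=0] \le (1-p')^{n^2/(2r)} \le e^{-\Omega_\varepsilon(\beta_0 n)}$, which for $\beta_0$ large enough beats the $2^{2n}$ union bound over pairs $(X,Y)$. Replace your bootstrap with this one-line argument and the proof is complete.
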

\begin{proof}
Let $\varepsilon = \min\{\varepsilon_{\ref{lem_halfexpander}},
(\frac{\alpha}{4})^{5}, \frac{1}{2^4e^2}\}$, and $\beta \ge 512\varepsilon^{-3}$ be a large
constant. 
Let $p' = \frac{\beta \log n}{n}$ and suppose that we are
given $p \ge p'$. Let $H$ be a random subgraph of $G_p$ obtained by
taking each edge independently with probability $\frac{p'}{p}$, and
note that the distribution of $H$ is identical to that of $G_{p'}$.
Since $G_{p'}$ a.a.s. has at most $(1+o(1))e(G) p' \le \beta n \log
n$ edges, it suffices to show that $G_{p'}$ a.a.s.~has property
$\mathcal{RE}(\frac{1}{2} + \varepsilon)$. By Lemma
\ref{lem_halfexpander}, we can prove our claim by verifying that
$G_{p'}$ is a.a.s.~a half-expander with parameters
$\varepsilon$ and $r = \frac{\beta}{32}\log n = \frac{np'}{32}$.

We will establish the following four properties of $G_{p'}$
which together will imply that $G_{p'}$
is a.a.s.~a half-expander with parameters $\varepsilon$ and $r$
(note that $n p' = \beta \log n$).

\begin{enumerate}
  \setlength{\itemsep}{1pt} \setlength{\parskip}{0pt}
  \setlength{\parsep}{0pt}
\item $G_{p'}$ a.a.s.~has minimum degree at least $\frac{1}{3}np'$.
\item For every pair of sets $X$ and $Y$ satisfying $|X| \leq  \frac{\varepsilon n}{r}$ and $|Y| \le r|X|$, we
a.a.s.~have $e_{G_{p'}}(X,Y) \le \frac{1}{4}|X|np'$.
\item For every pair of sets $X$ and $Y$ of size $|X| \ge \frac{n}{\varepsilon r}$ and $|Y| \ge (\frac{1}{2}+\varepsilon)n$,
$G_{p'}$ a.a.s.~contains at least one edge between $X$ and $Y$.
\item For every pair of sets $X$ and $Y$ of size $|X|, |Y| \ge (\frac{1}{2} - \varepsilon^{1/5})n$,
$G_{p'}$ a.a.s.~contains at least $2n+1$ edges between $X$ and $Y$.
\end{enumerate}
Indeed, suppose that all four properties hold. Let $X$ be a set of
size at most $\frac{\varepsilon n}{r}$ and assume that $|N(X)| \le r|X|$.
For the set $Y = N(X)$, by Properties 1 and 2, we must
have
\[ \frac{1}{4} |X|np' \ge e_{G_{p'}}(X, Y) = e_{G_{p'}}(X, V(G))\ge |X| \cdot \frac{1}{3}np' \]
which is a contradiction. Thus Condition (i) of Definition
\ref{def:expander} holds. Now let $Z$ be a set of size at least
$\frac{n}{\varepsilon r}$ and assume that $|N(Z)| \le (\frac{1}{2} -
\varepsilon)n$. If this is the case, then there are no edges between
$Z$ and $V \setminus N(Z)$ in $G_{p'}$, and this contradicts Property 3 since $|V
\setminus N(Z)| \ge (\frac{1}{2} +\varepsilon)n$. This establishes
Condition (ii) of Definition \ref{def:expander}. Finally, it is easy
to see that Property 4 implies Condition (iii) of Definition
\ref{def:expander}.

%To prove that $G_{p'}$ is a.a.s.~connected, take
%two vertices $v$ and $w$. By Properties 1,2, and 3, there exist sets
%$A_v$ and $A_w$ each of size at least $(\frac{1}{2}-\varepsilon)n$
%such that the connected component containing $v$ contains $A_v$, and
%the connected component containing $w$ contains $A_w$. There exists
%an edge between $A_v$ and $A_w$ by Property 4. Consequently, there
%exists a path between $v$ and $w$ in $G_{p'}$ for all pairs of
%vertices $v$ and $w$.

Now we establish the four properties listed above. Since $G$ has
minimum degree at least $\frac{n}{2}$, by Chernoff's inequality, the
probability of a fixed vertex having degree less
$\frac{1}{3}np'$ is at most $e^{-\Omega(np')}= o(n^{-1})$ for large
enough $\beta$. By taking the union bound, we obtain Property 1. For
Property 2, let $X$ and $Y$ be two sets of size $|X|= k$ and
$|Y|=rk$ (it suffices to prove for $Y$ exactly of size $rk$). To
estimate the probability of $e_{G_{p'}}(X,Y) \ge \frac{knp'}{4}$, we
may estimate the probability that at least $\frac{knp'}{8}$
non-ordered pairs of the form $\{v,w\}$ for $v \in X, w\in Y$,
become edges of $G_{p'}$. Consequently, the probability of having
$e_{G_{p'}}(X,Y) \ge \frac{knp'}{4}$ is at most
\[ {rk^2 \choose knp'/8} (p')^{knp'/8}. \] Therefore by the union
bound, the probability of having two sets violating Property 2 is at
most
\begin{align*}
\sum_{k=1}^{\varepsilon n/r} {n \choose k}{n \choose rk} {rk^2 \choose knp'/8} (p')^{knp'/8}
&\le \sum_{k=1}^{\varepsilon n/r} {n \choose k} \left( \frac{en}{rk} \right)^{rk} \left(\frac{8erk}{n} \right)^{knp'/8} \\
&= \sum_{k=1}^{\varepsilon n/r} {n \choose k} \left( \frac{en}{rk} \big(\frac{8erk}{n}\big)^4 \right)^{rk}
= \sum_{k=1}^{\varepsilon n/r} {n \choose k} \left( \frac{2^{12}e^5r^3k^3}{n^3} \right)^{rk} \\
&\le \sum_{k=1}^{\varepsilon n/r} {n \choose k} \left( \varepsilon^3 2^{12}e^5 \right)^{rk} \\
&\le \sum_{k=1}^{\varepsilon n/r} \left( \frac{en}{k} \Big(\varepsilon^3 2^{12}e^5\Big)^r \right)^{k},
\end{align*}
which is $o(1)$ for large enough $\beta$ since $r = \frac{\beta}{32} \log n=\frac{np'}{32}$,
and $\varepsilon \le 2^{-4}e^{-2}$.

To establish Property 3, let $X$ and $Y$ be two sets as in Property
3. Since the graph $G$ has minimum degree at least $n/2$, the number
of edges in $G$ between $X$ and $Y$ is at least
\[ \frac{1}{2}|X| \cdot \varepsilon n \ge \frac{n^2}{2r}. \]
%Therefore the expected number of edges in $G_{p'}$ between
%$X$ and $Y$ is at least $\frac{\varepsilon n^2}{2r^{3/4}} \cdot \frac{\beta \log n}{n} \ge \varepsilon n %\log^{1/4} n$.
Consequently the probability of Property 3 not holding is at most
\[ 2^{2n} \cdot (1-p')^{n^2/(2r)} \le 2^{2n} e^{-n^2p'/(2r)} = 2^{2n} e^{-16n} = o(1) \]
(we used the fact that $p' = \frac{\beta \log n}{n}$ and $r =
\frac{\beta}{32}\log n$).

It now remains to verify Property 4. It suffices to consider a pair of
sets $X$ and $Y$ which both have size exactly $(\frac{1}{2} -
\varepsilon^{1/5})n$. Let $X'$ be an arbitrary set of vertices of
size $\frac{n}{2}$ that contains $X$, and similarly define $Y'$.
Then the graph $G$ satisfies $e_G(X',Y') \ge \alpha n^2$.
Therefore we have
\[ e_{G}(X, Y) \ge e_G(X',Y') - |X'\setminus X| \cdot n - |Y'\setminus Y| \cdot n \ge
  \alpha n^2 - 2\varepsilon^{1/5} n^2 \ge \frac{\alpha n^2}{2}, \]
and thus by Chernoff's inequality, the probability that $G_{p'}$ has
less than $\frac{\alpha n^2p'}{4}$ edges between $X$ and $Y$ is at most
$e^{-\Omega(\alpha n^2p')} = e^{-\Omega(\alpha \beta n\log n)}$. Thus by taking the
union bound over all possible choices of $X$ and $Y$, we have
Property 4.
\end{proof}

\begin{lem} \label{lem_case1_lem2}
For every fixed positive reals $\varepsilon$ and $\beta$, there
exists a constant $C=C(\varepsilon, \beta)$ such that the following
holds for every $p\ge \frac{C\log n}{n}$: $G_{p}$ a.a.s. complements
every its subgraph with property
$\mathcal{RE}(\frac{1}{2}+\varepsilon)$ that has at most $\beta n
\log n$ edges.
\end{lem}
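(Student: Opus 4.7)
The plan is a union bound over pairs $(G_1,P)$, using that edges of $G_p$ are mutually independent so that the events ``$G_1\subseteq G_p$'' and ``no closing edge lies in $E(G_p)\setminus E(G_1)$'' decouple. The crucial feature is the sparsity hypothesis $|E(G_1)|\le\beta n\log n$: it injects a factor $p^{|E(G_1)|}$ into the union bound that tames the combinatorial growth, which otherwise would force a non-constant $C$.

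First, fix a pair $(G_1,P)$ with $k:=|E(G_1)|\le\beta n\log n$, $G_1\in\mathcal{RE}(\tfrac12+\varepsilon)$, and $P$ a path with a fixed edge $e$ for which condition (i) of Definition~\ref{def:extension} fails. I would take $S_P\subseteq V$ to be the \emph{maximal} set of vertices $v$ satisfying the condition of Definition~\ref{def:rotation}, and for each $v\in S_P$ take $T_v$ to be the analogous maximal set for the other endpoint. These sets are functions of $(G_1,P)$ alone and have size at least $(\tfrac12+\varepsilon)n$ by hypothesis, and it suffices to produce a closing edge for this maximal choice: any smaller valid choice $(S_P',\{T_v'\})\subseteq(S_P,\{T_v\})$ yields fewer candidate edges, so maximality only makes the event easier to satisfy.

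Next, count the number $\mu$ of \emph{potential closing edges} $(v,w)\in E(G)\setminus E(G_1)$ with $v\in S_P$ and $w\in T_v$. Since $|T_v|\ge(\tfrac12+\varepsilon)n$ and $|N_G(v)|\ge n/2$, we have $|T_v\cap N_G(v)|\ge\varepsilon n$ for every $v\in S_P$. Summing over $S_P$ gives at least $(\tfrac12+\varepsilon)\varepsilon n^2$ potential edges in $E(G)$, and subtracting the at most $k\le\beta n\log n$ that lie in $E(G_1)$ yields $\mu\ge\varepsilon n^2/2$ for $n$ large. Since the edges of $G_p$ in $E(G_1)$ and those outside $E(G_1)$ are independent,
\[
\Pr\!\bigl[G_1\subseteq G_p\text{ and no closing edge in }G_p\bigr]\;\le\;p^{k}(1-p)^{\mu}\;\le\;p^{k}\exp\!\bigl(-\varepsilon Cn\log n/2\bigr).
\]

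Finally, union bound over $(G_1,P)$. There are at most $\binom{n^2}{k}$ graphs $G_1$ with $k$ edges and at most $n\cdot n!$ paths $P$ with a fixed edge. Using $\binom{n^2}{k}p^{k}\le(eCn\log n/k)^{k}$, which for $k\le\beta n\log n$ and $C$ large enough is maximized at $k=\beta n\log n$ and gives $(eC/\beta)^{\beta n\log n}$, together with $n!\le e^{n\log n}$, the total is bounded by
\[
\Pr[\text{BAD}]\;\le\;\exp\!\Bigl(\bigl(1+\beta\log(eC/\beta)-\varepsilon C/2\bigr)n\log n+O(n)\Bigr).
\]
Since $\varepsilon C/2$ grows linearly in $C$ while $\beta\log(eC/\beta)$ grows only logarithmically, choosing $C=C(\varepsilon,\beta)$ large enough makes the exponent at most $-\Omega(n\log n)$, yielding the desired $o(1)$ bound.

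\emph{Main obstacle.} The delicate point is that the union bound closes only because $G_1\subseteq G_p$; without this, one would have to union bound over all graphs on $V$ with at most $\beta n\log n$ edges, of which there are $2^{\Theta(n\log^2n)}$, a combinatorial cost that cannot be absorbed by the per-event failure probability $e^{-\Omega(Cn\log n)}$ for any \emph{constant} $C$. The factor $p^{|E(G_1)|}$ coming from ``$G_1\subseteq G_p$'' is exactly what reduces the effective number of candidates to roughly $2^{O(n\log n)}$ and makes $C=C(\varepsilon,\beta)$ constant in $n$ achievable.
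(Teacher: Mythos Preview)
Your approach is essentially identical to the paper's: both arguments exploit the independence to factor $\Pr[R\subset G_p]$ (giving the crucial $p^{|E(R)|}$), count $\Omega_\varepsilon(n^2)$ potential closing edges via the minimum-degree condition, and then union bound over all sparse $R$ and all paths $P$ with a fixed edge. A few minor slips---the number of (path, fixed edge) pairs is $n^2\cdot n!$ rather than $n\cdot n!$ (an extra factor of $n$ for the choice of fixed edge), your $(\tfrac12+\varepsilon)\varepsilon n^2$ count of closing pairs may double count and should be halved as the paper does, and your final displayed bound implicitly sets $p=C\log n/n$ whereas for general $p\ge C\log n/n$ one should keep the $np$-dependence and observe that $e^{-\Theta(\varepsilon n^2 p)}$ still beats $(enp/(\beta\log n))^{\beta n\log n}$---are cosmetic and do not affect validity.
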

\begin{proof}
Let $C \ge \beta$ be a sufficiently large constant. The probability
that the assertion of the lemma fails is
\begin{align}
\mathbf{P} &=& \mathbf{P}\Big(\bigcup_{R\in\mathcal{RE}(\frac{1}{2} + \varepsilon), |E(R)| \leq \beta n \log n}\Big(\{R\subset G_p\} \wedge \{\textrm{$G_p$ does not complement $R$}\}\Big)\Big)  \nonumber \\
&\leq&  \sum_{R\in\mathcal{RE}(\frac{1}{2} + \varepsilon), |E(R)| \leq \beta n \log n}\mathbf{P}\Big(\textrm{$G_p$ does not complement $R$}\,|\, R\subset G_p\Big)\cdot
\mathbf{P}(R\subset G_p) \label{eq:eq1},
\end{align}
where the union (and sum) is taken over all graphs $R$
on $n$ vertices which have property $\mathcal{RE}(\frac{1}{2} + \varepsilon)$
and at most $\beta n \log n$ edges.

Let us first examine the term $\mathbf{P}\Big(\textrm{$G_p$ does not
complement $R$} \,|\, R \subset G_p\Big)$. Let $R$ be a fixed graph
with property $\mathcal{RE}(\frac{1}{2} + \varepsilon)$ and $P$ be a
fixed path on the same vertex set containing some fixed edge $e$.
The number of such paths is at most $n^2 \cdot n!$, since there are
at most $n^2$ choices for the length of path $P$ and the fixed edge $e$, and
there are at most $n(n-1)\cdots(n-i+1)$ paths of length $i, 1 \leq i
\leq n$. If in $R\cup P$ there is a path longer than $P$ containing
$e$, then the condition of Definition \ref{def:extension} is already
satisfied. Therefore we can assume that there is no such path in
$R\cup P$. Then, by the definition of property
$\mathcal{RE}(\frac{1}{2} + \varepsilon)$, we can find a set $S_{P}$
and for every $v \in S_P$ a corresponding set $T_{v}$, both of size
$\big(\frac{1}{2}+\varepsilon\big)n$, such that for every $w\in
T_{v}$, there exists a path containing $e$ of the same length as $P$
in $R\cup P$ which starts at $v$ and ends at $w$. If there exist 
vertices $v\in S_{P}$ and $w\in T_{v}$ such that $\{v,w\}$ is an edge
of $R$, then this edge is also in $R \cup G_p$ and again Definition
\ref{def:extension} is satisfied. If there are no such edges of $R$,
then conditioned on $R\subset G_p$,
each such pair of vertices is an edge in $G_p$ independently with
probability $p$. By the minimum degree condition on $G$, we have
\[ e_G(S_P, T_v) \ge \varepsilon n \cdot \left(\frac{1}{2} + \varepsilon \right)n \ge \frac{\varepsilon n^2}{2}. \]
Therefore there are at least $\frac{1}{2}e_G(S_P, T_v) \ge \frac{\varepsilon n^2}{4}$
edges of $G$ that we would like to be present in the graph
(the factor $\frac{1}{2}$ comes from the fact that a same edge can be counted twice).
If $G_p$ does not complement the graph $R$, then a.a.s.~there
exists some path $P$ such that no such edge appears in $G_p$.
For a fixed path $P$, the probability of this event is at most
$(1-p)^{\varepsilon n^2/4}$. Consequently,
by taking the union bound over all choice of paths $P$, we see that
for large enough $C=C(\varepsilon)$ and $p\ge\frac{C\log n}{n}$
\[
\mathbb{P}\Big(\textrm{$G_p$ does not complement}\: R\,|\, R\subset G_p\Big)\le n^2\cdot n!\cdot (1-p)^{-\varepsilon n^2 /4}
\le e^{-\varepsilon n^2 p/8}.
\]
Therefore in \eqref{eq:eq1}, the right hand side can be bounded by
\[ \mathbf{P} \leq e^{-\varepsilon n^2 p/8}\cdot\sum_{R\in\mathcal{RE}(\frac{1}{2} + \varepsilon), |E(R)| \leq \beta n \log n}\mathbf{P}(R\subset G_p).\]
Also note that for a fixed graph $R$ with $t$ edges
$\mathbf{P}(R \subset G_p) \le \mathbf{P}( R \subset G(n,p))=p^t$. Therefore,
by taking the sum over all possible graphs $R$ with at most $\beta n \log n$
edges, we can bound the probability that the assertion of the lemma fails by
\[ \mathbf{P} \leq  e^{-\varepsilon n^2 p/8} \sum_{t=1}^{\beta n \log n}{{n \choose 2} \choose t}p^{t}
\le e^{-\varepsilon n^2 p/8} \sum_{t=1}^{\beta n \log
n}\Big(\frac{en^{2}p}{t}\Big)^{t}.\] Since $p \ge \frac{C\log
n}{n}$, for $C \ge \beta$, the summands are monotone increasing in
the range $1 \le t \le \beta n \log n$, and thus we can take the
case $t=\beta n \log n$ for an upper bound on every term. This gives
\[
\mathbf{P} \le (1+o(1)) \beta n \log n \cdot
   \left( e^{-\varepsilon n p/ (8\beta \log n)} \cdot \Big(\frac{enp}{\beta \log n}\Big) \right) ^{\beta n \log n},
\]
which is $o(1)$ for sufficiently large $C$ depending on
$\varepsilon$ and $\beta$, since $p \ge \frac{C\log n}{n}$. This
completes the proof.
\end{proof}

\begin{thm}
There exists a constant $C$ such that the
following holds for every $p\ge \frac{C\log n}{n}$.
If $G$ is a Dirac graph satisfying $(i)$ of Lemma \ref{lem:dirac},
then $G_{p}$ is a.a.s. Hamiltonian.
\end{thm}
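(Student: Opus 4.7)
The plan is to simply paste together the two preceding lemmas with Proposition \ref{prop:rotationextension}. Lemma \ref{lem_case1_lem1} supplies an $\mathcal{RE}$-subgraph inside $G_p$, Lemma \ref{lem_case1_lem2} supplies the complementing partner, and Proposition \ref{prop:rotationextension} converts the pair into a Hamilton connected (hence Hamiltonian) graph. No new probabilistic estimate is needed.

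More precisely, fix the constants $\varepsilon$ and $\beta$ produced by Lemma \ref{lem_case1_lem1}, and let $C_0 = C_0(\varepsilon, \beta)$ be the constant from Lemma \ref{lem_case1_lem2}. Set $C = \max(\beta, C_0)$ and suppose $p \ge \frac{C \log n}{n}$. By Lemma \ref{lem_case1_lem1}, a.a.s.~$G_p$ contains a subgraph $R$ with property $\mathcal{RE}(\tfrac{1}{2}+\varepsilon)$ and at most $\beta n \log n$ edges; call this event $\mathcal{A}_1$. By Lemma \ref{lem_case1_lem2}, a.a.s.~$G_p$ complements every such subgraph; call this event $\mathcal{A}_2$. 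A union bound shows that $\mathcal{A}_1 \cap \mathcal{A}_2$ holds a.a.s. On this intersection, applying Proposition \ref{prop:rotationextension} with $G_1 = R$ and $G_2 = G_p$ yields that $R \cup G_p = G_p$ is Hamilton connected, in particular Hamiltonian.

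The only point that deserves thought is the potential coupling between the ``rotation'' graph and the ``extension'' graph. One might expect to need sprinkling, i.e.~splitting $G_p$ into two independent random pieces, finding $R$ in the first and complementing in the second. That is unnecessary here because Lemma \ref{lem_case1_lem2} is already phrased to absorb the coupling: its proof conditions on $R \subseteq G_p$ and uses only the conditional distribution of the remaining edges, each of which is still independently present with probability $p$. Hence applying both lemmas to the same random graph $G_p$ introduces no circularity, and the theorem reduces to the bookkeeping above.
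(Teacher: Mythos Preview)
Your proof is correct and follows essentially the same approach as the paper: fix $\varepsilon,\beta$ from Lemma~\ref{lem_case1_lem1}, set $C=\max\{\beta, C_{\ref{lem_case1_lem2}}(\varepsilon,\beta)\}$, intersect the two a.a.s.\ events, and apply Proposition~\ref{prop:rotationextension}. Your additional paragraph explaining why no sprinkling is needed (because Lemma~\ref{lem_case1_lem2} already conditions on $R\subset G_p$ and uses the untouched edges) is a welcome clarification that the paper leaves implicit.
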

\begin{proof}
Let $\varepsilon = \varepsilon_{\ref{lem_case1_lem1}}$,
$\beta = \beta_{\ref{lem_case1_lem1}}$, and
$C = \max\{\beta, C_{\ref{lem_case1_lem2}}(\varepsilon, \beta) \}$.
By Lemma \ref{lem_case1_lem1}, we know that $G_p$ a.a.s.~contains a subgraph
that has property $\mathcal{RE}(\frac{1}{2} + \varepsilon)$ and at most $\beta n \log n$ edges.
Then by Lemma \ref{lem_case1_lem2}, we know that $G_p$ a.a.s.~complements this
subgraph. Therefore when both events hold, we see by Proposition \ref{prop:rotationextension} that
$G_p$ is a.a.s. Hamiltonian.
\end{proof}

\subsection{Second case}

Let $G$ be a Dirac graph satisfying the following as in $(ii)$ of Lemma \ref{lem:dirac}:
there exists a set $A$ of size $\frac{n}{2} \le|A|\le (\frac{1}{2} + 16\alpha)n$ such
that $e(A,\overline{A})\le 6\alpha n^{2}$, and the induced subgraphs
on both $A$ and $\overline{A}$ have minimum degree at least $\frac{n}{5}$.
Let $k=|A|-|\overline{A}|$ so that $|A| = \frac{n+k}{2}$, $|\overline{A}| =
\frac{n-k}{2}$, and $k \le 32\alpha n$.

Note that
\[ |A|\frac{n}{2} \le e(A, V(G)) = e(A,A) + e(A, \overline{A}) \le 2e(A) + 6\alpha n^2. \]
Therefore,
\begin{align}
e(A) \ge& \frac{1}{2}\left( \frac{|A|n}{2} - 6\alpha n^2 \right)
           =  \frac{1}{2}\left( |A|^2   -  \left(|A| - \frac{n}{2}\right)|A| - 6\alpha n^2 \right) \nonumber \\
       \ge& {|A| \choose 2} - \frac{k|A|}{4} - 3\alpha n^2 \ge {|A| \choose 2} - 11\alpha n^2.
       \label{eq:lem_case2}
\end{align}
Similarly, we can show that $e(\overline{A}) \ge {|\overline{A}| \choose 2} - 6\alpha n^2$.
%We omit the proofs of the following two lemmas, which
%are similar to those of Lemmas \ref{lem_case1_lem1} and \ref{lem_case1_lem2}.

\begin{lem} \label{lem_case2_lem1}
There exists a positive real $\beta$ such that
the following holds for every $p\ge \frac{\beta \log n}{n}$:
each of the graphs $G[A]_{p}$ and $G[\overline{A}]_{p}$ a.a.s. contains a subgraph
with property $\mathcal{RE}(\frac{1}{6})$ that has at most $\beta n \log n$ edges.
\end{lem}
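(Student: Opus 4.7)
My plan is to apply Lemma \ref{lem:expander_rotationextension} by showing that $G[A]_{p'}$, where $p' = \beta\log n/n$, is a.a.s.\ an expander on the vertex set $A$ (in the sense of Definition \ref{def:realexpander}) with parameters $\varepsilon = 1/4$ and $r = 16$, which immediately yields property $\mathcal{RE}(1/6)$. The reduction from $p \ge p'$ to $p'$ is the usual coupling: sampling $G_{p'}$ is equivalent to sampling $G_p$ and then keeping each edge with probability $p'/p$, so it suffices to exhibit the desired subgraph inside $G[A]_{p'}$ itself. The argument for $G[\overline{A}]_{p'}$ will be identical, since both $G[A]$ and $G[\overline{A}]$ satisfy the same two key properties: minimum degree at least $n/5$, and, by inequality \eqref{eq:lem_case2}, at most $O(\alpha n^2)$ non-edges inside the respective part.

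First I will handle small-set expansion (part (i) of Definition \ref{def:realexpander}). Fix $X \subseteq A$ of size $k \le |A|/r^{3/2}$. The minimum-degree condition gives $e_{G[A]}(X, A \setminus X) \ge k(n/5 - k) \ge kn/6$, so $|N_{G[A]_{p'}}(X)|$, a sum of independent indicators $\mathbf{1}[v \in N(X)]$ over $v \in A \setminus X$, has expectation $\Omega(knp') = \Omega(k\beta \log n)$ via the standard bound $1 - (1-p')^d \ge \min(dp'/2, 1/3)$. This comfortably exceeds $2rk$ once $\beta$ is a large enough absolute constant. Chernoff's inequality then bounds $\Pr\!\left[|N(X)| < rk\right]$ by $e^{-\Omega(k\beta \log n)}$, and a standard union bound over the $\binom{|A|}{k}$ choices of $X$ and over $k$ is easily absorbed for $\beta$ large. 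Next I will handle large-set expansion (part (ii)) via its equivalent formulation: for every disjoint pair $X, Y \subseteq A$ with $|X| \ge |A|/r^{3/4}$ and $|Y| \ge \varepsilon|A|$, some edge of $G[A]_{p'}$ joins $X$ and $Y$. Inequality \eqref{eq:lem_case2} gives $e_{G[A]}(X,Y) \ge |X||Y| - 11\alpha n^2 = \Omega(n^2)$ (using $\alpha = 2^{-23}$), so the probability that all these edges are absent from $G[A]_{p'}$ is at most $(1-p')^{\Omega(n^2)} = e^{-\Omega(\beta n \log n)}$, which survives a $4^n$-style union bound over pairs $(X,Y)$.

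Finally, $e(G[A]_{p'}) \le 2 e(G[A]) p' \le \beta n \log n$ holds a.a.s.\ by a one-line Chernoff bound on a binomial with mean at most $\binom{|A|}{2} p' \le \beta n \log n / 2$. The main technical point---and the one determining the value of $\beta$---is the small-set expansion of Step 1: keeping the exponential concentration strong enough to beat the $\binom{|A|}{k}$ union bound uniformly in $k$ forces $\beta$ to be a sufficiently large absolute constant. The large-set part is comparatively effortless because $G[A]$ is already nearly complete on $A$, so only a trivial density-plus-Chernoff argument is needed there.
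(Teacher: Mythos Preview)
Your framework---couple down to $p'=\beta\log n/n$, verify the expander conditions of Definition~\ref{def:realexpander} for $G[A]_{p'}$, and invoke Lemma~\ref{lem:expander_rotationextension}---is exactly the paper's. The large-set step and the edge-count step are fine. The gap is in your small-set expansion argument. With $r=16$, condition~(i) must hold for all $k=|X|$ up to $|A|/64=\Theta(n)$, but your claim that $\mathbb{E}\big[|N_{G[A]_{p'}}(X)|\big]=\Omega(knp')=\Omega(k\beta\log n)$ is false once $kp'\gtrsim 1$ (that is, once $k\gtrsim n/\log n$): each indicator is at most $1$, so the expectation is capped at $|A|$ and cannot grow with $\beta$. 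Using only the minimum-degree bound $\sum_v d_v\ge kn/6$, the worst case (mass concentrated on about $n/6$ vertices each with $d_v=k$) gives expectation only about $n/5$, and for $k$ near $|A|/64$ the slack between this and the target $16k\approx |A|/4$ is too small for the concentration bound to beat the $\binom{|A|}{k}$-term union bound.

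The paper sidesteps this by taking $r=2^{16}$ and arguing via edge counts rather than neighborhood sizes: it shows (a) $G[A]_{p'}$ has minimum degree at least $np'/6$, and (b) for every $|X|\le|A|/2^{24}$ and $|Y|\le 2^{16}|X|$ one has $e_{G[A]_{p'}}(X,Y)<|X|np'/10$; then if $|N(X)|<2^{16}|X|$, setting $Y=N(X)$ gives $e(X,Y)=e(X,A)\ge|X|np'/6$, contradicting (b). The upper-tail bound in (b) is uniform in $k$ because $|Y|/n\le 2^{-8}$ stays tiny. If you prefer to keep your direct neighborhood approach, you can repair it by also invoking the near-completeness estimate~\eqref{eq:lem_case2} in part~(i) for the regime $k\gtrsim n/\log n$: then all but $O(\alpha n^2/k)$ vertices of $A\setminus X$ satisfy $d_v\ge k/2$, so $\mathbb{E}[|N(X)|]=(1-o(1))|A|$, and now Hoeffding comfortably beats the union bound.
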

\begin{proof}
We will only verify the property for $G[A]_p$ since the property for
$G[\overline{A}]_p$ can be verified similarly. Let $\beta$ be a
large enough constant. Given $p \ge \frac{\beta \log n}{n}$, let $p'
= \frac{\beta\log n}{n}$ and let $H$ be a random subgraph of
$G[A]_p$ obtained by taking every edge of $G[A]_p$ independently
with probability $\frac{p'}{p}$ (which is less than 1). Then $H$ has
the same distribution as $G[A]_{p'}$. Since $G[A]_{p'}$ a.a.s.~ has
at most $\beta n \log n$ edges, it suffices to show that $H$
a.a.s.~has property $\mathcal{RE}(\frac{1}{6})$. By Lemma
\ref{lem:expander_rotationextension}, it suffices to verify that $H$
is an expander with parameters $\frac{1}{4}$ and $2^{16}$.
Equivalently, we need to verify the following two properties:
\begin{enumerate}[(i)]
  \setlength{\itemsep}{1pt} \setlength{\parskip}{0pt}
  \setlength{\parsep}{0pt}
\item For every subset of vertices $X$ of size $|X|\le \frac{|A|}{2^{24}}$, we have $|N(X)|\ge
2^{16}|X|$, and
\item for every subset of vertices $X$ of size $|X|\ge \frac{|A|}{2^{12}}$, we have $|N(X)|\ge
\frac{3}{4}|A|$.
\end{enumerate}
We claim that $H$ a.a.s.~has the following properties: (a) minimum
degree is at least $\frac{np'}{6}$, (b) for every
pair of sets $X$ and $Y$ of sizes $|X| \le \frac{|A|}{2^{24}}$ and
$|Y| < 2^{16}|X|$, we have $e_H(X,Y) < \frac{|X|np'}{10}$, and
(c) for every pair of sets $X$ and $Y$ of sizes $|X| = \frac{|A|}{2^{12}}$
and $|Y| = \frac{1}{4}|A|$, $e_H(X,Y) > 0$. Suppose that $H$ indeed
satisfies these properties. For every set $X$ of size
$|X| \le \frac{|A|}{2^{24}}$, if $|N(X)| < 2^{16}|X|$, then by
(a) and (b) we will have
\[ \frac{|X|np'}{10} > e_H(X, N(X)) = e_H(X, V(H)) \ge |X|\frac{np'}{6}, \]
which is a contradiction. Thus $(i)$ holds. One can also easily
see that (c) implies $(ii)$.

We omit the proof of Property (a) (we need $\beta$ to be large enough
for (a)) and verify Property (b).
Let $t$ be a positive integer satisfying $t \le \frac{|A|}{2^{24}}$. For a fixed
pair of sets $X$ and $Y$ of sizes $|X| = t$ and $|Y| = 2^{16}t$, in order to have
$e_H(X,Y) \ge \frac{|X|np'}{10}$, at least $\frac{|X|np'}{20}$ non-ordered pairs that have
one endpoint in $X$ and the other in $Y$ must be present. Thus the probability
of the event $e_H(X,Y) \ge \frac{|X|np'}{10}$ is at most
\[ {|X||Y| \choose \frac{|X|np'}{20}} \cdot \left(p'\right)^{\frac{|X|np'}{20}}
 \le \left(\frac{20e|Y|}{n}\right)^{|X|np'/20}. \]
Therefore, by taking the union bound over all choices of $t$ and sets $X$,$Y$,
we see that the probability of $(b)$ being false is
\begin{align*}
& \sum_{t=1}^{|A|/2^{24}} {n \choose t} {n \choose 2^{16}t} \left(\frac{20 \cdot 2^{16} et}{n}\right)^{tnp'/20} \\
\le& \sum_{t=1}^{|A|/2^{24}} \left(\frac{en}{2^{16}t}\right)^{2^{17}t} \left(\frac{20 \cdot 2^{16} et}{n}\right)^{tnp'/20}= \sum_{t=1}^{|A|/2^{24}} \left(\left(\frac{en}{2^{16}t}\right)^{2^{17}} \left(\frac{20 \cdot 2^{16} et}{n}\right)^{np'/20}\right)^{t} \\
=& \sum_{t=1}^{|A|/2^{24}} \left(\left(\frac{e}{2^{16}}\right)^{2^{17}}  \cdot (20\cdot 2^{16})^{2^{17}} \left(\frac{20 \cdot 2^{16} et}{n}\right)^{(np'/20) - 2^{17}}\right)^{t}.
\end{align*}
Since $t \le \frac{n}{2^{24}}$, the summand is maximized at $t=1$, and thus
by $p' = \frac{\beta \log n}{n}$, the probability above can be bounded by
\[ n \cdot (1+o(1))n^{-\Omega(\log n)} = o(1). \]
To verify Property (c), let $X$ and $Y$ be fixed sets of size
$|X| = \frac{|A|}{2^{12}}$ and $|Y| = \frac{1}{4}|A|$. Then
since $e_G(A) \ge {|A| \choose 2} - 11\alpha n^2$ (see \eqref{eq:lem_case2}),
we have
$e_G(X,Y) \ge |X||Y| - 22\alpha n^2 \ge 2^{-15}|A|^2 \ge 2^{-17} n^2$.
Thus by Chernoff's inequality, the probability that $e_H(X,Y) = 0$
is at most $e^{-\Omega(n^2p')} = e^{-\Omega(\beta n \log n)}$.
By taking the union bound over all pairs of sets $X$ and $Y$, we 
obtain (c).
\end{proof}

\begin{lem}  \label{lem_case2_lem2}
For every fixed positive real $\beta$,
there exists a constant $C=C(\beta)$ such that
the following holds for every $p\ge \frac{C\log n}{n}$:
each of the graphs $G[A]_{p}$ and $G[\overline{A}]_{p}$  a.a.s. complements
its every subgraph with property
$\mathcal{RE}(\frac{1}{6})$ that has at most $\beta n \log n$ edges.
\end{lem}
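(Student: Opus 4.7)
The plan is to closely mirror the argument of Lemma~\ref{lem_case1_lem2}, replacing the host graph $G$ by the dense induced subgraph $G[A]$ (the case of $G[\overline{A}]_p$ being symmetric). Fix $\beta>0$ and let $C=C(\beta)$ be a constant to be chosen sufficiently large at the end, and take $p \ge C\log n/n$. I would bound the failure probability by
\begin{equation*}
\mathbf{P} \le \sum_{R} \mathbf{P}\bigl(G[A]_p \text{ does not complement } R \mid R\subset G[A]_p\bigr) \cdot \mathbf{P}(R\subset G[A]_p),
\end{equation*}
where $R$ ranges over all graphs on $A$ with property $\mathcal{RE}(\tfrac{1}{6})$ and at most $\beta n \log n$ edges.

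To control the conditional probability I enumerate over all paths $P$ on subsets of $A$ together with a fixed edge $e$ of $P$, contributing a factor of at most $|A|^2 \cdot |A|! \le n^2\cdot n!$. If $R \cup P$ already contains a path through $e$ longer than $P$, condition (i) of Definition~\ref{def:extension} is met; otherwise property $\mathcal{RE}(\tfrac{1}{6})$ supplies sets $S_P \subseteq A$ and $T_v \subseteq A$ (for $v \in S_P$), each of size at least $|A|/6 \ge n/12$. The essential input from case (ii) of Lemma~\ref{lem:dirac} is the density bound \eqref{eq:lem_case2}, which says $G[A]$ has at most $11\alpha n^2$ non-edges. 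A short inclusion-exclusion then shows
\begin{equation*}
\#\bigl\{(v,w)\in S_P\times T_v : \{v,w\}\in E(G[A])\bigr\} \ge \tfrac{|A|^2}{36} - 22\alpha n^2 - |S_P| \ge c n^2
\end{equation*}
for some absolute constant $c>0$, using $\alpha = 1/2^{23}$ and $|A|\ge n/2$. After discarding the at most $\beta n \log n$ edges already in $R$, at least $cn^2/2$ candidate edges remain, each of which, conditionally on $R \subset G[A]_p$, appears independently in $G[A]_p$ with probability $p$. Hence the probability that none of them appears is at most $(1-p)^{cn^2/2} \le e^{-cn^2p/2}$, and union-bounding over paths yields a conditional failure probability of at most $e^{-c'n^2p}$ for some $c'>0$, as long as $C$ is large enough to absorb the factor $n^2\cdot n!$.

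Summing over graphs $R$ exactly as in the proof of Lemma~\ref{lem_case1_lem2}, the sum $\sum_{t\le \beta n\log n}\binom{\binom{|A|}{2}}{t}p^t$ is dominated by its largest term $(eC/\beta)^{\beta n\log n}$, so the overall failure probability is
\begin{equation*}
\mathbf{P} \le \exp\bigl(-\Omega(Cn\log n) + O(\beta n\log n\cdot\log(C/\beta))\bigr) = o(1)
\end{equation*}
for $C$ sufficiently large depending on $\beta$. The identical argument handles $G[\overline{A}]_p$, using $e(G[\overline{A}])\ge\binom{|\overline{A}|}{2}-6\alpha n^2$ and $|\overline{A}|\ge n/3$. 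The only step that genuinely differs from Lemma~\ref{lem_case1_lem2} is the lower bound on $e_{G[A]}(S_P, T_v)$, and this is also the only place where the hypothesis of case (ii) of Lemma~\ref{lem:dirac} is used; I do not anticipate any obstacle beyond carefully bookkeeping the inclusion-exclusion for this density bound.
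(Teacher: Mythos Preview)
Your proposal is correct and follows essentially the same approach as the paper's own proof: both reduce to the argument of Lemma~\ref{lem_case1_lem2}, with the only new ingredient being the lower bound $e_{G[A]}(S_P,T_v)\ge |A|^2/36-22\alpha n^2\ge n^2/200$ derived from the density estimate~\eqref{eq:lem_case2}. The paper's proof is in fact just a three-line sketch pointing to exactly this modification, so your write-up is if anything more detailed than the original.
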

\begin{proof}
It suffices to make a slight modification to the proof of Lemma \ref{lem_case1_lem2}
to prove this lemma. As in Lemma \ref{lem_case2_lem1}, we only consider
the graph $G[A]$. Fix a graph $R$ with property $\mathcal{RE}(\frac{1}{6})$
that has at most $\beta n \log n $ edges, and let
$P$ be a maximum path in $R \cup P$.
Let $S_P$ and $T_v$ be given as in Definition \ref{def:rotation}. Then
since $e_G(A) \ge {|A| \choose 2} - 11\alpha n^2$ (see \eqref{eq:lem_case2}),
we have
\[ e_G(S_P, T_v) \ge |S_P|\cdot |T_v| - 22\alpha n^2 \ge \frac{|A|^2}{36} - 22\alpha n^2 \ge \frac{n^2}{200}.  \]
We can proceed exactly as in Lemma \ref{lem_case1_lem2} to conclude our lemma.
\end{proof}

\begin{thm}
There exists a constant $C$ such that the following holds for every
$p \ge \frac{C\log n}{n}$. If $G$ is a Dirac graph satisfying $(ii)$ of Lemma \ref{lem:dirac},
then $G_{p}$ is a.a.s. Hamiltonian.
\end{thm}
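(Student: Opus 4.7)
The plan is to construct a Hamilton cycle in $G_p$ by splicing together Hamilton paths inside $A$ and inside $\overline{A}$ via two vertex-disjoint crossing edges. Setting $\beta=\beta_{\ref{lem_case2_lem1}}$ and $C=\max\{\beta,\,C_{\ref{lem_case2_lem2}}(\beta),\,5\}$, I first apply Lemmas \ref{lem_case2_lem1} and \ref{lem_case2_lem2} in combination with Proposition \ref{prop:rotationextension}: the former provides, a.a.s., a subgraph $R_A\subseteq G[A]_p$ with property $\mathcal{RE}(\tfrac{1}{6})$ and at most $\beta n\log n$ edges, the latter ensures that $G[A]_p$ a.a.s.~complements every such subgraph, and Proposition \ref{prop:rotationextension} then yields that $G[A]_p$ is a.a.s.~Hamilton connected. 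The symmetric argument shows $G[\overline{A}]_p$ is a.a.s.~Hamilton connected.

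Next I will show that the bipartite random graph $X:=G_p\cap(A\times\overline{A})$ a.a.s.~contains a matching of size at least two. The key deterministic input is the estimate $e_G(A\setminus\{v\},\overline{A})\ge n/4$ for every $v\in A$, and its analogue $e_G(A,\overline{A}\setminus\{v\})\ge n/4$ for every $v\in\overline{A}$. Writing $k=|A|-|\overline{A}|$, the Dirac condition forces every vertex of $\overline{A}$ to have at least $k/2+1$ neighbors in $A$; hence for $k\ge 2$, deleting any single $v\in A$ still leaves at least $|\overline{A}|\cdot k/2\ge n/4$ crossing edges. For $k=0$ the same Dirac condition symmetrically forces each $A$-vertex to have at least one $\overline{A}$-neighbor, yielding at least $|A|-1\ge n/4$ crossing edges after deletion of $v$. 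Since the absence of a matching of size two in a bipartite graph is equivalent to all edges being contained in some vertex-star, a union bound gives
\[
\mathbf{P}\bigl(X\text{ has no matching of size }2\bigr)\;\le\;n\cdot(1-p)^{n/4}\;\le\;n^{1-C/4}\;=\;o(1).
\]

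Finally, because the three events established above depend on pairwise disjoint edge-sets of $G$, they are mutually independent and all hold a.a.s. Given two vertex-disjoint crossing edges $\{a_1,b_1\},\{a_2,b_2\}\in X$, Hamilton connectivity of $G[A]_p$ produces a Hamilton path from $a_1$ to $a_2$, Hamilton connectivity of $G[\overline{A}]_p$ produces a Hamilton path from $b_1$ to $b_2$, and concatenating these with the two crossing edges yields the desired Hamilton cycle. The main obstacle is the deterministic lower bound on $e_G(A\setminus\{v\},\overline{A})$, which must rule out the pathological scenario where all crossing edges of $G$ concentrate on a single vertex; the case split on $k$ is essential, because only the $\overline{A}$-side Dirac condition helps when $k\ge 2$, whereas for $k=0$ it is the $A$-side Dirac condition that comes into play.
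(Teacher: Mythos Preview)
Your proof follows essentially the same route as the paper's: establish Hamilton connectivity of $G[A]_p$ and $G[\overline{A}]_p$ via Lemmas~\ref{lem_case2_lem1}, \ref{lem_case2_lem2} and Proposition~\ref{prop:rotationextension}, then produce two vertex-disjoint crossing edges by a union bound over the vertex that might dominate the bipartite random graph, and splice. The paper uses the same vertex-cover-of-size-one reformulation and the same Dirac-based lower bounds on the number of crossing edges avoiding a given vertex.

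There is one small gap in your case analysis for the crossing-edge bound. You treat $k\ge 2$ and $k=0$, but $k=|A|-|\overline{A}|$ can equal $1$ when $n$ is odd (with $|A|=\lceil n/2\rceil$). Your displayed bound $|\overline{A}|\cdot k/2$ then gives only $(n-1)/4$, narrowly missing $n/4$. The fix is immediate: for odd $n$ the minimum degree is $\lceil n/2\rceil=(n+1)/2$, so each vertex of $\overline{A}$ has at least $(n+1)/2-\bigl((n-1)/2-1\bigr)=2$ neighbours in $A$, hence at least one in $A\setminus\{v\}$, giving $e_G(A\setminus\{v\},\overline{A})\ge|\overline{A}|=(n-1)/2\ge n/4$. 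The paper handles this by writing $|A|=n/2+t$ with $t>0$ and using $\lceil t+1\rceil\ge 2$ for the $\overline{A}$-degree bound. Your remark on independence of the three events is correct but unnecessary: each holds a.a.s., so their conjunction does too by the union bound on complements.
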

\begin{proof}
Let $\beta = \beta_{\ref{lem_case2_lem1}}$ and $C = \max\{\beta,
C_{\ref{lem_case2_lem2}}(\beta), 4 \}$. By Lemma
\ref{lem_case2_lem1}, we know that a.a.s.~each $G_p[A]$ and
$G_p[\overline{A}]$ contains a subgraph that has property
$\mathcal{RE}(\frac{1}{6})$ and at most $\beta n \log n$ edges. Then
by Lemma \ref{lem_case2_lem2}, we know that $G_p[A]$ and
$G_p[\overline{A}]$ complement each of their subgraphs with the
above property. Therefore when both events hold, we see by
Proposition \ref{prop:rotationextension} that $G_p[A]$ and
$G_p[\overline{A}]$ are Hamiltonian connected. It then suffices to
show that in $G_p$ a.a.s.~there exist two vertex disjoint edges
that connect $A$ and $\overline{A}$, since together with the
Hamilton connectivity of $G_p[A]$ and $G_p[\overline{A}]$ this will
imply Hamiltonicity of $G_p$. In order to prove this, consider the
bipartite graph $\mathcal{B}$ induced by the edges of $G$ between
$A$ and $\overline{A}$, and let $\mathcal{B}_p = \mathcal{B} \cap
G_p$. By Hall's theorem, it suffices to show that $\mathcal{B}_p$
a.a.s.~does not have a vertex that dominates all the edges of
$\mathcal{B}_p$.

Let $v$ be a fixed vertex and first assume that $|A| =
|\overline{A}| = \frac{n}{2}$. Since the minimum degree of $G$ is at
least $\frac{n}{2}$, the graph $\mathcal{B}$ has minimum degree at
least 1. Thus there exist at least $\frac{n}{2} -1$ edges which are
not incident to $v$, and the probability that all the edges of
$\mathcal{B}_p$ are incident to $v$ is at most $(1-p)^{n/2 - 1} \le
e^{- C \log n / 3}$. If $|A| = \frac{n}{2} + t$ for some $t>0$, then
all the vertices of $\overline{A}$ have degree at least $\lceil t  +
1 \rceil \ge 2$ in $\mathcal{B}$. Since $t \le 16\alpha n$, the
total number of edges in $\mathcal{B}$ is at least $2 \cdot
(\frac{n}{2} - t) \ge (1-32\alpha)n$, and
since the maximum degree of $\mathcal{B}$ is at most $|A| \le
(\frac{1}{2} + 16\alpha)n$, the number of edges
not incident to $v$ is at least $(1-32\alpha)n - (\frac{1}{2} + 16\alpha)n \ge
\frac{n}{3}$. Therefore in this case, the probability that all the
edges of $\mathcal{B}_p$ are incident to $v$ is at most $(1-p)^{n/3}
\le e^{- C \log n/3}$.

Thus in either of the cases, for a fixed vertex $v$, the probability
that $v$ dominates all the edges of $\mathcal{B}_p$ is at most
$e^{-C \log n /3}$. Since $C \ge 4$, this probability is
$o(n^{-1})$, and by taking the union bound over all the vertices, we
can conclude that a.a.s.~there is no vertex which is incident to all
the edges of $\mathcal{B}_p$. This concludes the proof.
\end{proof}

\subsection{Third case}

Let $G$ be a Dirac graph satisfying the following as in $(iii)$ of
Lemma \ref{lem:dirac}. There exists a set $A$ of size $\frac{n}{2}
\le|A|\le(\frac{1}{2}+16\alpha)n$ such that the bipartite graph
induced by the edges between $A$ and $\overline{A}$ has at least
$(\frac{1}{4}-14\alpha)n^{2}$ edges and minimum degree at least
$\frac{n}{64}$. Moreover, either $|A|=\lceil \frac{n}{2} \rceil$, or
the induced subgraph $G[A]$ has maximum degree at most
$\frac{n}{32}$. Let $V_1 = A$ and $V_2 = \overline{A}$. Let
$k=|V_1|-|V_2|$ so that $|V_1| = \frac{n+k}{2}$, $|V_2| =
\frac{n-k}{2}$, and $k \le 32\alpha n$.

\begin{lem} \label{lem_case3_lem1}
There exists a constant $\beta$ such that the
following holds for every $p\ge \frac{\beta \log n}{n}$:
$G_{p}$ a.a.s. contains a subgraph with property $\BIPRE(\frac{1}{6})$
that has at most $\beta n \log n$ edges.
\end{lem}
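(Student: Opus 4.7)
The plan is to use a sprinkling argument. We write $p = p_1 + p_2$ with $p_i = \beta_i \log n/n$ and $\beta = \beta_1 + \beta_2$; taking $H_1, H_2$ to be independent random subgraphs of $G$ with edge probabilities $p_1, p_2$, the union $H_1 \cup H_2$ is stochastically dominated by $G_p$. The idea is to use $H_2$ to produce the $k$ special edges forming $S_E$ inside $V_1$ and to use $H_1$ to provide the bipartite expansion of the framed subgraph; these two pieces of the frame can be built essentially independently thanks to this separation.

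For the matching, when $|V_1|=\lceil n/2\rceil$ we have $k\le 1$ and the step is trivial (for $k=1$, the minimum $V_1$-degree in $G$ is at least $1$, giving $e(G[V_1])\ge n/4$ and hence an edge in $H_2[V_1]$ a.a.s.). Otherwise the case-(iii) hypothesis gives $\Delta(G[V_1])\le n/32$. Since every $v\in V_1$ has $\deg_G(v)\ge n/2$ and $|V_2|\le (n-k)/2$, it has at least $k/2$ neighbours in $V_1$, so $e(G[V_1])\ge |V_1|k/4\ge nk/8$. Chernoff bounds on $e(H_2[V_1])$ and on the individual vertex degrees imply that, for $\beta_2$ large enough, a.a.s.\ $e(H_2[V_1])\ge (1-o(1))nkp_2/8$ and $\Delta(H_2[V_1])\le(1+o(1))np_2/32$; a greedy matching in $H_2[V_1]$ therefore has size at least $e/(2\Delta+1)\ge(2-o(1))k$, producing a matching $S_E$ of size exactly $k$. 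We set $S_V$ to be one endpoint of each edge of $S_E$.

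For the bipartite expansion, we condition on $H_2$ (equivalently on $S_E$ and $V_1''$); by independence of $H_1$ and $H_2$, the bipartite part of $H_1$ remains a random bipartite subgraph of $G[V_1,V_2]$ with edge probability $p_1$. The underlying bipartite graph $G[V_1,V_2]$ has minimum degree at least $\gamma n/2 = n/64$ and at most $|V_1||V_2|-(\tfrac{1}{4}-14\alpha)n^2\le 14\alpha n^2$ non-edges. Chernoff/union-bound arguments analogous to those in Lemma~\ref{lem_case1_lem1} then verify both conditions of Definition~\ref{def:bip-expander} with $r=16$ and $\varepsilon=1/4$, for $\beta_1$ large enough: small sets expand by the minimum-degree bound, while for large sets the number of vertices that can fail to be neighbours of a given set is controlled by the bound on non-edges.

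The main technical delicacy is verifying condition~(ii) against $V_1''=V_1\setminus V(S_E)$ rather than against $V_1$. The key observation is that $|V(S_E)|=2k\le n/2^{17}$ is negligible compared with the minimum bipartite degree $n/64$: for every $v\in V_2$, one has $d_G(v,V_1'')\ge d_G(v,V_1)-2k\ge n/64-2k\ge n/128$, so $|N_{H_1}(v)\cap V_1''|\sim\mathrm{Bin}(d_G(v,V_1''),p_1)$ has mean at least $\beta_1\log n/128$ and is a.a.s.\ at least $r=16$ for every $v\in V_2$ by Chernoff and a union bound, provided $\beta_1$ is sufficiently large. For larger $|Y|$ the standard $(Y,T)$-union-bound argument applies: the inequality $e_G(Y,V_1''\setminus T)\ge |Y||V_1''\setminus T|-14\alpha n^2$ gives enough edges to make the probability of a bad configuration negligible, covering the remaining regimes of $|Y|$. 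Finally, Chernoff gives $|E(H_1\cup H_2)|\le(1+o(1))(p_1+p_2)\binom{n}{2}\le\beta n\log n$ a.a.s., and Lemma~\ref{lem:bipexpander_rotationextension} concludes that the resulting $k$-bipartite-expander has property $\BIPRE(1/6)$.
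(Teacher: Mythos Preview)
Your overall strategy is sound and closely parallels the paper's: reduce to $p=\Theta(\log n/n)$, build $k$ disjoint edges inside $V_1$ to form the special frame, then verify bipartite expansion and invoke Lemma~\ref{lem:bipexpander_rotationextension}. The sprinkling decomposition $p=p_1+p_2$ is a clean way to decouple the matching in $V_1$ from the bipartite part (the paper simply works in $G_{p'}$ and uses that the two edge sets are independent), and your greedy matching bound $e/(2\Delta+1)\ge (2-o(1))k$ is a valid alternative to the paper's covering-number argument.

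There is, however, a genuine gap in your verification of condition~(i) of Definition~\ref{def:bip-expander} with $r=16$. You write that ``small sets expand by the minimum-degree bound,'' meaning the Lemma~\ref{lem_case1_lem1} template: min degree in $H_1$ at least $D\approx np_1/70$, and $e_{H_1}(X,Y)<|X|D$ for all $|Y|=r|X|$. But with $r=16$ the small-set range goes up to $|X|=n/r^{3/2}=n/64$, so $|Y|=16|X|$ can be as large as $n/4$, which is \emph{far larger} than the minimum bipartite degree $n/64$ in $G$. For $|X|=n/64$ and $|Y|=n/4$, the expected value of $e_{H_1}(X,Y)$ is about $|X|\cdot|Y|\cdot p_1 = n^2p_1/256$, while $|X|D\approx n^2p_1/4480$; the desired inequality $e_{H_1}(X,Y)<|X|D$ fails even in expectation. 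The paper sidesteps this by taking $r=2^{20}$, so that $|Y|\le n/2^{10}\ll n/64$ and the minimum-degree argument goes through. Your argument can be repaired either by increasing $r$ accordingly, or by also invoking the non-edge bound $e_G(X,V_2\setminus Y)\ge |X||V_2\setminus Y|-14\alpha n^2$ for the intermediate range (roughly $\Omega(\alpha n)\le |X|\le n/64$), exactly as you already do for condition~(ii). As written, though, the sentence ``small sets expand by the minimum-degree bound'' does not cover that range for condition~(i).
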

\begin{proof}
Let $\beta \ge 128$ be a large enough constant. Let $p' =
\frac{\beta \log n}{n}$ and note that $p \ge p'$. Let $H$ be a
subgraph of $G_p$ obtained by taking each edge independently with
probability $\frac{p'}{p}$, and note that the distribution of $H$ is
identical to that of $G_{p'}$. Since $G_{p'}$ a.a.s.~has at most
$(1+o(1))p' \cdot  e(G) \le \beta n \log n$ edges, it suffices to
show that $G_{p'}$ a.a.s.~has property $\BIPRE(\frac{1}{6})$.

We first show that $G_{p'}$ contains at least $k$ vertex disjoint
edges in $V_1$. To show this, it suffices to show that $G_{p'}[V_1]$
a.a.s.~has covering number at least $2k-1$. Since this is trivial
for $k =0$, we may assume that $k \ge 1$. By the union bound, we can
bound the probability that the covering number is at most $2k-2$ as
follows:
\begin{equation} \label{eq:coveringnum}
\sum_{X \subset V_1, |X| = 2k-2} \BFP( V_1 \setminus X \textrm{ is an independent set in $G_{p'}$}).
\end{equation}
Note that since $|V_2| = \frac{n-k}{2}$, the induced subgraph
$G[V_1]$ must have minimum degree at least $\frac{k}{2}$. Moreover,
since the graph $G[V_1]$ has maximum degree at most $\frac{n}{32}$,
we can see that for a set $X$ of size $|X| = 2k-2$, the number of
edges of $G$ in $V_1 \setminus X$ is at least
\[ \frac{1}{2} \cdot \frac{k}{2} \cdot |V_1| - (2k-2) \frac{n}{32} \ge \frac{kn}{8} - \frac{kn}{16} = \frac{kn}{16}, \]
and therefore
\[ \BFP( V_1 \setminus X \textrm{ is an independent set}) \le (1-p)^{kn/16} \le e^{-knp/16} \le n^{-\beta k/16}. \]
By using this inequality (note that $\beta \ge 128$), we can bound
\eqref{eq:coveringnum} from above by
\[ {n \choose 2k} \cdot n^{-\beta k/16} = o(1). \]

Consequently we a.a.s.~have $k$ vertex disjoint edges in
$G_{p'}[V_1]$. Condition on this event being true. Arbitrarily pick
$k$ vertex disjoint edges in $V_1$ as our special edges $S_E$, and
for each such edge, declare one of its vertices as a special vertex
(let $S_V$ be the set of special vertices). Note that $G_{p'}$
contains a special frame $(V_1, V_2, S_V, S_E)$. Let $V_1'$, $V_1''$
be as in the Definition \ref{def:almostbipartite}. We will prove
that for large enough $\beta$, conditioned on the special frame,
$G_{p'}$ a.a.s.~has the following two properties:
\begin{enumerate}
  \setlength{\itemsep}{1pt} \setlength{\parskip}{0pt}
  \setlength{\parsep}{0pt}
\item For every $X\subset V_{1}$, if $|X|\le \frac{n}{2^{30}}$, then $|N(X)\cap V_{2}|\ge 2^{20}|X|$,
and if $|X|\ge \frac{n}{2^{15}}$, then $|N(X)\cap V_{2}|\ge
\frac{3}{4}|V_{2}|$, and
\item for every $Y\subset V_{2}$, if $|Y|\le \frac{n}{2^{30}}$, then $|N(Y)\cap V_{1}''|\ge 2^{20}|Y|$,
and if $|Y|\ge \frac{n}{2^{15}}$, then $|N(Y)\cap V_{1}''|\ge
\frac{3}{4}|V_{1}''|$.
\end{enumerate}
This will be done by establishing the following properties that the bipartite subgraph $H$ of
$G_{p'}$ containing all the edges of $G_{p'}$ between $V_1, V_2$ a.a.s.~has: (a) minimum degree is at least
$\frac{np'}{70}$, (b) for every pair of sets $X$ and $Y$ of sizes
$|X| \le \frac{n}{2^{30}}$ and $|Y| < 2^{20}|X|$, we have
$e_H(X,Y) < \frac{|X|np'}{80}$, and (c) for every pair of sets $X$
and $Y$ of sizes $|X| = \frac{n}{2^{15}}$ and $|Y| =
\frac{n}{10}$, $e_H(X,Y) > 0$. To verify (a), we can use the fact
that the bipartite graph induced by the edges between $A$ and
$\overline{A}$ has minimum degree at least $\frac{n}{64}$. The proof of (b) follows from direct application of
Chernoff's inequality and the union bound. To
verify (c), we can use that $e(A, \overline{A}) \ge (\frac{1}{4} -
14\alpha)n^2 \ge |A|\cdot |\overline{A}| - 14\alpha n^2$
and therefore $e(X, Y) \geq |X||Y|-14\alpha n^2 \geq 2\alpha n^2$. Detailed
computation and the derivation of (i), (ii) from (a), (b), and (c)
are similar to that of Lemma \ref{lem_case1_lem1}. Once we verify
these properties we have a bipartite-expander with parameters
$\frac{1}{4}$ and $2^{30}$ (see Definition \ref{def:bip-expander}),
and by Lemma \ref{lem:bipexpander_rotationextension}, we can derive
that our graph has property $\BIPRE(\frac{1}{6})$.
\end{proof}

\begin{lem}  \label{lem_case3_lem2}
For every fixed positive real $\beta$, there exists a constant $C =
C(\beta)$ such that the following holds for every $p\ge \frac{C\log
n}{n}$: $G_{p}$ a.a.s. complements every its subgraph with property
$\BIPRE(\frac{1}{6})$ that has at most $\beta n \log n$ edges.
\end{lem}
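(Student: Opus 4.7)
The plan is to transplant the argument of Lemma~\ref{lem_case1_lem2} into the bipartite setting, replacing Definitions~\ref{def:rotation} and \ref{def:extension} with their bipartite analogues \ref{def:bip-rotation} and \ref{def:bip-extension}. I will fix a graph $R$ with property $\BIPRE(\frac{1}{6})$ and at most $\beta n\log n$ edges (together with its matched special frame), and bound the conditional probability
\[
\BFP\big(\text{$G_p$ does not complement $R$}\mid R\subset G_p\big)
\]
by union-bounding over proper paths $P$ with respect to this frame.

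For a fixed proper path $P$, if $R\cup P$ already contains a longer proper path, then Definition~\ref{def:bip-extension}(i) holds and there is nothing to do. Otherwise, Definition~\ref{def:bip-rotation}(ii) provides a set $S_P\subset V_2$ and, for each $v\in S_P$, a set $T_v\subset V_1$, each of size at least $\frac{n}{6}$. If some edge of $R$ connects a vertex $v\in S_P$ to a vertex $w\in T_v$, then Definition~\ref{def:bip-extension}(ii) is satisfied, so I may assume this fails. The heart of the proof is then to show that $G$ itself contains many candidate edges of the form $\{v,w\}$ with $v\in S_P$ and $w\in T_v$; conditional on $R\subset G_p$, each of these will appear in $G_p$ independently with probability $p$, and we only need one to survive.

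The key quantitative input is case (iii) of Lemma~\ref{lem:dirac}, which gives at most $|V_1||V_2|-(\frac{1}{4}-14\alpha)n^2\le 14\alpha n^2$ non-edges of $G$ between $V_1$ and $V_2$. Since $v\in V_2$ and $w\in V_1$, distinct pairs $(v,w)$ yield distinct candidate edges (no double-counting, unlike in Lemma~\ref{lem_case1_lem2}), and
\[
\sum_{v\in S_P}|T_v\cap N_G(v)|\;\ge\;\tfrac{n}{6}\cdot\tfrac{n}{6}-14\alpha n^2\;\ge\;\tfrac{n^2}{40},
\]
using $\alpha=\frac{1}{2^{23}}$. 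Hence the probability that none of these $\Omega(n^2)$ candidate edges appears in $G_p$ is at most $(1-p)^{n^2/40}\le e^{-n^2p/40}$. The number of proper paths is at most $n^2\cdot n!$ (as in Lemma~\ref{lem_case1_lem2}, counting the length, the ``fixed edge'', and the vertex arrangement), so a union bound over $P$ yields
\[
\BFP\big(\text{$G_p$ does not complement $R$}\mid R\subset G_p\big)\;\le\;e^{-n^2p/80}
\]
for $p\ge\frac{C\log n}{n}$ with $C$ sufficiently large.

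The remainder of the proof is a verbatim copy of the concluding calculation of Lemma~\ref{lem_case1_lem2}: summing over graphs $R$ with $t\le\beta n\log n$ edges via $\BFP(R\subset G_p)\le p^t$ and invoking the monotonicity of $(en^2p/t)^t$ in the range $1\le t\le\beta n\log n$ (valid once $C\ge\beta$), one concludes that the total failure probability is $o(1)$ for $C=C(\beta)$ large enough. The only genuinely new step is the edge count above: since $S_P$ and $T_v$ now have size only $\frac{n}{6}$, the minimum degree hypothesis of a Dirac graph is on its own too weak to furnish $\Omega(n^2)$ candidate edges, and it is precisely the bipartite density assumption $e(A,\overline{A})\ge(\frac{1}{4}-14\alpha)n^2$ of case (iii) that makes the argument go through.
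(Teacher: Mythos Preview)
Your proposal is correct and follows essentially the same approach as the paper: both transplant the argument of Lemma~\ref{lem_case1_lem2} to the bipartite setting, and the crucial edge count $e_G(S_P,T_v)\ge |S_P||T_v|-14\alpha n^2\ge \frac{n^2}{36}-14\alpha n^2\ge \frac{n^2}{40}$ is exactly the computation the paper performs. One cosmetic remark: in the bipartite Definition~\ref{def:bip-extension} there is no distinguished fixed edge $e$, so your union bound over proper paths need only account for length and vertex arrangement (at most $n\cdot n!$ rather than $n^2\cdot n!$), though your stated bound is of course still valid.
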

\begin{proof}
As in the proof of Lemma \ref{lem_case2_lem2}, it suffices to make a
slight modification to the proof of Lemma \ref{lem_case1_lem2} to
prove this lemma. Fix a graph $R$ with property
$\mathcal{RE}(\frac{1}{6})$ that has at most $\beta n \log n $
edges, and let $P$ be a maximum path in $R \cup P$. Let $S_P$ and
$T_v$ be given as in Definition \ref{def:rotation}. Then since
$e_G(A, \overline{A}) \ge \frac{n^2}{4} - 14\alpha n^2 \ge |A| \cdot
|\overline{A}| - 14\alpha n^2$, we have
\[ e_G(S_P, T_v) \ge |S_P|\cdot |T_v| - 14\alpha n^2 \ge \frac{n^2}{36} - 14\alpha n^2 \ge \frac{n^2}{40}.  \]
We can proceed exactly as in Lemma \ref{lem_case1_lem2} to conclude
our lemma.
\end{proof}

\begin{thm}
There exists a constant $C$ such that the following holds for every
$p \ge \frac{C\log n}{n}$. If $G$ is a Dirac graph satisfying $(iii)$ of Lemma \ref{lem:dirac},
then $G_{p}$ is a.a.s. Hamiltonian.
\end{thm}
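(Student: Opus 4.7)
The plan is to mimic the template used for cases (i) and (ii) of Lemma \ref{lem:dirac}, but now exploiting the bipartite framework of Section 3.2 instead of the general one. Cases (i) and (ii) were handled by combining a \emph{rotation} lemma (\ref{lem_case1_lem1} / \ref{lem_case2_lem1}) with a \emph{complementation} lemma (\ref{lem_case1_lem2} / \ref{lem_case2_lem2}), and case (iii) has exactly the same structure: Lemmas \ref{lem_case3_lem1} and \ref{lem_case3_lem2} are already the corresponding bipartite analogues, so it only remains to stitch them together.

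Concretely, I would set $\beta = \beta_{\ref{lem_case3_lem1}}$ and $C = \max\{\beta,\, C_{\ref{lem_case3_lem2}}(\beta)\}$, and assume $p \ge \frac{C \log n}{n}$. By Lemma \ref{lem_case3_lem1}, $G_p$ a.a.s.\ contains a subgraph $R$ on the same vertex set which has property $\BIPRE(\tfrac{1}{6})$ and at most $\beta n \log n$ edges; in particular, $R$ carries a matched special frame $(V_1, V_2, S_V, S_E, f)$ obtained from the partition $A, \overline{A}$ supplied by Lemma \ref{lem:dirac}(iii). By Lemma \ref{lem_case3_lem2}, applied with this $\beta$, we have a.a.s.\ that $G_p$ complements every subgraph of itself with property $\BIPRE(\tfrac{1}{6})$ and at most $\beta n \log n$ edges. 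Intersecting the two a.a.s.\ events, we can write $G_p = R \cup (G_p \setminus R)$ with $R \in \BIPRE(\tfrac{1}{6})$ and $G_p \setminus R$ complementing $R$ in the sense of Definition \ref{def:bip-extension}. Proposition \ref{prop:bip-hamiltonian} then delivers a Hamilton cycle in $G_p = R \cup (G_p \setminus R)$.

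One point worth stressing is that unlike case (ii), no separate argument is needed to patch a cycle together from two halves, because the bipartite rotation-extension framework already produces a single Hamilton cycle that uses both the cross edges between $V_1$ and $V_2$ and the $k$ special edges inside $V_1$ required by the asymmetry $|V_1| = |V_2| + k$. I do not foresee a real obstacle: all the heavy lifting — verifying the two-sided expansion of the bipartite subgraph, showing the existence of $k$ vertex-disjoint edges inside $V_1$ that become the special edges of the frame, and providing enough random edges for the extension step — has been packaged into Lemmas \ref{lem_case3_lem1} and \ref{lem_case3_lem2}. The only minor bookkeeping is making sure the same matched special frame is used in both applications, but Lemma \ref{lem_case3_lem2} is stated for an arbitrary subgraph with property $\BIPRE(\tfrac{1}{6})$, so it in particular applies to the $R$ produced by Lemma \ref{lem_case3_lem1}, and the conclusion follows.
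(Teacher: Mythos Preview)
Your proposal is correct and follows essentially the same approach as the paper: set $\beta = \beta_{\ref{lem_case3_lem1}}$ and $C = \max\{\beta, C_{\ref{lem_case3_lem2}}(\beta)\}$, invoke Lemma~\ref{lem_case3_lem1} to obtain a sparse subgraph with property $\BIPRE(\tfrac{1}{6})$, invoke Lemma~\ref{lem_case3_lem2} to show $G_p$ complements it, and conclude via Proposition~\ref{prop:bip-hamiltonian}. The paper's version is terser and simply applies Proposition~\ref{prop:bip-hamiltonian} with $G_2 = G_p$ rather than $G_p \setminus R$, but since $R \cup G_p = G_p$ this is equivalent to your formulation.
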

\begin{proof}
Let $\beta = \beta_{\ref{lem_case3_lem1}}$, and let $C = \max\{
\beta, C_{\ref{lem_case3_lem2}}(\beta)\}$. By Lemma
\ref{lem_case3_lem1}, we know that $G_p$ a.a.s.~contains a subgraph
that has property $\BIPRE(\frac{1}{6})$ and at most $\beta n \log n$
edges. Then by Lemma \ref{lem_case3_lem2}, we know that $G_p$
a.a.s.~complements this subgraph. Therefore when both events hold,
we see by Proposition \ref{prop:bip-hamiltonian} that $G_p$ is
Hamiltonian.
\end{proof}

\section{Hamiltonicity game on Dirac graphs}

In this section we prove Theorem \ref{thm_maingame}.
We begin by presenting some standard results and techniques in
positional game theory which we will need later.
In 1973, Erd\H{o}s and Selfridge \cite{ErSe} gave a
sufficient condition for Breaker's win in a $(1:1)$ Maker-Breaker game.
Later, Beck \cite{Beck} generalized this result and
proved the following theorem.
\begin{thm} \label{thm:beck}
Consider a $(p:q)$ Maker-Breaker game played over some board
where $\mathcal{F}$ is the family of winning sets. If
\[ \sum_{B \in \mathcal{F}} (1 + q)^{-|B|/p} < \frac{1}{1+q}, \]
then Breaker has a winning strategy.
\end{thm}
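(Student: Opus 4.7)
The plan is to adapt the Erd\H{o}s--Selfridge weight-function argument to the biased $(p:q)$ setting. I will assign each winning set $B \in \mathcal{F}$ the dynamic weight $w(B) = (1+q)^{-(|B|-|B\cap M|)/p}$ when $B \cap N = \emptyset$, and $w(B)=0$ otherwise, where $M$ and $N$ are Maker's and Breaker's currently claimed sets; let the potential be $\Phi = \sum_B w(B)$. By hypothesis $\Phi_0 < \tfrac{1}{1+q}$, and if Maker ever completes some $B \subseteq M$ then $w(B) = 1$ and hence $\Phi \ge 1$. So it will suffice to design a Breaker strategy keeping $\Phi < 1$ at every state.

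Breaker's strategy will be greedy: at each of his $q$ picks within a turn, he claims an unclaimed vertex of maximum current \emph{danger} $d(v) = \sum_{B \ni v,\, \text{alive}} w(B)$. A single Maker claim of $m$ multiplies $w(B)$ by $(1+q)^{1/p}$ for every alive $B \ni m$, and hence raises $\Phi$ by exactly $d(m)\bigl((1+q)^{1/p}-1\bigr)$; a single Breaker claim of $n$ kills every alive $B \ni n$ and drops $\Phi$ by exactly $d(n)$. Thus the question reduces to balancing Breaker's drops against Maker's increments.

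The core step is to pair each Breaker batch with the Maker batch that immediately follows and show the net change of $\Phi$ across the pair is $\le 0$. Writing the dangers of Breaker's successive picks as $d_1^B \ge d_2^B \ge \dots \ge d_q^B$ (the sequence is monotone because killing alive sets only weakly decreases the dangers of the remaining vertices), every vertex still unclaimed at the end of Breaker's batch has danger at most $d_q^B$. Each subsequent Maker claim multiplies the weight of any alive set containing it by $(1+q)^{1/p}$, so the danger of any still-unclaimed vertex grows by at most that factor per Maker move; iterating gives $d_i^M \le d_q^B (1+q)^{(i-1)/p}$ for Maker's $i$-th pick. The geometric identity
\[
\bigl((1+q)^{1/p}-1\bigr)\sum_{i=1}^{p}(1+q)^{(i-1)/p} = (1+q)-1 = q
\]
then bounds Maker's total increase over the batch by $q\,d_q^B \le \sum_{j=1}^{q} d_j^B$, which is precisely Breaker's total drop.

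The very first move of the game needs a separate check, since the pairing above requires a Breaker batch preceding every Maker batch. If Maker moves first, his opening batch can at worst multiply $\Phi$ by $(1+q)$, raising it from below $\tfrac{1}{1+q}$ to below $1$; from then on the pairing argument makes $\Phi$ non-increasing across each (Breaker, Maker) pair. Since $\Phi$ is monotone within each player's batch, these per-round bounds then force $\Phi < 1$ at every intermediate state as well, ruling out Maker ever completing any $B$. The main technical obstacle I expect is the careful tracking of how dangers evolve during a Maker batch, so as to justify the bound $d_i^M \le d_q^B(1+q)^{(i-1)/p}$ uniformly and dovetail it with the geometric identity above; once that bookkeeping is in hand the rest of the argument is routine.
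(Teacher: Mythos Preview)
The paper does not supply its own proof of this theorem; it is quoted as a known result of Beck (generalising the Erd\H{o}s--Selfridge criterion) with a reference to \cite{Beck}, and is used as a black box. Your proposal reproduces what is essentially Beck's original potential-function argument, and the outline is correct: the weight $w(B)=(1+q)^{-(|B|-|B\cap M|)/p}$, greedy Breaker play on the danger $d(v)$, the geometric-sum identity bounding a Maker batch by $q\,d_q^B$, and the separate treatment of a possible opening Maker batch (using $d(m)\le\Phi$ to get $\Phi\le\Phi_0(1+q)<1$) are exactly the right ingredients. The bookkeeping you flag as the ``main technical obstacle'' --- that after Breaker's $q$-th pick every remaining danger is at most $d_q^B$, and each Maker move can at most multiply any individual danger by $(1+q)^{1/p}$ --- is straightforward once stated, so the argument goes through without difficulty.
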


Suppose that we are playing a $(1:b)$ Maker-Breaker game over a
board $V$ with the family $\mathcal{F}$ of winning sets, and let $a$ be some fixed
integer. It is sometimes convenient to partition the board into $a$
boards $V_1 \cup \cdots \cup V_a$ and to play a $(1:ab)$ game on
each board $V_i$ separately. That is, Maker will start by playing
the board $V_1$, and after playing board $V_i$, in the next round
will play board $V_{i+1}$ (index addition is modulo $a$). Note that
after Maker plays the board $V_1$ for the first time, and until
playing it for the second time, Breaker can claim at most $ab$
elements of $V_1$. Therefore Maker may assume to his/her
disadvantage that he/she is playing a $(1:ab)$ game on each board as
the second player. If one shows that Maker can claim elements so
that certain properties are satisfied for each board, then by
combining these properties, we may show in the end that the Maker's
elements altogether contain some winning set. When we say that we
{\em split the board}, we suppose that we partitioned the board into
some fixed number of boards as above.

We will use later the following concentration result (see, e.g.,
\cite[Theorem 2.10]{JaLuRu}). Let $A$ and $A'$ be sets such that $A'
\subset A$, and $|A| = N$, $|A'| = m$. Let $B$ be a subset of $A$ of
size $n$ chosen uniformly at random. Then the distribution of the
random variable $|B \cap A'|$ is called the \textit{hypergeometric
distribution with parameters $N,n$, and $m$}.

\begin{thm} \label{lemma_hypergeometric}
Let $\varepsilon$ be a fixed positive constant and let $X$ be a
random variable with hypergeometric distribution with parameters
$N,n$, and $m$. Then for all $t \ge 0$,
\[ P\big(|X - \mathbb{E}[X]| \geq t\big) \leq 2e^{-2t^2/n}. \]
\end{thm}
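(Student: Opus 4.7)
The plan is to reduce the hypergeometric case to the binomial case and then invoke the standard Chernoff--Hoeffding bound. Write $X = \sum_{i=1}^n Y_i$, where $Y_i$ is the indicator that the $i$-th element drawn (in a uniformly random ordering of the sample $B$) lies in $A'$. Each $Y_i$ is marginally Bernoulli with mean $m/N$, but the $Y_i$ are not independent. By contrast, if $Z_1,\ldots,Z_n$ are i.i.d.\ Bernoulli$(m/N)$, then $W := \sum_{i=1}^n Z_i \sim \mathrm{Bi}(n, m/N)$ has the same mean as $X$, namely $nm/N$.

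The key step is Hoeffding's classical MGF-domination inequality: for every real $s$,
\[ \mathbb{E}\bigl[e^{s(X - \mathbb{E}[X])}\bigr] \le \mathbb{E}\bigl[e^{s(W - \mathbb{E}[W])}\bigr]. \]
This says that sampling without replacement concentrates at least as well as sampling with replacement. The standard proof is by a convexity/exchangeability argument: one shows that the conditional expectation $\mathbb{E}[e^{sY_i}\mid Y_1,\ldots,Y_{i-1}]$ is dominated (after centering) by $\mathbb{E}[e^{sZ_i}]$, using that the conditional distribution of $Y_i$ given the history is again Bernoulli with some parameter whose effect on the centered MGF is maximized by $m/N$ thanks to convexity of $s \mapsto e^{su}$.

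With the MGF domination in hand, the right-hand side is controlled by Hoeffding's inequality for sums of independent $[0,1]$-valued variables, which yields $\mathbb{E}[e^{s(W-\mathbb{E}[W])}] \le e^{ns^2/8}$. Applying Markov's inequality to $e^{s(X-\mathbb{E}[X])}$ and optimizing over $s$ gives $\mathbb{P}(X - \mathbb{E}[X] \ge t) \le e^{-2t^2/n}$; the symmetric bound for the lower tail follows by replacing $s$ with $-s$ in the argument, and a union bound gives the factor of $2$.

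The main obstacle is the MGF-domination step, since unlike the independent case it requires care with the sampling-without-replacement structure. However this is exactly the content of Hoeffding's 1963 theorem, which is reproduced in \cite[Theorem~2.10]{JaLuRu}, so the entire argument reduces to a citation of this well-known lemma once the setup above is in place.
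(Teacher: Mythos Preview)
Your proposal is correct, and in fact gives more than the paper does: the paper states this theorem without proof, citing it directly as \cite[Theorem~2.10]{JaLuRu}. Your sketch of the Hoeffding MGF-domination argument followed by the standard Chernoff--Hoeffding optimization is the standard route to this bound and ultimately lands on the same citation, so the two are aligned.

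One minor remark: your informal description of why the conditional MGF is dominated (``maximized by $m/N$ thanks to convexity'') is not quite how Hoeffding's argument runs; the actual proof is a convexity argument over the exchangeable joint law rather than a termwise conditional bound. But since you defer to the reference for this step anyway, this does not affect correctness.
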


Let $G$ be a Dirac graph, and as in the previous section, we begin
by applying Lemma \ref{lem:dirac} with $\alpha=\frac{1}{2^{40}}$ and
$\gamma=\frac{1}{96}$ to classify Dirac graphs into three types. We
will show case by case that Maker can win a $(1:b)$ Hamiltonicity
Maker-Breaker game played on $G$ if $b \le \frac{cn}{\log n}$ for
some small positive constant $c$.

\subsection{First case}

We first assume that $e(A,B)\ge \alpha n^{2}$ for all
half-sets $A$ and $B$ as in (i) of Lemma \ref{lem:dirac}.

\begin{lem} \label{lem_game_case1_lem1}
There exist positive reals $\varepsilon, c$, and $\beta$
the following holds for every $b \le \frac{cn}{\log n}$.
In a $(1:b)$ Maker-Breaker game played on $G$,
Maker can construct a graph $M_1$ with
property $\mathcal{RE}(\frac{1}{2}+\varepsilon)$ in the first $\beta n \log n$ rounds.
\end{lem}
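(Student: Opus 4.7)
The plan is to mirror the random-graph argument of Lemma \ref{lem_case1_lem1}: I will have Maker build $M_1$ so that it is a half-expander with parameters $\varepsilon$ and $r = \beta \log n / 32$ in the sense of Definition \ref{def:expander}, and then invoke Lemma \ref{lem_halfexpander} to obtain property $\mathcal{RE}(\frac{1}{2}+\varepsilon)$. The parameters $\varepsilon$ and $\beta$ should be chosen exactly as in Lemma \ref{lem_case1_lem1}, and $c$ will be chosen small later in terms of $\beta$. Concretely, I would verify the same four properties that appear in the proof of Lemma \ref{lem_case1_lem1}: (1) minimum degree at least $\frac{np'}{3}$, (2) no dense small-set interaction, (3) expansion for large sets, and (4) at least $2n+1$ edges between any two nearly-half sets, where $p' = \beta \log n / n$.

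The strategy I propose for Maker is the \emph{random Maker strategy}: in each of the $\beta n \log n$ rounds, Maker claims a uniformly random free edge of $G$. Fix any Breaker strategy. Since Breaker claims at most $b \cdot \beta n \log n \leq c \beta n^2$ edges, which is a $4 c \beta$-fraction of $e(G) \ge n^2/4$, for $c \beta$ sufficiently small at least half of $E(G)$ remains free throughout the game. Consequently, conditional on any game history, every fixed edge of $G$ ends up in $M_1$ with probability $\Theta(\log n / n) = \Theta(p')$, matching the density of $G_{p'}$ in Lemma \ref{lem_case1_lem1} up to a constant factor. For any fixed pair of vertex sets $X, Y$, the random variable $e_{M_1}(X, Y)$ is governed by a hypergeometric-type distribution with mean of order $e_G(X, Y) \cdot p'$.

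The four properties then follow by repeating the estimates of Lemma \ref{lem_case1_lem1}, with Chernoff's bound for independent Bernoullis replaced by the hypergeometric concentration inequality (Theorem \ref{lemma_hypergeometric}). The key adjustment is that, because at least half of $E(G)$ is always free, the conditional single-edge probabilities differ from the $G_{p'}$ analogue by at most a multiplicative constant, so all of the union-bound calculations in Lemma \ref{lem_case1_lem1} survive with the constants appropriately enlarged. This shows that, against any Breaker strategy, the random Maker strategy produces a half-expander $M_1$ with probability $1 - o(1)$.

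To derandomize I would invoke the standard argument for finite perfect-information games. The game here is finite and deterministic, so by Zermelo's theorem exactly one of Maker and Breaker has a deterministic winning strategy. If Breaker had one, say $\sigma_B^*$, then against $\sigma_B^*$ every realization of the random Maker strategy would result in a Maker loss, contradicting the $1 - o(1)$ success bound above. Hence Maker has a deterministic strategy achieving $M_1 \in \mathcal{RE}(\frac{1}{2}+\varepsilon)$ in the first $\beta n \log n$ rounds. The main obstacle is Property (2) --- the upper bound on $e_{M_1}(X, Y)$ for small $X, Y$, which is already the most delicate step in the random-graph proof; I expect it to require the hypergeometric concentration be combined with a careful union bound over small pairs, and to absorb the constant-factor inflation of the per-edge probability caused by Breaker's adaptive play without breaking the geometric decay of the summands.
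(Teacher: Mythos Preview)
Your approach has a genuine gap: the random Maker strategy does \emph{not} succeed against every Breaker strategy, so the Zermelo derandomization step cannot be invoked. Concretely, consider the Breaker strategy that targets a single vertex $v$ and claims only edges incident to $v$ until none remain. Since $v$ has at most $n$ incident edges and $b = \frac{cn}{\log n}$, Breaker exhausts all free edges at $v$ within $T = O(\log n)$ rounds. During those $T$ rounds Maker picks $T$ uniformly random free edges; each has probability at most $\frac{\deg(v)}{|E_{\mathrm{free}}|} = O(1/n)$ of being incident to $v$, so the expected Maker-degree of $v$ after round $T$ is $O(\log n / n) = o(1)$. With probability $1-o(1)$ the vertex $v$ ends up with Maker-degree zero, and thereafter no further Maker edge can touch $v$. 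This immediately violates condition~(i) of Definition~\ref{def:expander} for the singleton $X=\{v\}$, hence $M_1$ is not a half-expander. Your assertion that ``every fixed edge of $G$ ends up in $M_1$ with probability $\Theta(\log n/n)$'' is therefore false for the $\Theta(n)$ edges incident to $v$: against this Breaker their inclusion probability is $O(\log n / n^2)$.

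The paper avoids this obstacle by not using a random strategy at all. Following Krivelevich--Szab\'o, it encodes each half-expander condition as a family of large hyperedges on the board $E(G)$ and has Maker play the role of \emph{Breaker} in the resulting $(b:1)$ game; Beck's criterion (Theorem~\ref{thm:beck}) then supplies a deterministic strategy for Maker to hit every hyperedge. In particular, for condition~(i) with $|X|=1$ the relevant hyperedges have size $\Omega(n)$, which is exactly what lets Maker defeat the vertex-isolation attack above: one edge per $\Omega(n)$-sized set suffices, and this is within reach of the potential-function strategy even though a random choice is not. If you want to salvage a probabilistic argument you would at minimum need a separate deterministic phase (e.g., a box-game or minimum-degree game) in which Maker secures $\Theta(\log n)$ edges at every vertex before switching to random play; the purely random strategy as stated cannot work.
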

\begin{proof}
Let $\varepsilon = \min\{\varepsilon_{\ref{lem_halfexpander}},\left(\frac{\alpha}{8}\right)^5\}$
and $C = 16\varepsilon^{-3}$. Let $c = \min\{\frac{1}{42C},
\frac{\alpha}{30}\}$, $\beta = \frac{1}{2c}$, and $b_0 =
\frac{cn}{\log n}$. We will show that in a $(1:b_0)$ Maker-Breaker
game played on $G$, Maker can construct a graph with property
$\mathcal{RE}(\frac{1}{2}+\varepsilon)$. If this indeed is true,
then since at most $\beta n \log n$ rounds can be played, we can see
that for all $b \le b_0$, in a $(1:b)$ Maker-Breaker game played on
$G$, Maker can construct a graph $M_1$ with property
$\mathcal{RE}(\frac{1}{2}+\varepsilon)$ in the first $\beta n \log
n$ rounds. This is because Maker can always trick himself/herself
that he/she is playing a $(1:b_0)$ game, by arbitrarily adding
$b_0-b$ `fake' Breaker edges. If Breaker later happens to claim some
fake Breaker edge, then Maker will replace that fake edge with
another fake edge which has not yet been claimed. In this way, Maker
is essentially playing a $(1:b_0)$ game, and thus can construct a
graph $M_1$ with property $\mathcal{RE}(\frac{1}{2}+\varepsilon)$ in
the first ${n \choose 2}/b_0 \leq \beta n \log n$ rounds.

Let $r = C \log n$ and note that by Lemma \ref{lem_halfexpander},
it suffices to show that Maker can construct a half-expander with
parameters $\varepsilon$ and $r$. A naive approach directly using
Beck's criteria (Theorem \ref{thm:beck}) fails for our range of
bias, and thus we use the strategy of Krivelevich and Szab\'o
\cite{KrSz}. In this strategy, we construct an auxiliary hypergraph
whose vertex set is the edge set of $G$, and play a Maker-Breaker
game on this hypergraph. The board is the vertex set of the
hypergraph, and the winning sets are the edges of the hypergraph (to
avoid confusion, we name the players of this game as NewMaker and
NewBreaker). Maker (resp. Breaker) of the original game will play
NewBreaker (resp. NewMaker) in the auxiliary game. By doing so, we
wish to establish the fact that Maker can strategically claim edges
so that Maker's graph satisfies the conditions of a half-expander.

For each $\ell=1, \cdots, n$, let $V_{\ell, 1}, \cdots, V_{\ell,
\ell}$ be fixed disjoint vertex subsets of $V(G)$ of size $\lfloor
\frac{n}{\ell}\rfloor$, and for each index subset $J \subset
[\ell]$, let $V_{\ell, J} = \cup_{j \in J} V_{\ell,j}$. Let $H_1,
H_2$ and $H_3$ be hypergraphs that have the edge set of $G$ as their
vertex set. The edge set of $H_1$ is constructed as follows: for
each set of vertices $X \subset V(G)$ of size $|X| = i \le
\frac{\varepsilon n}{r}$ and each index set $J \subset [3ri]$ of size $|J|
= 2ri$, place a hyperedge consisting of the edges of $G$ that have
one endpoint in $X$, and the other endpoint in $V_{3ri,J} \setminus
X$. Since $G$ has minimum degree at least $\frac{n}{2}$, the size of
a hyperedge constructed this way is at least
\[|X| \cdot \left(|J|\cdot |V_{3ri,1}| - \frac{n}{2} - |X| \right)
\ge i \cdot \left(2ri \Big(\frac{n}{3ri} - 1\Big) - \frac{n}{2} - i \right)
\ge \left(\frac{1}{6} - o(1)\right) ni \ge \frac{ni}{7},\]
and the number of hyperedges of $H_1$ constructed from
subsets of vertices of size $i$ is ${n \choose i}{3ri \choose 2ri}$.
Assume that Maker claims at least one edge (vertex of the hypergraph)
in each of the hyperedges as above. Then it follows that for each set
of vertices $X$ of size at most $\frac{\varepsilon n}{r}$,
Maker's graph has $|N(X)| > ri = r|X|$, as otherwise, we can find
an index set $J \subset [3ri]$ of size $|J| = 2ri$ such that there are
no edges which have one endpoint in $X$ and the other endpoint in $V_{3ri, J}$.
Thus in such a situation, Maker's graph will satisfy Condition (i) of
Definition \ref{def:expander}.

The edge set of $H_2$ is constructed as follows: for each pair
of sets of vertices $X,Y \subset V(G)$
of sizes $|X| = \frac{n}{\varepsilon r}$ and
$|Y| = (\frac{1}{2} + \varepsilon)n$, place a hyperedge
consisting of the edges of $G$
that have one endpoint in $X$, and the other endpoint in
$Y \setminus X$. Since $G$ has minimum degree at least $\frac{n}{2}$,
the size of a hyperedge constructed this way is at least
\[|X| \cdot \left(|Y| - \frac{n}{2} - |X| \right)
\ge \frac{n}{\varepsilon r} \cdot \left(\varepsilon n - \frac{n}{\varepsilon r} \right)
\ge \frac{n^2}{2r}, \]
and the number of such hyperedges is at most $2^{2n}$.
Moreover, if Maker can claim at least one
edge in each of the hyperedges of $H_2$, then Maker's graph will satisfy
Condition (ii) of Definition \ref{def:expander}.

For every pair of disjoint sets
of vertices $X$ and $Y$ such that $|X|,|Y| \ge (\frac{1}{2} - \varepsilon^{1/5})n$,
let $E_{X,Y}$ be the set of edges that have one endpoint in $X$ and the other
endpoint in $Y$ (edges within $X\cap Y$ are counted once). 
The edge set of $H_3$ is as follows: for every $X$ and $Y$ as
above, place a hyperedge over every subset of $E_{X,Y}$ of size exactly
$|E_{X,Y}| - 2n$. Since 
$\frac{3}{4}\alpha n^2\le \alpha n^2-2\varepsilon^{1/5}n^2 \le e_{G}(X,Y) \le n^2$, 
each such hyperedge has size at least 
$|E_{X,Y}| \ge \frac{1}{2}e_G(X,Y) - 2n \ge \frac{\alpha n^2}{3}$, and the total
number of hyperedges of $H_3$ is at most
\[ 2^{2n} \cdot {n^2 \choose 2n} \le 2^{2n} \cdot n^{4n} \le e^{5n\log n}. \]
Moreover, if Maker can claim at least one
edge in each of the hyperedges of $H_3$, then Maker's graph will satisfy
Condition (iii) of Definition \ref{def:expander}.

Let $H = H_1 \cup H_2 \cup H_3$ and consider a $(b_0:1)$
Maker-Breaker game played on the hypergraph $H$ (where we name the
players as NewMaker and NewBreaker in order to distinguish the
players from our game). By the arguments above, it suffices to
show that the auxiliary game is NewBreaker's win in order to
establish our lemma. This will be done by using Theorem
\ref{thm:beck} with $p=b_0$ and $q=1$. We thus would like to show
that
\begin{eqnarray}
\label{eq:keybeck}
\sum_{i=1}^{3} \sum_{e \in E(H_i)} 2^{-|e|/b_0} = \sum_{i=1}^{3} \sum_{e \in E(H_i)} 2^{-|e|\log n/(cn)}
\end{eqnarray}
is at most $\frac{1}{2}$. For the hypergraph $H_1$, using the fact
$c \le \frac{1}{42C}$, we have
\begin{align*}
\sum_{e \in E(H_1)} 2^{-|e|/b_0} &\le \sum_{i=1}^{\varepsilon n/r} {n \choose i}{3ri \choose 2ri} 2^{-ni\log n/(7cn)}
\le \sum_{i=1}^{\varepsilon n/r} n^{i} \cdot 2^{3ri} \cdot  2^{-i\log n/(7c)} \\
&\le \sum_{i=1}^{\varepsilon n/r} e^{(3C+1 - \frac{1}{7c}) i \log n } \le \frac{1}{4}.
\end{align*}
For hypergraphs $H_2$ and $H_3$, using the fact $c \le
\min\{\frac{1}{42C},\frac{\alpha}{30}\}$, we have
\begin{align*}
\sum_{e \in E(H_2)} 2^{-|e|/b_0} + \sum_{e \in E(H_3)} 2^{-|e|/b_0}
  &\le 2^{2n} \cdot 2^{-n^2 \log n/(2rcn)} + e^{5n\log n} \cdot 2^{-\alpha n^2 \log n/(3cn)} \\
  &\le 2^{2n} \cdot 2^{-n/(2cC)} + e^{5n\log n} \cdot 2^{-\alpha n\log n/(3c)} \\
  &\le \frac{1}{4}.
\end{align*}
Therefore, by combining the two inequalities, we get $\eqref{eq:keybeck} \le \frac{1}{2}$.
\end{proof}

\begin{lem} \label{lem_game_case1_lem2}
For every fixed positive reals $\varepsilon$ and $\beta$, there
exists a positive constant $c = c(\varepsilon, \beta)$ satisfying
the following for every $b \le \frac{cn}{\log n}$. In a $(1:b)$
Maker-Breaker game played on $G$, if Maker constructs a graph $M_1$
with property $\mathcal{RE}(\frac{1}{2} + \varepsilon)$ in the first
$\beta n \log n$ rounds, then he in the remaining rounds can
construct a graph $M_2$ which complements $M_1$.
\end{lem}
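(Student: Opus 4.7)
The plan is to mimic the random-graph argument of Lemma \ref{lem_case1_lem2} in the game setting, replacing the union bound by Beck's criterion (Theorem \ref{thm:beck}). For each pair $(P,e)$, where $P$ is a path on $V(G)$ and $e$ is an edge of $P$ such that $M_1 \cup P$ contains no longer path through $e$, the property $\mathcal{RE}(\tfrac{1}{2}+\varepsilon)$ of $M_1$ supplies sets $S_P$ and $\{T_v\}_{v \in S_P}$, each of size at least $(\tfrac{1}{2}+\varepsilon)n$. Set
\[
B_{P,e} = \bigl\{\{v,w\} \in E(G) : v \in S_P,\ w \in T_v\bigr\}.
\]
If Maker's final graph $M_1 \cup M_2$ intersects each $B_{P,e}$ in at least one edge, then condition (ii) of Definition \ref{def:extension} holds for every $(P,e)$ and $M_2$ complements $M_1$; so it suffices to show that, in the remaining game, Maker can claim an edge from every $B_{P,e}$.

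First I estimate each $|B_{P,e}|$. By the minimum degree condition on $G$, every $v \in S_P$ satisfies $|N_G(v) \cap T_v| \geq |T_v| - n/2 \geq \varepsilon n$, so after halving for double counting, $|B_{P,e}| \geq \tfrac{\varepsilon n^2}{4}$. The number of admissible $(P,e)$ is at most $n^2 \cdot n!$, as in Lemma \ref{lem_case1_lem2}. Now pass to the blocker formulation: view the remaining game as a $(b:1)$ Maker-Breaker game in which the original Breaker plays the role of auxiliary Maker, the original Maker plays the role of auxiliary Breaker, and the auxiliary Maker aims to claim all edges of some $B_{P,e}$. During the first $\beta n \log n$ rounds Breaker has claimed at most $b \beta n \log n \leq c \beta n^2$ edges; choosing $c \leq \varepsilon/(40\beta)$ ensures that for every $(P,e)$ at least $\tfrac{\varepsilon n^2}{8}$ edges of $B_{P,e}$ remain unclaimed when the auxiliary game starts. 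Applying Theorem \ref{thm:beck} with $p=b$, $q=1$, the Beck sum is bounded by
\[
n^2 \cdot n! \cdot 2^{-\varepsilon n^2/(8b)} \;\leq\; \exp\!\left(\bigl(2 - \tfrac{\varepsilon \ln 2}{8c}\bigr) n \log n\right),
\]
which drops below $1/2$ once $c = c(\varepsilon,\beta)$ is chosen sufficiently small. Beck's theorem then supplies a winning strategy for the auxiliary Breaker, that is, for the original Maker, producing the desired $M_2$.

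The main obstacle is technical rather than conceptual: Beck's theorem is stated for games played from the empty position, while here we apply it mid-game. I handle this by restricting every winning set to the currently unclaimed edges and absorbing the first-phase losses into the lower bound on $|B_{P,e}|$ via a small enough choice of $c$; since the exponential decay in the Beck bound easily dominates the $n^2 \cdot n!$ combinatorial factor, no delicate optimization is needed. A minor subtlety is that Definition \ref{def:extension}(ii) allows the witnessing edge $\{v,w\}$ to come from $P$ itself, but since Maker cannot choose $P$, it is cleaner, and strictly stronger, to insist that $\{v,w\}$ lie in $M_1 \cup M_2 \subseteq E(G)$, which is precisely what membership in $B_{P,e}$ certifies.
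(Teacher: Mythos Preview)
Your proposal is correct and follows essentially the same approach as the paper: set up an auxiliary $(b:1)$ game whose winning sets are the edge sets $B_{P,e}$ restricted to the unclaimed board, lower-bound each by $\Theta(\varepsilon n^2)$ using the minimum-degree condition and the sizes of $S_P$, $T_v$, absorb the first-phase losses by taking $c$ small, and finish with Beck's criterion against the $n^2\cdot n!$ count of pairs $(P,e)$. The only cosmetic difference is that the paper subtracts all edges claimed in the first phase (Maker's and Breaker's) before bounding the winning sets, whereas you subtract only Breaker's; your version is actually slightly sharper, since any $(P,e)$ with $M_1\cap B_{P,e}\neq\emptyset$ is already handled and need not appear in the auxiliary game.
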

\begin{proof}
Let $c \le \min\{c_{\ref{lem_game_case1_lem1}}(\varepsilon),
\frac{\varepsilon^2}{2\beta},\frac{\varepsilon}{8}\}$. Note that since we played $\beta
n\log n$ rounds to construct $M_1$, the number of edges claimed 
so far is at most $c \beta n^2 + \beta n\log n\le 2c\beta n^2 \le \varepsilon^2n^2$. Let $P$ be a path with a
fixed edge $e$ such that there is no path longer than $P$ containing
$e$ in the graph $P \cup M_1$. Then there exists a set $S_P \subset
V(P)$ of size $|S_P| \ge (\frac{1}{2} + \varepsilon)n$ such that for
every $v \in S_P$ there exists a set $T_v$ of size $|T_v| \ge
(\frac{1}{2} + \varepsilon)n$ such that for all $w \in T_v$, there
exists a path of the same length as $P$ containing $e$, starting at
$v$ and ending at $w$.

Since $G$ has minimum degree at least $\frac{n}{2}$, we know that
for each vertex $v \in S_P$, at least $\varepsilon n$ vertices in
$T_v$ form an edge with $v$ in the graph $G$. Since at most 
$\varepsilon^2 n^2$ edges have been claimed so
far, in total we have at least
\[ \frac{1}{2} \cdot \left(\left(\frac{1}{2} + \varepsilon \right)n \cdot \varepsilon n - \varepsilon^2 n ^2\right) \ge \frac{\varepsilon}{4}n^2 \]
edges such that if Maker can claim at least one of these edges, then
he can extend $P$. Consequently, if Maker can do this for all
paths $P$, then we prove our lemma (the factor $\frac{1}{2}$ comes
from the fact that the same pair $(v,w)$ can be counted at most twice,
once as $v \in S_P, w \in T_w$ and once as $w \in S_P, v \in T_w$).

There are at most $n^2 \cdot n!$ paths that we need to consider, and
for each path we have $\frac{\varepsilon}{4}n^2$ edges where Maker
has to claim at least one of these edges. Consider the
following Maker-Breaker game (where we name the players as NewMaker
and NewBreaker in order to distinguish the players from our game).
The board is defined as the edges which have not been claimed in the
first $\beta n\log n$ rounds. The winning sets are defined as sets
of at least $\frac{\varepsilon}{4}n^2$ edges for each non-extendable
path $P$ with a fixed edge which we described above. Note that there
are at most $n^2 \cdot n!$ winning sets. It suffices to show that
NewBreaker wins this new game, since our Maker will play
NewBreaker's role here (thus he/she wants to claim at least one edge
from each of the winning sets). We can use Beck's criterion, Theorem
\ref{thm:beck}, with $p = \frac{cn}{\log n}$ and $q=1$ to see that
the newly defined game is indeed NewBreaker's win since $c\le \frac{\varepsilon}{8}$:
\[
\sum_{B \in \mathcal{F}} 2^{-|B|/p} \le  n^2 \cdot n! \cdot 2^{-(\varepsilon/4)n^2/(cn/\log n)} < e^{n\log n}
2^{-(\varepsilon/(4c))n \log n} < \frac{1}{2}. \]
\end{proof}

\begin{thm}
There exists a constant $c$ such that the following holds for every
$b \le \frac{cn}{\log n}$. If $G$ is a Dirac graph satisfying $(i)$
of Lemma \ref{lem:dirac}, then Maker has a winning strategy for the
$(1:b)$-Maker-Breaker Hamiltonicity game.
\end{thm}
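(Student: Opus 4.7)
The plan is to combine the two preceding lemmas by means of Proposition \ref{prop:rotationextension} in a straightforward two-phase strategy for Maker. First I would set constants so that both Lemma \ref{lem_game_case1_lem1} and Lemma \ref{lem_game_case1_lem2} apply simultaneously: take $\varepsilon = \varepsilon_{\ref{lem_game_case1_lem1}}$, $\beta = \beta_{\ref{lem_game_case1_lem1}}$, and then set $c = \min\{c_{\ref{lem_game_case1_lem1}},\, c_{\ref{lem_game_case1_lem2}}(\varepsilon,\beta)\}$, so that for every $b \le \frac{cn}{\log n}$ both lemmas are applicable to the $(1:b)$ game.

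Next I would describe Maker's strategy as follows. In the first phase, consisting of the initial $\beta n \log n$ rounds, Maker plays according to the strategy guaranteed by Lemma \ref{lem_game_case1_lem1} and ends up with a subgraph $M_1$ having property $\mathcal{RE}\bigl(\tfrac12 + \varepsilon\bigr)$. In the second phase, consisting of all remaining rounds, Maker switches to the strategy guaranteed by Lemma \ref{lem_game_case1_lem2} and claims a subgraph $M_2$ that complements $M_1$ in the sense of Definition \ref{def:extension}. Note that in the second phase Maker merely needs to make sure that for every path $P$ with a fixed edge, at least one "good" extending edge is claimed; Lemma \ref{lem_game_case1_lem2} is precisely designed to produce such an $M_2$ against an arbitrary Breaker strategy during the remainder of the game.

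Having produced $M_1$ and $M_2$ as above, Proposition \ref{prop:rotationextension} applies immediately and tells us that $M_1 \cup M_2$ is Hamilton-connected, and in particular Hamiltonian. Since $M_1 \cup M_2$ is contained in Maker's final graph, Maker wins the $(1:b)$ Hamiltonicity game on $G$.

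The main (and really only) subtle point is to verify that the constants from the two lemmas can be chosen compatibly and that the accounting of the two phases does not cause trouble. Lemma \ref{lem_game_case1_lem1} already absorbs the case of a smaller bias by the standard "fake-edge" trick, so whatever $b \le \frac{cn}{\log n}$ Maker faces in phase one can be handled. In phase two the bound on $c$ in Lemma \ref{lem_game_case1_lem2} depends only on $\varepsilon$ and $\beta$, both of which are fixed from Lemma \ref{lem_game_case1_lem1}; thus taking $c$ to be the minimum of the two thresholds works. All other steps are immediate appeals to results proved earlier in the section, so no further technical work is needed.
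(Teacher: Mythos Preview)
Your proposal is correct and essentially identical to the paper's own proof: the paper also sets $\varepsilon = \varepsilon_{\ref{lem_game_case1_lem1}}$, $\beta = \beta_{\ref{lem_game_case1_lem1}}$, $c = \min\{c_{\ref{lem_game_case1_lem1}}, c_{\ref{lem_game_case1_lem2}}(\varepsilon,\beta)\}$, invokes Lemma~\ref{lem_game_case1_lem1} for the first $\beta n \log n$ rounds to build $M_1 \in \mathcal{RE}(\tfrac12+\varepsilon)$, then Lemma~\ref{lem_game_case1_lem2} for the remaining rounds to build a complementing $M_2$, and finishes with Proposition~\ref{prop:rotationextension}.
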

\begin{proof}
Let $\varepsilon = \varepsilon_{\ref{lem_game_case1_lem1}}$,
$\beta = \beta_{\ref{lem_game_case1_lem1}}$
and let $c = \min\{c_{\ref{lem_game_case1_lem1}},  c_{\ref{lem_game_case1_lem2}}(\varepsilon, \beta) \}$.
By Lemma \ref{lem_game_case1_lem1},
Maker can construct a graph $M_1$ with property $\mathcal{RE}(\frac{1}{2} +
\varepsilon)$ in the first $\beta n \log n$ rounds. Then by Lemma
\ref{lem_game_case1_lem2}, Maker can construct a graph $M_2$ which
complements $G_1$ in the remaining rounds. Therefore by Proposition
\ref{prop:rotationextension}, Maker can construct a Hamilton cycle
and win the game.
\end{proof}

\subsection{Second case}

We assume that there exists a set $A$ of size $\frac{n}{2} \le|A|\le
(\frac{1}{2}+16\alpha)n$ such that $e(A,\overline{A})\le 6\alpha
n^{2}$, and the induced subgraphs on both $A$ and $\overline{A}$
have minimum degree at least $\frac{n}{5}$, as in (ii) of Lemma
\ref{lem:dirac}. The same computation as in \eqref{eq:lem_case2}
shows that
\begin{align} \label{eq:lem_game_case2}
e(A) \ge {|A| \choose 2} - 11\alpha n^2
\quad \textrm{and} \quad e(\overline{A}) \ge {|\overline{A}| \choose
2} - 6\alpha n^2.
\end{align}

\begin{lem} \label{lem_game_case2_lem1}
There exist positive reals $c$ and $\beta$ satisfying the following
for every positive $b \le \frac{cn}{\log n}$. In a $(1:b)$
Maker-Breaker game played on the board $G[A]$, Maker can construct a
graph with property $\mathcal{RE}(\frac{1}{6})$ in the first $\beta
n \log n$ rounds (similar for $G[\overline{A}]$).
\end{lem}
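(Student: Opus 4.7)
The plan is to adapt the strategy of Lemma \ref{lem_game_case1_lem1}. First, Lemma \ref{lem:expander_rotationextension} reduces the task to showing that Maker can construct on $A$ an expander with parameters $\varepsilon = \frac{1}{4}$ and $r = 2^{16}$ in the sense of Definition \ref{def:realexpander}, viewing $G[A]$ as a graph on $|A|$ vertices. Unlike in Lemma \ref{lem_game_case1_lem1}, the high density of $G[A]$ allows the target $r$ to be a \emph{constant}, which makes all computations cleaner.

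I would then follow the Krivelevich--Szab\'o framework: build an auxiliary hypergraph $H = H_1 \cup H_2$ on the vertex set $E(G[A])$ whose hyperedges record potential violations of the two expansion conditions, and apply Beck's criterion (Theorem \ref{thm:beck}) to the $(b:1)$ game on $H$ with the two players' roles swapped. For $H_1$, imitate the construction in Lemma \ref{lem_game_case1_lem1}: for each $1 \le i \le |A|/r^{3/2}$, partition $A$ into $3ri$ near-equal blocks $A_{3ri,1},\dots,A_{3ri,3ri}$, and for each $X \subset A$ with $|X| = i$ and $J \subset [3ri]$ with $|J| = 2ri$, place a hyperedge consisting of the $G[A]$-edges from $X$ to $(\bigcup_{j \in J} A_{3ri,j}) \setminus X$. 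For $H_2$, for each pair $X, Y \subset A$ with $|X| = |A|/r^{3/4}$ and $|Y| = |A|/4$, place a hyperedge consisting of the $G[A]$-edges between $X$ and $Y$. If Maker claims at least one edge in every hyperedge of $H$, then his graph is an expander with the target parameters.

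The size estimates leverage the density of $G[A]$: the minimum-degree bound $\delta(G[A]) \ge n/5$ and the near-clique bound $e(A) \ge \binom{|A|}{2} - 11\alpha n^2$ from \eqref{eq:lem_game_case2}. For $H_1$, each vertex of $X$ has at least $|A_{3ri,J}| - (|A| - n/5) \ge n/31$ neighbors in $A_{3ri,J}$ (using $|A_{3ri,J}| \approx 2|A|/3$ together with $|A| \le (\frac{1}{2} + 16\alpha)n$), yielding hyperedges of size at least $ni/32$. For $H_2$, we get $e_G(X,Y) \ge |X||Y| - 22\alpha n^2 \ge n^2/2^{17}$ upon plugging in the sizes with $r = 2^{16}$ and $\alpha = 2^{-40}$. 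With $|E(H_1)| \le \sum_i \binom{|A|}{i} \binom{3ri}{2ri}$ and $|E(H_2)| \le 2^{2n}$, Theorem \ref{thm:beck} with $p = b$ and $q = 1$ would bound the relevant sum by
\[
\sum_{e \in E(H)} 2^{-|e|/b} \le \sum_{i \ge 1} \bigl( n \cdot 2^{3r} \cdot n^{-1/(32 c \log 2)} \bigr)^i + 2^{2n} \cdot 2^{-n \log n / (2^{17} c)},
\]
which is below $\frac{1}{2}$ once $c$ is chosen small enough (say $c < 1/100$): the first sum is geometric with ratio $n^{-\Omega(1)}$, and the second is $o(1)$ since $\log n / c \gg 2^{17}$. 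This gives NewBreaker a winning strategy, so Maker can enforce the expander conditions within $\binom{|A|}{2}/b \le \beta n \log n$ rounds (with $\beta = 1/c$), using the fake-edge padding trick from Lemma \ref{lem_game_case1_lem1} to reduce from the critical bias $cn/\log n$ to any $b \le cn/\log n$. An identical argument handles $G[\overline{A}]$, using the symmetric bound $e(\overline{A}) \ge \binom{|\overline{A}|}{2} - 6\alpha n^2$. The main obstacle is purely the careful choice of constants so that Beck's sum closes, which is routine given that $r$ and $\alpha$ are fixed; no new conceptual ingredient beyond those of Lemma \ref{lem_game_case1_lem1} is required.
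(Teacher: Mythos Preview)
Your proposal is correct and follows essentially the same approach as the paper's proof: reduce via Lemma~\ref{lem:expander_rotationextension} to building an expander with constant parameters, encode the two expansion conditions as hypergraphs $H_1,H_2$, and apply Beck's criterion to the dual game. The paper makes the same moves with slightly different constants (it takes $r=2^{20}$ and uses a $10ri/9ri$ block split for $H_1$, which makes the minimum-degree calculation a bit more comfortable; your $3ri/2ri$ split with $r=2^{16}$ gives a hyperedge lower bound closer to $ni/34$ than your stated $ni/32$, but this only shifts the threshold for $c$ and does not affect the argument).
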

\begin{proof}
Let $c$ be a small enough constant depending on $\alpha$. Let $\beta
= \frac{1}{2c}$ and $b_0 = \frac{cn}{\log n}$. We will show that in
a $(1:b_0)$ Maker-Breaker game played on $G[A]$, Maker can construct
a graph with property $\mathcal{RE}(\frac{1}{2}+\varepsilon)$
(similar proof can be used to establish the statement for
$G[\overline{A}]$). As in Lemma \ref{lem_game_case1_lem1}, this will
imply that for all $b \le b_0$, in a $(1:b)$ Maker-Breaker game
played on $G$, Maker can construct a graph $M_1$ with property
$\mathcal{RE}(\frac{1}{6})$ in the first $\beta n \log n$ rounds.
Note that by Lemma \ref{lem:expander_rotationextension}, it
suffices to show that Maker can construct a half-expander with
parameters $\frac{1}{4}$ and $r = 2^{20}$. We will construct an
auxiliary hypergraph whose vertex set is the edge set of $G$, and
play a Maker-Breaker game on this hypergraph (we name the players of
this game as NewMaker and NewBreaker). Maker (resp. Breaker) of the
original game will play NewBreaker (resp. NewMaker) in the auxiliary
game.

For each $\ell=1, \cdots, n$, let $V_{\ell, 1}, \cdots, V_{\ell,
\ell}$ be fixed disjoint vertex subsets of $V(G)$ of size $\lfloor
\frac{|A|}{\ell}\rfloor$, and for each index subset $J \subset
[\ell]$, let $V_{\ell, J} = \cup_{j \in J} V_{\ell,j}$. Let $H_1$
and $H_2$ be hypergraphs that have the edge set of $G$ as their
vertex set. The edge set of $H_1$ is constructed as follows: for
each set of vertices $X \subset V(G)$ of size $|X| = i \le
\frac{n}{r^{3/2}}$ and each index set $J \subset [10ri]$ of size
$|J| = 9ri$, place a hyperedge consisting of the edges of $G$ that
have one endpoint in $X$, and the other endpoint in $V_{10ri,J}
\setminus X$. Since $G$ has minimum degree at least $\frac{n}{5}$,
the size of a hyperedge constructed this way is at least
\[|X| \cdot \left(|J|\cdot |V_{10ri,1}| - (|A| - \frac{n}{5}) - |X| \right)
\ge i \cdot \left(9ri \Big(\frac{|A|}{10ri} - 1\Big) -
\frac{4|A|}{5} - i \right) \ge \frac{|A|i}{11},\] and the number of
hyperedges of $H_1$ constructed from subsets of vertices of size
$i$ is ${|A| \choose i}{10ri \choose 9ri}$. Assume that Maker claims
at least one edge (vertex of the hypergraph) in each of the
hyperedges as above. Then it follows that for each set of vertices
$X$ of size at most $\frac{|A|}{r^{3/2}}$, Maker's graph has $|N(X)|
> ri = r|X|$, as otherwise, we can find an index set $J \subset
[10ri]$ of size $|J| = 9ri$ such that there are no edges which have
one endpoint in $X$ and the other endpoint in $V_{10ri, J}$. Thus in
such a situation, Maker's graph will satisfy Condition (i) of
Definition \ref{def:realexpander}.

The edge set of $H_2$ is constructed as follows: for each pair of
sets of vertices $X,Y \subset V(G)$ of sizes $|X| =
\frac{|A|}{r^{3/4}}$ and $|Y| = \frac{|A|}{4}$, place a hyperedge
consisting of the edges of $G$ that have one endpoint in $X$, and
the other endpoint in $Y \setminus X$. By \eqref{eq:lem_game_case2},
the size of a hyperedge constructed this way is at least
\[ |X|\cdot (|Y|-|X|) - 11\alpha n^2
\ge \frac{|A|}{2^{15}} \cdot \left(\frac{|A|}{4} -
\frac{|A|}{2^{15}} \right) - 11\alpha n^2 \ge \frac{|A|^2}{2^{20}},
\] and the number of such hyperedges is at most $2^{2n}$. Moreover,
if Maker can claim at least one edge in each of the hyperedges of
$H_2$, then Maker's graph will satisfy Condition (ii) of Definition
\ref{def:realexpander}.

Let $H = H_1 \cup H_2$ and consider a $(b_0:1)$ Maker-Breaker game
played on the hypergraph $H$ (where we name the players as NewMaker
and NewBreaker in order to distinguish the players from our game).
By the observations above, it suffices to show that the auxiliary
game is NewBreaker's win in order to establish our lemma. This can
be done by using Theorem \ref{thm:beck} with $p=b_0$ and $q=1$,
given that $c$ is small enough. We omit the detailed computation.
\end{proof}

\begin{lem}  \label{lem_game_case2_lem2}
For every fixed positive real $\beta$, there exists a constant $c =
c(\beta)$ such that the following holds for every positive $b \le
\frac{cn}{\log n}$. In a $(1:b)$ Maker-Breaker game played on the
board $G[A]$, suppose that Maker constructed a graph with property
$\mathcal{RE}(\frac{1}{6})$ in the first $\beta n \log n$ rounds.
Then he can construct a graph that complements it in the
remaining rounds (similar for $G[\overline{A}]$).
\end{lem}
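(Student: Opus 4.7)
The proof will closely mirror that of Lemma \ref{lem_game_case1_lem2}, adapted to the situation where the game is played inside $G[A]$ (respectively $G[\overline{A}]$) instead of on the whole Dirac graph. The plan is to let $c = c(\beta)$ be a sufficiently small positive constant, and to show that after the first $\beta n\log n$ rounds Maker still has access to $\Omega(n^2)$ edges of $G[A]$ that could be used to extend any given proper path, so that Beck's criterion (Theorem \ref{thm:beck}) applied to an appropriate auxiliary game yields a strategy.

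First I would bound the number of edges claimed in the first $\beta n\log n$ rounds. Breaker claims at most $b \cdot \beta n\log n \le c\beta n^2$ edges, and Maker has also claimed at most $\beta n\log n$ edges, so the total number of edges used is at most $2c\beta n^2$, which is negligible provided $c$ is chosen small enough with respect to $\alpha = 1/2^{40}$. Next, fix any path $P$ with a designated edge $e$ such that no longer path containing $e$ exists in $M_1\cup P$, where $M_1$ is Maker's graph with property $\mathcal{RE}(\tfrac{1}{6})$. Definition \ref{def:rotation} provides a set $S_P \subseteq V(P) \subseteq A$ of size at least $n/6$, and for each $v\in S_P$ a set $T_v\subseteq V(P)\subseteq A$ of size at least $n/6$, such that claiming any edge $\{v,w\}$ with $v \in S_P$ and $w \in T_v$ would allow Maker to complete an extension.

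The main observation is that the number of such extension pairs which are actually edges of $G[A]$ is linear in $n^2$. Indeed, using \eqref{eq:lem_game_case2} the total number of non-edges of $G$ inside $A$ is at most $11\alpha n^2$, and hence the number of ordered pairs $(v,w)$ with $v\in S_P$, $w\in T_v$ and $\{v,w\}\notin E(G)$ is at most $22\alpha n^2$. Therefore the number of edges of $G[A]$ useful for extending $P$ is at least
\[
\frac{1}{2}\Bigl( |S_P|\cdot \tfrac{n}{6} - 22\alpha n^2 \Bigr) \ge \frac{1}{2}\Bigl( \tfrac{n^2}{36} - 22\alpha n^2 \Bigr) \ge \frac{n^2}{80},
\]
and after subtracting the $\le 2c\beta n^2$ edges already claimed we still have at least $n^2/100$ available extension edges for each such path $P$, provided $c$ is small enough.

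Finally I would set up the auxiliary Maker--Breaker game on the board of edges of $G[A]$ not yet claimed, in which the winning sets are, for each non-extendable proper path $P$ with fixed edge $e$, the set of at least $n^2/100$ extension edges identified above. The number of winning sets is at most $n^2\cdot n!$, and Maker of the original game will play Breaker in this auxiliary game. Applying Theorem \ref{thm:beck} with $p = b \le cn/\log n$ and $q=1$, it suffices to check that
\[
n^2\cdot n! \cdot 2^{-(n^2/100)/(cn/\log n)} \le e^{O(n\log n)}\cdot 2^{-\Omega(n\log n/c)} < \tfrac{1}{2},
\]
which holds for $c$ small enough depending on $\beta$. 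Thus Maker, playing Breaker in the auxiliary game, can guarantee to claim some extension edge for every non-extendable proper path $P$, and the graph $M_2$ of edges claimed after round $\beta n\log n$ complements $M_1$. The bulk of the verification is routine; the only technical point to be careful about is that $T_v$ depends on $v$, so the $\Omega(n^2)$ lower bound on extension edges has to be derived directly from the global edge-count bound \eqref{eq:lem_game_case2} rather than from a minimum degree argument, since inside $G[A]$ the minimum degree is only $n/5$ and is too small to force each individual vertex $v\in S_P$ to have many neighbors in $T_v$. The same argument applied with $e(\overline{A}) \ge \binom{|\overline{A}|}{2} - 6\alpha n^2$ handles $G[\overline{A}]$.
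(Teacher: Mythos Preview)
Your approach is essentially the same as the paper's: bound the total number of claimed edges by $2c\beta n^2$, use \eqref{eq:lem_game_case2} to get $\Omega(n^2)$ unclaimed extension edges for every non-extendable path, and apply Beck's criterion to the auxiliary game. One small slip: since the game is on $G[A]$, property $\mathcal{RE}(\tfrac{1}{6})$ guarantees $|S_P|,|T_v|\ge |A|/6$, not $n/6$ (the ``$n$'' in Definition~\ref{def:rotation} refers to the order of the graph under consideration); as $|A|\ge n/2$ this only changes the constants and your computation still goes through with, say, $n^2/300$ in place of $n^2/80$. Also, ``proper path'' is terminology from the bipartite setting; here you just mean a path with a fixed edge.
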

\begin{proof}
We will only prove the statement for $G[A]$, since the statement for
$G[\overline{A}]$ can be proved similarly. Let $c \le \alpha
\beta^{-1}$ be a small enough constant. Let $M_1$ be the Maker's
graph constructed in the first $\beta n \log n$ rounds. Note that
the number of edges that have been claimed in the first $\beta n
\log n$ rounds is at most $c \beta n^2 + \beta n \log n \le 2c\beta
n^2$. Let $G'$ be the graph of the edges that have not been
claimed by Breaker so far. Let $P$ be a path over a subset of
vertices of $A$, with a fixed edge $e$ such that there is no path
longer than $P$ containing $e$ in the graph $P \cup M_1$. Then there
exists a set $S_P \subset P$ of size $|S_P| \ge \frac{|A|}{6}$ such
that for every $v \in S_P$ there exists a set $T_v$ of size $|T_v|
\ge \frac{|A|}{6}$ such that for all $w \in T_v$, there exists a
path of the same length as $P$ containing $e$, starting at $v$ and
ending at $w$. By \eqref{eq:lem_game_case2}, we have \[ e_{G'}(S_P,
T_v) \ge |S_P|\cdot |T_v| - 11\alpha n^2 - 2c\beta n^2 \ge
\frac{|A|^2}{36} - 13 \alpha n^2 \ge \frac{n^2}{200}.
\] Using this estimate, we can proceed as in Lemma
\ref{lem_game_case1_lem2} to finish the proof. We omit the details.
\end{proof}

By using the two lemmas above, we can show that Maker can win the Hamiltonicity
game in this case as well.

\begin{thm}
There exists a constant $c$ such that the following holds for every
positive $b \le \frac{cn}{\log n}$. If $G$ is a Dirac graph
satisfying $(ii)$ of Lemma \ref{lem:dirac}, then Maker has a winning
strategy for the $(1:b)$-Maker-Breaker Hamiltonicity game.
\end{thm}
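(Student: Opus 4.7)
The plan is to mirror the structure of the analogous random-case theorem from the previous subsection. First, I would split the board $E(G)$ into three parts: $V_1 = E(G[A])$, $V_2 = E(G[\overline{A}])$, and $V_3 = E_G(A,\overline{A})$, and have Maker cycle through them. By the board-splitting trick described at the start of Section~5, it suffices to show that Maker can win a $(1{:}3b)$ game separately on each board; I therefore take the constant $c$ small enough that $3c$ falls below the thresholds given by Lemmas~\ref{lem_game_case2_lem1} and \ref{lem_game_case2_lem2}.

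On $V_1$ and $V_2$ I apply Lemmas~\ref{lem_game_case2_lem1} and \ref{lem_game_case2_lem2} (with $3b$ in place of $b$): Maker constructs in the first $\beta n \log n$ rounds a subgraph of $G[A]$ (respectively $G[\overline{A}]$) with property $\mathcal{RE}(\tfrac{1}{6})$, and in the remaining rounds he adds a complementing subgraph. Proposition~\ref{prop:rotationextension} then gives that Maker's graph on each of $G[A]$ and $G[\overline{A}]$ is Hamilton connected. It remains to have Maker claim two vertex-disjoint cross edges $\{u_1,v_1\},\{u_2,v_2\}$ on $V_3$ (with $u_i \in A$ and $v_i\in\overline{A}$): together with Hamilton connectivity on each side, the Hamilton path from $u_1$ to $u_2$ in $A$, the edge $\{u_2,v_2\}$, the Hamilton path from $v_2$ to $v_1$ in $\overline{A}$, and the edge $\{v_1,u_1\}$ close into a Hamilton cycle of $G$.

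The main obstacle will be the $V_3$ step, which replaces the probabilistic argument of the random case by a deterministic strategy. Write $\mathcal{B}$ for the cross bipartite graph and $k = |A|-|\overline{A}|$; every $v \in \overline{A}$ has at least $\lceil k/2\rceil+1 \ge 1$ cross neighbors by the Dirac condition, so $|E(\mathcal{B})| \ge |\overline{A}| \ge (\tfrac{1}{2}-16\alpha)n$, and by the same argument used at the end of the random case (ii) proof, no single vertex can dominate all cross edges, so $\mathcal{B}$ has matching number at least $2$. Maker's strategy on $V_3$ is to pick a first cross edge $e_1 = \{u_1,v_1\}$ with $\deg_{\mathcal{B}}(u_1) + \deg_{\mathcal{B}}(v_1)$ small enough that more than $3b$ cross edges are vertex-disjoint from $e_1$, and then to pick any unclaimed disjoint edge as $e_2$. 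The existence of a good $e_1$ is handled by a brief case analysis: if the matching number of $\mathcal{B}$ is at least $3b+2$ then Maker takes $e_1, e_2$ from a matching directly; otherwise $\mathcal{B}$ admits a vertex cover of size less than $3b+2$, and the Dirac minimum-degree bound then forces $\mathcal{B}$ into a bipartite double-star configuration with two high-degree dominating vertices $u^*, v^*$, whose two edge classes $\{u^*\}\times(N_{\mathcal{B}}(u^*)\setminus\{v^*\})$ and $(N_{\mathcal{B}}(v^*)\setminus\{u^*\})\times\{v^*\}$ each contain $\Omega(n)$ edges, leaving Maker ample room against Breaker's $O(n/\log n)$ budget.
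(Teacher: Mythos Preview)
Your overall structure is exactly the paper's: split into $E(G[A])$, $E(G[\overline{A}])$, and the cross edges, apply Lemmas~\ref{lem_game_case2_lem1} and~\ref{lem_game_case2_lem2} together with Proposition~\ref{prop:rotationextension} to obtain Hamilton connectivity on each side, then secure two vertex-disjoint cross edges. The gap is in this last step. Your claim that a small vertex cover of $\mathcal{B}$ forces a double-star is false. Take $|A|=|\overline{A}|=n/2$, pick $C_1\subset\overline{A}$ and $C_2\subset A$ each of size roughly $\tfrac{cn}{2\log n}$, and let $\mathcal{B}$ be a disjoint union of stars centred at $C_1\cup C_2$, with every vertex of $A\setminus C_2$ a leaf of exactly one star centred in $C_1$ and symmetrically for $\overline{A}\setminus C_1$; padding $G[A]$ and $G[\overline{A}]$ to cliques yields a Dirac graph in case~(ii). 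Here the matching number equals $|C_1|+|C_2|<3b+2$, every vertex has $\mathcal{B}$-degree at least $1$, yet the maximum $\mathcal{B}$-degree is only $\Theta(\log n)$, so no $\Omega(n)$-degree vertex exists and Breaker's $\Theta(n/\log n)$ budget can wipe out any individual star. (Separately, your large-matching branch needs threshold $6b+2$ rather than $3b+2$, since in the split game Breaker moves both before and after Maker's first claim.)

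The paper's cross-board argument uses a different case split. When $|A|=|\overline{A}|=n/2$: if no vertex of $A$ has $\mathcal{B}$-degree at least $n/3$, Maker claims an arbitrary edge $\{v,w\}$; since every vertex of $\overline{A}$ has cross-degree at least $1$, at least $n/2-1-O(b)$ vertices of $\overline{A}\setminus\{w\}$ still carry an unclaimed incident edge, and fewer than $\deg_{\mathcal{B}}(v)<n/3$ of them can be forced through $v$, so an edge disjoint from $\{v,w\}$ survives. The symmetric argument handles the case where no vertex of $\overline{A}$ has degree $\ge n/3$, and if both sides contain a vertex of degree $\ge n/3$ Maker simply plays one edge at each. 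When $|A|>n/2$, every vertex of $\overline{A}$ has cross-degree at least $2$, so after any first edge $\{v,w\}$ there remain at least $|\overline{A}|-1\ge n/3$ cross edges avoiding both $v$ and $w$, far beyond Breaker's reach.
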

\begin{proof}
Let $\beta = \beta_{\ref{lem_game_case2_lem1}}$, and $c =
\frac{1}{3}\min\{ c_{\ref{lem_game_case2_lem1}},
c_{\ref{lem_game_case2_lem2}}(\beta)\}$. We split the board into
three boards, $G[A], G[\overline{A}]$, and the subgraph induced by
the edges between $A$ and $\overline{A}$ (call the last one the
bipartite board $\mathcal{B}$). To Maker's disadvantage, we will
separately play a $(1:3b)$ game on each board, and show that Maker
can play the three games so that in the end, the union of Maker's
graphs in the three boards is Hamiltonian. By Lemma
\ref{lem_game_case2_lem1}, we know that Maker can construct
subgraphs of $G[A]$ and $G[\overline{A}]$ which have property
$\mathcal{RE}(\frac{1}{6})$ in the first $\beta n\log n$ rounds of
each board. Then by Lemma \ref{lem_game_case2_lem2}, Maker can
construct graphs that complement these subgraphs in the remaining
rounds of each board. Therefore, by Proposition
\ref{prop:rotationextension}, Maker can construct subgraphs of
$G[A]$ and $G[\overline{A}]$ which are Hamiltonian connected. Thus
it suffices to show that Maker can claim two vertex disjoint edges
in the bipartite board, since together with the Hamilton
connectivity of Maker's graph in $G[A]$ and $G[\overline{A}]$, this
will imply Hamiltonicity of Maker's graph.

Consider the bipartite board $\mathcal{B}$. We will consider two
cases depending on the sizes of $A$ and $\overline{A}$. First assume
that $|A| = |\overline{A}| = \frac{n}{2}$. Then since the minimum
degree of $G$ is at least $\frac{n}{2}$, the graph $\mathcal{B}$ has
minimum degree at least 1. If there is no vertex in $A$ of degree at
least $\frac{n}{3}$ in $\mathcal{B}$, then Maker starts by claiming
an arbitrary edge $\{v,w\}$ of $\mathcal{B}$ such that $v \in A, w
\in \overline{A}$. Breaker can then claim at most $\frac{2cn}{\log
n}$ other edges before Maker's next move since Breaker might have
been the first player. Afterwards, since $\mathcal{B}$ has minimum
degree at least 1, we can see that there exists at least
$\frac{n}{2} - 1 - \frac{2cn}{\log n}$ vertices other than $w$ in
$\overline{A}$ which have at least 1 non-claimed edge incident to
it. Among them, at most $\frac{n}{3}$ can be incident to $v$.
Therefore Maker can claim an edge $\{v',w'\}$ such that $v \neq v'$
and $w \neq w'$. Similarly, we can take care of the case when
$\overline{A}$ has no vertex of degree at least $\frac{n}{3}$. Thus
we may assume that there exist vertices $v_0 \in A$  and $w_0 \in
\overline{A}$ such that $v_0$ and $w_0$ have degree at least
$\frac{n}{3}$. In this case, Maker in the first round claims an edge
incident to $v_0$ which is not $\{v_0, w_0\}$, and in the second
round claims an edge incident to $w_0$ which is not $\{v_0, w_0\}$
(this can be done since Breaker cannot claim all the edges incident to
$w_0$ in two rounds). Thus Maker can claim two vertex-disjoint edges
in this case.

Second, assume that $|A| = \frac{n}{2} + t$ for $t>0$. Then all the
vertices of $\overline{A}$ have degree at least $\lceil t  + 1
\rceil \ge 2$ in $\mathcal{B}$. Maker starts by claiming an
arbitrary edge $\{v,w\}$. Note that since all the vertices in
$\overline{A}$ have degree at least 2, there are at least  \[
|\overline{A}|-1 \ge \Big(\frac{1}{2} - 16\alpha \Big)n - 1 \ge
\frac{n}{3} \] edges remaining which are not incident to $\{v,w\}$.
Breaker cannot claim all of these edges in two rounds, and thus
Maker can claim one such edge in the next round to achieve his/her
goal. This concludes the proof.
\end{proof}

\subsection{Third case}

We assume that there exists a set $A$ of size $\frac{n}{2}
\le|A|\le(\frac{1}{2} +16\alpha)n$ such that the bipartite graph
induced by the edges between $A$ and $\overline{A}$ has at least
$(\frac{1}{4}-14\alpha)n^{2}$ edges and minimum degree at least
$\frac{n}{192}$. Moreover, either $|A|=\lceil \frac{n}{2}\rceil$, or
the induced subgraph $G[A]$ has maximum degree at most
$\frac{n}{96}$. Let $V_1 = A$ and $V_2 = \overline{A}$. Let
$k=|V_1|-|V_2|$ so that $|V_1| = \frac{n+k}{2}$, $|V_2| =
\frac{n-k}{2}$, and $k \le 32\alpha n$.

\begin{lem} \label{lem_game_case3_lem1}
There exist positive reals $c$ and $\beta$ such that the following
holds for every $b \le \frac{cn}{\log n}$. In a $(1:b)$
Maker-Breaker game played on $G$, Maker can construct a graph with
property $\BIPRE(\frac{1}{6})$ in the first $\beta n \log n$ rounds.
\end{lem}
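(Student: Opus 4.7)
The structure mirrors that of Lemma~\ref{lem_case3_lem1}: by Lemma~\ref{lem:bipexpander_rotationextension}, it suffices for Maker to end up with a $k$-bipartite-expander in $G$ with parameters $\varepsilon=\tfrac{1}{4}$ and some absolute constant $r$. As in Lemma~\ref{lem_game_case1_lem1}, I fix $b_0 = cn/\log n$ for $c$ small, set $\beta = 1/(2c)$, let Maker pretend he is playing a $(1:b_0)$ game by inserting fake Breaker moves, and reduce everything to an auxiliary $(b_0:1)$ Maker-Breaker game on the board $E(G)$ in which the original Maker plays the role of NewBreaker. The conclusion then reduces to applying Beck's criterion (Theorem~\ref{thm:beck}) to the hypergraph whose edges are described next.

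The hypergraph is a union of five families. First, $H_0$ contains, for each $Z \subset V_1$ with $|Z|=2k-2$, the set of edges of $G[V_1]$ not incident to $Z$. If Maker hits every $H_0$-hyperedge, then his edges inside $V_1$ have vertex-cover number at least $2k-1$, hence matching number at least $k$ (using $\tau \le 2\nu$ in any graph). After the game, Maker picks any such matching as $S_E$ and declares one endpoint of each of its edges as a special vertex to produce the frame $(V_1, V_2, S_V, S_E)$. Families $H_1$ and $H_2$ are the small-set and large-set expansion families from $V_1$ into $V_2$ (of the form ``for each $X$ of size $i\le n/r^{3/2}$ and each suitable slicing of $V_2$, all $G$-edges between them'' and ``for each pair $X\subset V_1$, $Y\subset V_2$ of appropriate sizes, all $G$-edges between them''), directly modelled on the $H_1, H_2$ of Lemma~\ref{lem_game_case1_lem1}. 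Families $H_3$ and $H_4$ are the symmetric expansion families from $V_2$ into $V_1$, but with a $2k$-slack: the small-set family demands $|N(Y)\cap V_1| \ge r|Y|+2k$, and the large-set family demands $|V_1 \setminus N(Y)| \le \varepsilon|V_1| - 2k$. This slack guarantees that the expansion requirement into $V_1''=V_1 \setminus V(S_E)$ of Definition~\ref{def:bip-expander} holds no matter which matching Maker chooses as $S_E$.

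Hyperedge sizes are controlled by the case (iii) hypotheses. Each $H_0$-hyperedge has size $\Omega(nk)$: $G[V_1]$ has minimum degree at least $k/2$ (forced by $\delta(G)\ge n/2$ and $|V_2|=(n-k)/2$), so $e(G[V_1])\ge nk/8$, and since $G[V_1]$ has maximum degree at most $n/96$ when $k\ge 2$, deleting $2k-2$ vertices destroys only $O(nk/96)$ edges. Families $H_1$ through $H_4$ have hyperedge sizes $\Omega(n|X|)$ or $\Omega(n^2)$ respectively, which follow from the bound $e(V_1, V_2) \ge (\tfrac{1}{4} - 14\alpha)n^2$ exactly as in the corresponding calculation of Lemma~\ref{lem_game_case1_lem1}. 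Plugging these sizes and the standard counts ($|H_0|\le 2^n$; $|H_1|\le \sum_i \binom{n}{i}\binom{O(ri)}{O(ri)}$; $|H_2|,|H_3|,|H_4|\le 4^n$) into Theorem~\ref{thm:beck} with $p=b_0$ and $q=1$ gives
\[ \sum_{e} 2^{-|e|/b_0} < \tfrac{1}{2} \]
provided $c$ is chosen sufficiently small depending on $\alpha$ and $r$. Hence NewBreaker, i.e.~the original Maker, wins the auxiliary game and constructs the desired bipartite-expander in at most $\beta n \log n$ rounds.

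The main technical point is the mild circular dependence between Maker's choice of $S_E$ (which determines $V_1''$) and the $V_1''$-expansion required by Definition~\ref{def:bip-expander}; this is handled by the uniform $2k$-slack in $H_3$ and $H_4$, which decouples the expansion guarantee from the choice of $S_E$. The corner case $|A|=\lceil n/2\rceil$ (so $k\le 1$) is trivial: $H_0$ is either vacuous or simply demands one edge of $G[V_1]$, which Maker claims in his first move.
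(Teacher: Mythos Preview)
Your overall strategy is sound, and families $H_1$, $H_2$, and $H_4$ work essentially as you describe. (A minor point: for $H_0$ you must use the count $\binom{|V_1|}{2k-2}$ rather than the stated bound $2^n$; the latter is far too crude when $k$ is small, since then the hyperedge size $\Omega(nk)$ is not large enough to beat $2^n$ in Beck's sum.) The genuine gap is in $H_3$, the small-set expansion from $V_2$ into $V_1$ with the $2k$-slack.

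The requirement you impose is that in Maker's final graph every singleton $\{y\}\subset V_2$ satisfies $|N(y)\cap V_1|\ge r+2k$. But Maker's graph has at most $\beta n\log n$ edges, so summing over $y\in V_2$ forces $|V_2|(r+2k)\le 2\beta n\log n$, i.e.\ $k=O(\log n)$. In case~(iii) of Lemma~\ref{lem:dirac}, however, we only know $k\le 32\alpha n$, and $k$ may genuinely be of order $\alpha n$. In that regime no strategy whatsoever can give Maker minimum bipartite degree $\ge 2k$ in $O(n\log n)$ moves, so the auxiliary game you set up for $H_3$ is simply not NewBreaker's win: the Beck sum cannot be made small because the target is infeasible. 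The same obstruction rules out any deterministic-slack workaround --- decoupling the frame from the expansion by a uniform margin of $2k$ is too expensive at the level of single vertices.

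The paper's proof supplies the missing idea. Maker claims \emph{twice} as many vertex-disjoint edges in $V_1$ as needed (namely $2k$, greedily), and on the bipartite board achieves expansion into all of $V_1$ by a \emph{growing} factor $s=C\log n$ rather than a constant $r$. After the game, Maker selects a \emph{random} $k$-subset of the $2k$ candidate edges as $S_E$. For each small $Y$, the set $N(Y)\cap V_1$ has size at least $s|Y|\ge C\log n$, so by concentration of the hypergeometric distribution the random $V(S_E)$ swallows at most a $\tfrac{2}{3}$-fraction of it with probability $1-n^{-\Omega(C)}$; a union bound over all small $Y$ then shows some choice of $S_E$ works. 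The random selection is precisely what replaces the impossible per-vertex worst-case margin $2k$ by a constant-fraction average-case loss, and the logarithmic expansion factor is what makes the concentration and the union bound go through.
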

\begin{proof}
Let $\beta$ be a large enough positive constant. Let $\varepsilon =
\min\{\frac{1}{150 \beta}, \frac{1}{240}\}$ and $c \le
\frac{\varepsilon}{2\beta}$ be small enough. We start by splitting
the board into two boards $G[V_1]$, and the bipartite graph induced
by the edges between $V_1$ and $V_2$ (which we refer to as the {\em
bipartite board} $\mathcal{B}$). We first show that on the board
$G[V_1]$, Maker can claim at least $2k$ vertex disjoint edges. We
only need to consider the cases when $k \ge 1$ since otherwise the
claim is trivial. Suppose that Maker has claimed $t$ vertex disjoint
edges after $t$ rounds for some $t < 2k$. Note that since $|V_2| =
\frac{n-k}{2}$, the induced subgraph $G[V_1]$ has minimum degree at
least $\frac{k}{2}$. Since the graph $G[V_1]$ has maximum degree at
most $\frac{n}{96}$, the number of non-claimed edges not incident to
any of the $t$ edges that Maker has claimed so far is at least
\begin{align} \label{eq:freespecialedge}
\frac{1}{2} \cdot \frac{k}{2} \cdot \frac{n}{2} - 2t \cdot \frac{n}{96} - (t+1) \cdot \frac{cn}{\log n} \ge \frac{nk}{24}.
\end{align}
Maker chooses an arbitrary edge out of these edges. In the end,
Maker can claim at least $2k$ vertex-disjoint edges in the board $G[V_1]$.

On the board $\mathcal{B}$, one can show that there exists a
constant $C$ which goes to infinity as $c$ goes to zero, such that
for $s= C\log n$, Maker can construct a graph satisfying the
following:
\begin{enumerate}
  \setlength{\itemsep}{1pt} \setlength{\parskip}{0pt}
  \setlength{\parsep}{0pt}
\item For all $X\subset V_{1}$,
\begin{enumerate}
\item if $|X|\le \frac{n}{s^{3/2}}$, then $|N(X)\cap V_{2}|\ge s|X|$,
\item if $\frac{n}{s^{3/4}} \le |X| \le \frac{n}{2^{30}}$, then $|N(X)\cap V_{2}|\ge
\frac{n}{200}$,
\item if $\frac{n}{2^{30}} \le |X|$, then $|N(X)\cap V_{2}|\ge \frac{7}{8}|V_2|$.
\end{enumerate}
\item For all $Y\subset V_{2}$,
\begin{enumerate}
\item if $|Y|\le \frac{n}{s^{3/2}}$, then $|N(Y)\cap V_{1}|\ge s|Y|$,
\item if $\frac{n}{s^{3/4}} \le |Y| \le \frac{n}{2^{30}}$, then $|N(Y)\cap V_{1}|\ge
\frac{n}{200}$,
\item if $\frac{n}{2^{30}} \le |Y|$, then $|N(Y)\cap V_{1}|\ge \frac{7}{8}|V_1|$.
\end{enumerate}
\end{enumerate}
To prove $1(a),1(b),2(a),2(b)$, we can use the fact that the
bipartite board has minimum degree at least $\frac{n}{192}$, and to
prove $1(c),2(c)$, we can use the fact that $e(A, \overline{A}) \ge
(\frac{1}{4}-14\alpha)n^2$. We omit the details which are similar to
those of Lemmas \ref{lem_game_case1_lem1} and
\ref{lem_game_case2_lem1}.

It then suffices to show that one can carefully choose $k$ of the
edges within $G[V_1]$ as the special edges so that we can find a
special frame, with respect to which Maker's graph is a
bipartite-expander with certain parameters. Recall that Maker
claimed at least $2k$ vertex-disjoint edges on the board $G[V_1]$.
Call these {\em candidate edges}. Uniformly at random choose $k$
edges among the candidate edges, and declare them as our special
edges $S_E$. For each such edge, declare one of its vertices as a
special vertex (let $S_V$ be the set of special vertices). This
forms a special frame $(V_1, V_2, S_V, S_E)$ of Maker's graph. We
now verify that Maker's graph is a bipartite-expander with
parameters $\epsilon=\frac{1}{4}$ and $r=2^{20}$ with respect to this frame.

Condition (i) of Definition~\ref{def:bip-expander} easily follows
from 1(a), 1(b), and 1(c) given above. Let $V_1''$ be as in
Definition \ref{def:almostbipartite}. Fix a set $Y \subset V_2$ of
size at most $\frac{n}{s^{3/2}}$, and note that $|N(Y) \cap V_1| \ge
s|Y|$. We can find a subset $N_Y$ of $N(Y) \cap V_1$ of size at
least $\frac{s}{2}|Y|$ such that each candidate edge intersects
$N_Y$ in at most one vertex. Note that if $|N_Y \cap V(S_E)| < \frac{s}{3}|Y|$,
then $|N(Y)\cap V_1''|\ge |N_Y \setminus V(S_E)| \ge \frac{s}{6}|Y|$.
Since we randomly chose the special
edges and the probability that each edge is chosen is at most
$\frac{1}{2}$, the probability that $|N_Y \cap V(S_E)| \ge
\frac{s}{3}|Y|$ is, by the concentration of hypergeometric
distribution, at most $e^{-\Omega(s|Y|)}$. By taking the union bound
over all possible choices of $Y$, we see that the probability that
there exists a set $Y$ for which $|N_Y \cap V(S_E)|\ge \frac{s}{3}|Y|$ 
is (for small enough $c$, and thus large enough $C$) at most
\begin{align*}
\sum_{i=1}^{n/s^{3/2}} {n \choose i} e^{-\Omega(is)}
  \le \sum_{i=1}^{n/s^{3/2}} n^{i} n^{-\Omega(Ci)}
  \le \sum_{i=1}^{n/s^{3/2}} n^{-i} = o(1).
\end{align*}
Consequently, we can choose $S_V$ and $S_E$ so that $|N(Y) \cap V_1''| \ge \frac{s}{6}|Y|$
for all $Y$.
Consider a set $Y \subset V_2$  of size between $\frac{n}{s^{3/2}}$
and $\frac{n}{s^{3/4}}$. For an arbitrary subset $Y'\subset Y$ of size $\frac{n}{s^{3/2}}$,
as we explained above,
$$|N(Y)\cap V_1''| \geq |N(Y')\cap V_1''| \geq  \frac{s}{6}|Y'|=\frac{n}{6s^{1/2}} \geq
\frac{s^{1/4}}{6}|Y|>2^{20}|Y|.$$
In the case $Y \subset V_2$ is of size between $\frac{n}{s^{3/4}}$
and $\frac{n}{2^{30}}$, we use the fact $|N(Y) \cap V_1| \ge
\frac{n}{200}$. In this case, using that $\alpha=2^{-40}$, we have that
$$|N(Y) \cap V''_1|\geq |N(Y) \cap V_1|-2k\ge \frac{n}{200}-64\alpha n \geq  \frac{n}{300} \geq
2^{20}|Y|.$$
Therefore the first part of Condition (ii) of
Definition~\ref{def:bip-expander} holds. To establish the second
part, let $Y \subset V_2$ be a set of size at least
$\frac{n}{2^{15}}$. For an arbitrary choice of $S_E$, we have
\[ |N(Y) \cap V_1''| \ge |N(Y) \cap V_1| - 2k \ge \frac{7}{8}|V_1| - 64\alpha n \ge \frac{3}{4}|V_1|,  \]
and this gives the second part of Condition (ii) of
Definition~\ref{def:bip-expander} as well. Thus Maker's graph is a
bipartite-expander with parameters $\frac{1}{4}$ and $2^{20}$, and
by Lemma \ref{lem:bipexpander_rotationextension}, it follows that
Maker's graph has property $\BIPRE(\frac{1}{6})$.
\end{proof}

\begin{lem}  \label{lem_game_case3_lem2}
For every positive real $\beta$, there exists a positive constant $c
= c(\beta)$ such that the following holds for every $b \le
\frac{cn}{\log n}$. In a $(1:b)$ Maker-Breaker game played on $G$,
if Maker constructs a graph with property $\BIPRE(\frac{1}{6})$ in
the first $\beta n \log n$ rounds, then he can construct in the
remaining rounds a graph that complements it.
\end{lem}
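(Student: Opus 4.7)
The plan is to mirror the arguments in Lemmas \ref{lem_game_case1_lem2} and \ref{lem_game_case2_lem2}, adapting the edge-count lower bound to the bipartite structure of case (iii). Let $\beta$ be fixed and take $c = c(\beta)$ sufficiently small. After the first $\beta n \log n$ rounds, the total number of edges of $G$ claimed by either player is at most $(1+b)\beta n \log n \le 2c\beta n^2$, so writing $G'$ for the subgraph of $G$ whose edges are still unclaimed at that point, we have $e_{G'}(X,Y) \ge e_G(X,Y) - 2c\beta n^2$ for every pair of sets $X, Y$.

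Let $M_1 \in \BIPRE(\frac{1}{6})$ denote Maker's graph after the first phase, equipped with its matched special frame $(V_1, V_2, S_V, S_E, f)$. By Definition \ref{def:bip-extension}, in order to produce $M_2$ that complements $M_1$, it suffices that for every proper path $P$ for which $M_1 \cup P$ contains no longer proper path, Maker eventually claims at least one edge $\{v,w\}$ with $v \in S_P \subset V_2$ and $w \in T_v \subset V_1$, where the sets $S_P$ and $T_v$ of size at least $\tfrac{n}{6}$ are furnished by Definition \ref{def:bip-rotation}. Since $S_P$ and $T_v$ lie in opposite parts of the bipartition, case (iii) of Lemma \ref{lem:dirac} gives
\[
e_G(S_P, T_v) \;\ge\; |S_P|\cdot|T_v| - 14\alpha n^2 \;\ge\; \frac{n^2}{36} - 14\alpha n^2,
\]
and choosing $\alpha = 2^{-40}$ and $c$ sufficiently small makes $e_{G'}(S_P, T_v) \ge \frac{n^2}{40}$. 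Note that, unlike the non-bipartite case, there is no double-counting to worry about here: the roles of $v$ and $w$ cannot be swapped because they lie in different parts of the frame partition.

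We then set up an auxiliary Maker-Breaker game exactly as in Lemma \ref{lem_game_case1_lem2}: the board is $E(G')$, and for each proper path $P$ that cannot be extended in $M_1 \cup P$, we place a winning set consisting of the unclaimed edges $\{v,w\}$ with $v \in S_P$ and $w \in T_v$. Each winning set has size at least $\frac{n^2}{40}$, and the number of proper paths on $[n]$ is bounded by $n \cdot n!$. Applying Beck's criterion (Theorem \ref{thm:beck}) with $p = b \le \frac{cn}{\log n}$ and $q = 1$, the relevant sum is bounded by
\[
n \cdot n! \cdot 2^{-(n^2/40)\log n/(cn)} \;\le\; e^{2n\log n} \cdot 2^{-(n\log n)/(40c)},
\]
which is less than $\frac{1}{2}$ provided $c$ is chosen small enough depending on $\beta$. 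Consequently, NewBreaker (played by our Maker) wins the auxiliary game, which is precisely the required extension strategy, completing the proof.

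The main obstacle is purely bookkeeping rather than conceptual: once the bipartite edge bound $e_G(S_P, T_v) \ge |S_P||T_v| - 14\alpha n^2$ is in hand, the Beck-criterion argument is identical to that of the previous two extension lemmas. One minor point worth verifying is that restricting attention to \emph{proper} paths (as opposed to all paths) only reduces the family of winning sets, so the estimate on $\sum 2^{-|B|/p}$ is, if anything, easier to satisfy than in the earlier cases.
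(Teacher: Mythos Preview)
Your proposal is correct and follows essentially the same approach as the paper: you establish the same lower bound $e_{G'}(S_P,T_v)\ge \tfrac{n^2}{40}$ via $e_G(A,\overline{A})\ge |A||\overline{A}|-14\alpha n^2$ and then invoke Beck's criterion exactly as in Lemma~\ref{lem_game_case1_lem2}. The only cosmetic differences are that you count proper paths by $n\cdot n!$ rather than $n^2\cdot n!$ (no fixed edge is needed in the bipartite setting) and you explicitly note the absence of double-counting; neither affects the argument.
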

\begin{proof}
Let $c \le \alpha \beta^{-1}$ be a small enough constant. Let $M_1$
be the Maker's graph constructed in the first $\beta n \log n$
rounds. Note that the number of edges that have been claimed in the
first $\beta n \log n$ rounds is at most $c \beta n^2 + \beta n \log
n \le 2c\beta n^2$. Let $G'$ be the graph induced by the edges that
have not been claimed by Breaker so far. Let $P$ be a proper path
such that there is no path longer than $P$ in the graph $P \cup
M_1$. Then there exists a set $S_P \subset V_2$ of size $|S_P| \ge
\frac{n}{6}$ such that for every $v \in S_P$ there exists a set
$T_v \subset V_1$ of size $|T_v| \ge \frac{n}{6}$ such that for all $w \in
T_v$, there exists a path of the same length as $P$ starting at $v$
and ending at $w$. By the fact $e(A, \overline{A}) \ge (\frac{1}{4}
- 14\alpha)n^2 \ge |A|\cdot|\overline{A}| - 14\alpha n^2$, we have
\[ e_{G'}(S_P, T_v) \ge |S_P|\cdot |T_v| - 14\alpha n^2 - 2c\beta
n^2 \ge \frac{n^2}{36} - 16 \alpha n^2 \ge \frac{n^2}{40}.
\] Using this estimate, we can proceed as in Lemma
\ref{lem_game_case1_lem2} to finish the proof. We omit the details.
\end{proof}

\begin{thm}
There exists a constant $c$ such that
the following holds for every $b \le \frac{cn}{\log n}$.
If $G$ is a Dirac graph satisfying $(iii)$ of Lemma \ref{lem:dirac},
then Maker has a winning strategy for the $(1:b)$-Maker-Breaker Hamiltonicity game.
\end{thm}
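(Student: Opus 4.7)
The plan is to proceed exactly as in the proofs of the analogous theorems for cases (i) and (ii), simply combining the two preceding lemmas of this subsection with the bipartite rotation-extension framework developed in Section 3. First I would set $\beta = \beta_{\ref{lem_game_case3_lem1}}$ and choose $c = \min\{c_{\ref{lem_game_case3_lem1}}, c_{\ref{lem_game_case3_lem2}}(\beta)\}$, so that both Lemmas \ref{lem_game_case3_lem1} and \ref{lem_game_case3_lem2} apply throughout the entire game when the bias is at most $\frac{cn}{\log n}$. Unlike the second case, here there is no need to split the board into pieces: the bipartite rotation-extension framework already handles the asymmetry between $V_1 = A$ and $V_2 = \overline{A}$ via the special frame constructed in Lemma \ref{lem_game_case3_lem1}, which selects $k$ vertex-disjoint edges inside $V_1$ as the special edges.

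Next, in the first $\beta n \log n$ rounds Maker plays according to the strategy from Lemma \ref{lem_game_case3_lem1} and obtains a subgraph $M_1$ of $G$ possessing property $\BIPRE(\frac{1}{6})$. Then in all subsequent rounds Maker switches to the strategy from Lemma \ref{lem_game_case3_lem2}, which guarantees that his accumulated graph $M_2$ complements $M_1$ in the sense of Definition \ref{def:bip-extension}. By Proposition \ref{prop:bip-hamiltonian}, the union $M_1 \cup M_2$ is then Hamiltonian, so Maker's final graph contains a Hamilton cycle and he wins the $(1:b)$ game.

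There is no real obstacle at this final stage, since all the work has been carried out in the two lemmas: the expansion-type bipartite structure (and in particular the existence of the $k$ vertex-disjoint edges forming $S_E$) is built in Lemma \ref{lem_game_case3_lem1} via an Erd\H{o}s--Selfridge / Beck criterion application on an auxiliary hypergraph, and the path-closing edges required to extend any proper path are secured in Lemma \ref{lem_game_case3_lem2} using the edge-count bound $e(A,\overline{A}) \ge (\tfrac14 - 14\alpha)n^2$ from case (iii). What remains is essentially a bookkeeping step, and the only point to verify is consistency of the constants so that the two lemmas can be invoked sequentially within a single play of the game, which is handled by the choice of $\beta$ and $c$ above.
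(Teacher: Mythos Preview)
Your proposal is correct and matches the paper's proof essentially verbatim: set $\beta = \beta_{\ref{lem_game_case3_lem1}}$ and $c = \min\{c_{\ref{lem_game_case3_lem1}}, c_{\ref{lem_game_case3_lem2}}(\beta)\}$, invoke Lemma~\ref{lem_game_case3_lem1} for the first $\beta n\log n$ rounds to obtain a graph with property $\BIPRE(\frac{1}{6})$, then Lemma~\ref{lem_game_case3_lem2} for the remaining rounds to complement it, and conclude via Proposition~\ref{prop:bip-hamiltonian}. Your additional remarks (no board-splitting needed here, the special frame is built inside Lemma~\ref{lem_game_case3_lem1}) are accurate contextual observations that the paper's terse proof leaves implicit.
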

\begin{proof}
Let $\beta = \beta_{\ref{lem_game_case3_lem1}}$, and $c =
\min\{c_{\ref{lem_game_case3_lem1}},
c_{\ref{lem_game_case3_lem2}}(\beta)\}$. By Lemma
\ref{lem_game_case3_lem1}, Maker can construct a graph with property
$\BIPRE(\frac{1}{6})$ in the first $\beta n \log n$ rounds. Then by
Lemma \ref{lem_game_case3_lem2}, Maker can construct a graph which
complements it in the remaining rounds. Therefore by Proposition
\ref{prop:bip-hamiltonian}, Maker can construct a Hamilton cycle and
win the game.
\end{proof}

\section{Concluding Remarks}

As we mentioned in the introduction, several measures of robustness of graphs
with respect to various graph properties have already been considered before.
In this paper, we propose two new measures, and
strengthen Dirac's theorem according to these measures. Our first result
asserts that there exists a
constant $C$ such that for $p \ge \frac{C \log n}{n}$ and an
arbitrary Dirac graph $G$ on $n$ vertices, if one takes its edges independently at
random with probability $p$, then the resulting graph is
a.a.s.~Hamiltonian. Our second theorem
says that if one plays a $(1:b)$ Maker-Breaker game on a Dirac
graph, then the critical bias for Maker's win is of order of
magnitude $\frac{n}{\log n}$. We proved both of these theorems under
one general framework.

It is worth comparing our results with two previous robustness
results on Dirac graphs. Given a graph $G$, let $h(G)$ be the number
of Hamilton cycles in $G$. Cuckler and Kahn \cite{CuKa}, confirming
a conjecture of S\'ark\"ozy, Selkow, and Szemer\'edi \cite{SaSeSz},
proved that $h(G) \ge \frac{n!}{(2+o(1))^n}$ holds for every Dirac
graph $G$. Since the expected number of Hamilton cycles in the graph
$G_p$ is $h(G) p^n$, our first result which implies $h(G) p^n \ge 1$
for $p \ge \frac{C\log n}{n}$, recovers a slightly weaker
inequality $h(G) \ge \left(\frac{n}{C\log n}\right)^n$. Another
result of Lee and Sudakov \cite{LeSu} states that for $p \gg
\frac{\log n}{n}$, every subgraph of $G(n,p)$ of minimum degree at
least $\left(\frac{1}{2}+o(1)\right)np$ contains a Hamilton cycle.
Even though there is no direct implication between that result and
our result, they are nevertheless very closely related. Indeed,
the result in \cite{LeSu} is similar in spirit to a slightly weaker version of our
theorem, which says that for every fixed positive real $\varepsilon$
and every graph $G$ of minimum degree at least $\left(\frac{1}{2} +
\varepsilon\right)n$, the graph $G_p$ is a.a.s.~Hamiltonian, since
the resulting graph can be considered as a subgraph of $G(n,p)$ of
minimum degree at least
$\left(\frac{1}{2}+\frac{\varepsilon}{2}\right)np$.

We believe that two new measures of robustness that we proposed in this
paper, taking a random subgraph, and playing a Maker-Breaker game,
can be used to further study many other classical
Extremal Graph and Hypergraph Theory results as well.

\end{document}